\documentclass[12pt]{article}
\usepackage[utf8]{inputenc}
\usepackage[T1]{fontenc}
\usepackage{lmodern}
\usepackage[english]{babel}

\usepackage[a4paper,vmargin={2.5cm,2.5cm},hmargin={2.5cm,2.5cm}]{geometry}
\usepackage{mathpazo}
\usepackage[font={small,sf}, labelfont={sf,bf}, margin=1cm]{caption}
\captionsetup{width=1\textwidth}

\usepackage[pdftex]{hyperref}
\usepackage[expansion]{microtype}
\usepackage[pdftex]{color,graphicx}
\usepackage{amsmath,amsfonts,amssymb,amsthm,mathrsfs}
\usepackage{stackrel}

\theoremstyle{plain}
\newtheorem{step}{Step}
\newtheorem{coro}{Corollary}[section]
\newtheorem{theo}[coro]{Theorem}
\newtheorem{prop}[coro]{Proposition}
\newtheorem{conj}[coro]{Conjecture}
\newtheorem{lemm}[coro]{Lemma}

\newtheorem*{rema}{Remark}
\newtheorem*{CMPenv}{Cluster Merging Procedure (CMP)}
\newtheorem*{SEAenv}{Stabiliser exploration algorithm}

\newtheorem{defi}[coro]{Definition}

\newtheorem{mode}{Model}

\newtheorem*{ack}{Acknowledgments}

\newcommand{\defeq}{\overset{\hbox{\tiny{def}}}{=}}

\newcommand{\ind}{\mathbf{1}}
\renewcommand{\P}{\mathbf{P}}
\newcommand{\E}{\mathbf{E}}
\newcommand{\R}{\mathbb{R}}

\newcommand{\N}{\mathbb{N}}
\newcommand{\Z}{\mathbb{Z}}

\newcommand{\kA}{\mathcal{A}}
\newcommand{\kD}{\mathcal{D}}
\newcommand{\kC}{\mathcal{C}}
\newcommand{\kG}{\mathcal{G}}

\newcommand{\kP}{\mathcal{P}}

\newcommand{\kE}{\mathcal{E}}

\newcommand{\kS}{\mathcal{S}}

\newcommand{\lin}{\left[\kern-0.15em\left[}
\newcommand{\rin} {\right]\kern-0.15em\right]}
\newcommand{\linf}{[\kern-0.15em [}
\newcommand{\rinf} {]\kern-0.15em ]}
\newcommand{\ilin}{\left]\kern-0.15em\left]}
\newcommand{\irin} {\right[\kern-0.15em\right[}

\newcommand{\diam} {\text{\textup{diam}}}

\newcommand{\cmp}{\mathscr{C}}



\hypersetup{
    pdfauthor   = {L. Ménard, A. Singh},%
    pdftitle    = {Percolation by cumulative merging and phase transition for the contact process on random graphs.}}

\begin{document}

\title{\bf Percolation by cumulative merging and\\
phase transition for the contact process\\
on random graphs.}
\author{\textsc{Laurent Ménard}\footnote{Laboratoire Modal'X, Université Paris X, France. Partially supported by grant ANR-14-CE25-0014 (ANR GRAAL). email:
\href{mailto:laurent.menard@normalesup.org}{laurent.menard@normalesup.org}}
\, and  \textsc{Arvind Singh}\footnote{Laboratoire de Mathématiques, Université Paris XI, France. email:
\href{mailto:arvind.singh@math.u-psud.fr}{arvind.singh@math.u-psud.fr}}
}
\date{}
\maketitle


\vfill

\begin{abstract}
Given a weighted graph, we introduce a partition of its vertex set such that the distance between any two clusters is bounded from below by a power of the minimum weight of both clusters. This partition is obtained by recursively merging smaller clusters and cumulating their weights. For several classical random weighted graphs, we show that there exists a phase transition regarding the existence of an infinite cluster.

The motivation for introducing this partition arises from a connection with the contact process as it roughly describes the geometry of the sets where the process survives for a long time. We give a sufficient condition on a graph to ensure that the contact process has a non trivial phase transition in terms of the existence of an infinite cluster. As an application, we prove that the contact process admits a sub-critical phase on $d$-dimensional random geometric graphs and on random Delaunay triangulations. To the best of our knowledge, these are the first examples of graphs with unbounded degrees where the critical parameter is shown to be strictly positive.
\end{abstract}

\vfill

\noindent{\bf {\textsc MSC 2010 Classification}:} 82C22; 05C80; 60K35.\\
\noindent{\bf Keywords:} Cumulative merging, Interacting particle system; Contact process; Random graphs; Percolation; Multiscale analysis.

\vfill


\newpage

\tableofcontents

\section{Introduction}

The initial motivation of this work is the study of the contact process on an infinite graph with unbounded degrees. The contact process is a classical model of interacting particle system  introduced by Harris in \cite{Harris}. It is commonly seen as a model for the spread of an infection inside a network. Roughly speaking, given a graph $G$ with vertex set $V$, the contact process on $G$ is a continuous time Markov process taking value in $\{0,1\}^V$ (sites having value $1$ at a given time are said to be \emph{infected}) and with the following dynamics:
\begin{itemize}
\item Each infected site heals at rate $1$.
\item Each healthy site becomes infected at rate $\lambda N$ where $\lambda >0$ is the infection parameter of the model and $N$ is the number of infected neighbours.
\end{itemize}
We give a rigorous definition of the contact process in Section \ref{sec:connectioncontact} and refer the reader to the books of Liggett \cite{Li1,Li2} for a comprehensive survey on interacting particle systems, including the contact process. Durett's book \cite{D} also provides a nice survey on these models in the setting of random graphs. An important feature of the model is the existence of a critical infection rate $\lambda_c$ such that the process starting from a finite number of infected sites dies out almost surely when $\lambda < \lambda_c$ but has a positive probability to survive for all times as soon as $\lambda > \lambda_c$.

\medskip

It is a general result that, on any infinite graph, there exists a super-critical phase \emph{i.e} $\lambda_c < +\infty$ (on a finite graph, the process necessarily dies out since it takes values in a finite space with zero being the unique absorbing state). This follows, for instance, from comparison with an oriented percolation process, see \cite{Li1}. On the other hand, if the graph has bounded degrees, then there also exists a non trivial sub-critical phase, \emph{i.e.} $\lambda_c > 0$. This can be seen by coupling the contact process with a continuous time branching random walk with reproduction rate $\lambda$. Thus, the phase transition is non degenerated on any vertex-transitive graph such as $\Z^d$ and regular trees. The behaviour of the contact process on those graphs has been the topic of extensive studies in the last decades and is now relatively well understood. In particular, depending on the graph, there may exist a second critical value separating a strong and weak survival phase (see \cite{P,PS,S} for such results on trees). Again, we refer the reader to \cite{D,Li1,Li2} and the references therein for details.

\medskip

Comparatively, much less in known about the behavior of the contact process on more irregular graphs. Yet, in the last years, there has been renewed interest in considering these kind of graphs as they naturally appear as limits of finite random graphs such as Erd\H{o}s-Rényi graphs, configuration models or preferential attachment graphs \cite{BBCS,C,CS,CD,MMVY,MVY}.

\medskip

However, without the boundedness assumption on the degree of $G$, the situation is much more complicated and the existence of a sub-critical phase is not guaranteed. For example,
Pemantle \cite{P} proved that, on a Galton-Watson tree with reproduction law $B$ such that, asymptotically, $\P\{B \geq x\} \geq \exp(-x^\beta)$ for some $\beta <1$, then $\lambda_c = 0$. Thus, the degree distribution of a random tree can have moments of all orders and yet the contact process on it will still survive with positive probability even for arbitrarily small infection rates. This is a very different behavior from the one observed on regular trees with similar average degree and it indicates that the survival of the contact process depends on finer geometric aspects of the underlying graph than just its growth rate. In the case of Galton-Watson trees, we expect the critical value to be positive as soon as the reproduction law has exponential moments. This still remains to be proved, even for progeny distributions with arbitrarily light tails. In fact, to the best of our knowledge, there is no (non trivial) example of a graph with unbounded degrees for which it has been shown that $\lambda_c$ is non-zero. Worst, predictions of physicists hinting for non-zero values of $\lambda$ turned out to be wrong, see for instance \cite{CD}. The main goal of this paper is to find a sufficient condition on a graph $G$ for the contact process to have a non-trivial sub-critical phase and then give examples of classical graphs satisfying this condition.

\medskip

Let us quickly explain the difficulty met when  studying the contact process on a graph with unbounded degrees. First, the comparison between the contact process and the branching random walk becomes useless since the later process always survives with positive probability. This follows from the fact that it can survive on finite star graphs with large enough degree. Therefore, we must find another way to control the influence of sites with large degree. Those sites should be seen as ``sources'' which, once infected, will generate many new infections. Some of those infections may, in turn, reach other sites with high degree. This can lead to an amplification effect preventing the process to ever die out depending on the repartition of these sources inside the graph. In this respect, it is not so hard to find conditions for the process to survive: one just has to find groups of vertices containing enough sites with very large degree to make the process super-critical. On the other hand, in order to ensure the death of the process, it is necessary to consider the global geometry of the graph. The following heuristic is meant to shed some light on this last statement and will motivate the introduction of a particular partition of the vertex set of the graph which we call \emph{cumulatively merged partition}. This partition will, ultimately, become the main object of interest of this paper.

\bigskip

\noindent\textit{Heuristic.} It is well known that the contact process on a star graph of degree $d$ ( \emph{i.e} a vertex joined to $d$ leaves) has a survival time of exponential order (say, to simplify $\exp(d)$) when the infection parameter $\lambda$ is larger than some value $\lambda_c(d) > 0$. Now, consider the contact process on an infinite graph $G$ with unbounded degrees, and fix a very small infection rate $\lambda$ so that there are only very few sites in the graph where the contact process is locally super-critical (those with degree larger than say, $d_0$).

\begin{figure}[!t]
\begin{center}
\includegraphics[width=16cm]{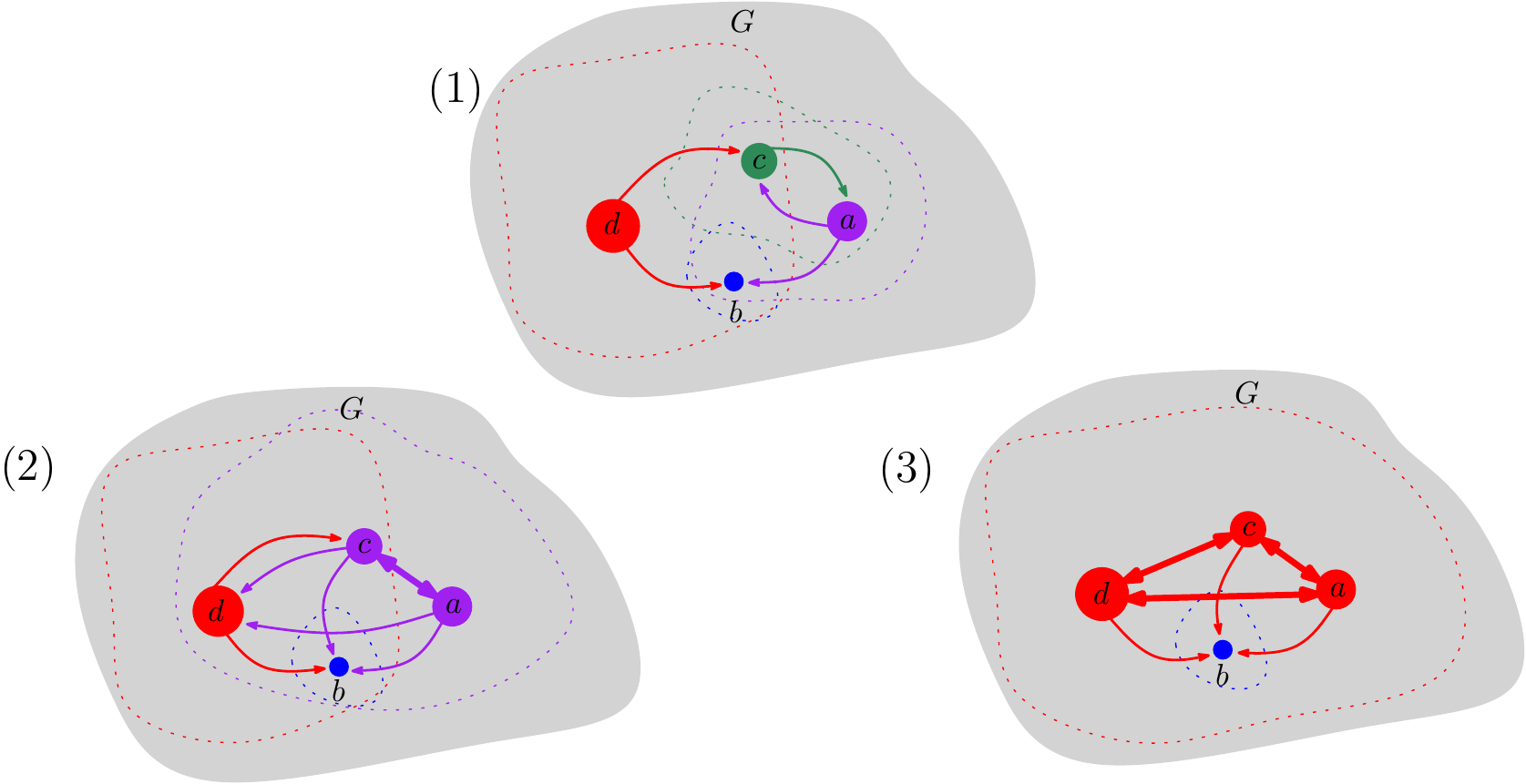}
\caption{\label{fig:heuristic} Illustration of the heuristic. In dotted lines are the maximal distances attained by infections started from each of the vertices $a$, $b$, $c$ and $d$.
Arrows represent infection fluxes and double arrows symbolize the grouping of sites, a group acting as a single site. In this example, $a$ and $c$ first merge together in (2), then the resulting cluster merges with site $d$ in (3). Note that site $b$ receives infections from the other sites but since it cannot reciprocate, it stays isolated during the merging procedure.}
\end{center}
\end{figure}

To see the influence of these vertices with anomalously large degree, imagine that we start the process with a single infected site $a$ having degree $d_a > d_0$. In addition, suppose that in a neighbourhood of $a$, every vertex has degree smaller than $d_0$.
Now run the process while forcing $a$ to stay infected for a time of order $\exp(d_a)$ after which the whole star around site $a$ recovers. By that time, roughly $\exp(d_a)$ infections will have been generated by the star around $a$. But, inside the neighbourhood of $a$, vertices have small degrees so the process is sub-critical and each infection emitted from $a$ propagates only up to a distance with finite expectation and exponential tail. This tells us that the maximal distance reached by the infections generated from $a$ should be roughly of order $d_a$.

Now imagine that within distance smaller than $d_a$ from $a$, there is some other vertex $b$ with degree $d_b > d_0$. Suppose also that $d_b$ is much smaller than the distance between $a$ and $b$. The previous heuristic applied to $b$ tells us that, in that case, the contact process started from site $b$ has little chance to ever infect $a$. Thus, infections generated by $a$ will propagate to $b$ but the converse is false. This means that, while $a$ is infected, infections regularly reach site $b$ but this flux stops when $a$ recovers, then $b$ survives for an additional time $\exp(b)$ without reinfecting $a$. So, the whole process survives for a time of order $\exp(d_a) + \exp(d_b) \approx \exp(d_a)$.

Consider now the case where there is a vertex $c$, again at distance less than $d_a$ from $a$, but this time with degree $d_c$ also larger than the distance between $a$ and $c$. In that case, infections generated by $a$ can reach $c$ and \emph{vice-versa}. Consequently, when either site $a$ or $c$ recovers, the other site has a high probability to reinfect it before its own recovery. This reinforcement effect means that, in order for the process to die out, both vertices $a$ and $c$ must recover almost simultaneously. This will happen after a time of order $\exp(d_a) \times \exp(d_c) = \exp(d_a + d_c)$. Thus, for the purpose of studying the extinction time, we can see both vertices $a$ and $c$ acting like a single vertex of degree $d_a + d_c$ (see Figure \ref{fig:heuristic} part (1) and (2)).

But now, our combined pair of vertices $(a,c)$ will send infections to a larger distance
$d_a + d_b$ and will possibly find other vertices to interact with (for example, vertex $d$ in Figure \ref{fig:heuristic}). Iterating this procedure, we recursively group vertices together, with the condition that two groups merge whenever the sum of the degrees inside each group is larger than the distance between them. Assuming that this procedure is well defined and converges, the limiting partition should satisfy the condition that, for any two equivalence classes $A$ and $B$,
\begin{equation}
\label{eq:CMPintro}
d(A,B) > \min \left\{ r(A); r(B) \right\}
\end{equation}
where $d$ is the graph distance and $r(A)$ is the sum of the degrees of the vertices of $A$.

\bigskip

In turns out that the limiting partition exists and does not depend on the order the merging procedure is performed. Its study is the purpose of Section \ref{sec:CMP} where we rigorously define it for a general weighted graph. Then, we examine some of its properties and provide a description of its internal structure. To the best of our knowledge this cumulatively merged partition (CMP) defines a new model which appears to be quite rich while still remaining amenable to analysis. We think it is interesting in its own right and might prove worthy of further investigation.

Next, in Section \ref{sec:phasetransition}, we consider various types of classical random weighted graphs for which we study the question of percolation: is there an infinite cluster in the partition? We prove that, under reasonable assumptions and similarly to classical percolation, there exists a phase transition. More precisely, we show that for i.i.d. weights, both sub-critical and super-critical phases exist on $d$-dimensional regular lattices (see Corollary \ref{coro:pc<1}, Proposition \ref{prop:CMPsubcritical} and Corollary \ref{coro:BerCMPsubcritical} for precise statements). We also prove similar results for
geometric graphs and Delaunay triangulations weighted by their degrees (Proposition \ref{prop:CMPDelaunay}). Most of the proofs in this section rely on multiscale analysis as the recursive structure of the partition  is particularly suited for this kind of arguments.

In Section \ref{sec:connectioncontact}, we return to our original question about contact process and we try to make rigorous the previous heuristic. The main result of this section, while not being completely satisfactory, gives a sufficient condition for the contact process to have a non-trivial phase transition by relating it with the existence of an infinite cluster for a particular CMP on the graph weighted by its degrees, see Theorem \ref{TheoCMPtoCP}. As an application, combining this criterion with results from Section \ref{sec:RGG and Delaunay}, we
obtain examples of graphs with unbounded degrees having a non trivial phase transition:
\begin{theo}\label{theo:CPonRGGandDelaunay} Let $G$ be either a (supercritical) random geometric graph or a Delaunay triangulation constructed from a Poisson point process with Lebesgue intensity on $\R^d$. Then, the critical infection parameter $\lambda_c$ for the contact process on $G$ is strictly positive.
\end{theo}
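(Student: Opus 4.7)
The plan is to deduce Theorem \ref{theo:CPonRGGandDelaunay} as a direct combination of the two general results announced earlier: Theorem \ref{TheoCMPtoCP}, which reduces the positivity of $\lambda_c$ on a given graph to a percolation statement about the cumulatively merged partition (CMP) of that graph weighted by its degrees, and Proposition \ref{prop:CMPDelaunay}, which asserts the existence of a sub-critical phase for the CMP on random geometric graphs and Delaunay triangulations built from a Poisson point process.

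Concretely, I would proceed as follows. Let $G$ be either a supercritical random geometric graph or the Delaunay triangulation associated with a homogeneous Poisson point process of intensity $1$ on $\R^d$; assign to each vertex $v$ the weight $\deg_G(v)$. Theorem \ref{TheoCMPtoCP} says that, for a suitable choice of exponent $\alpha$ in the CMP construction, there exists $\lambda_0(\alpha) > 0$ such that whenever the CMP on the weighted graph $(G,\deg)$ contains \emph{no} infinite cluster almost surely, the contact process on $G$ satisfies $\lambda_c \geq \lambda_0 > 0$. Thus it suffices to check the absence of an infinite cluster in the associated CMP, which is precisely the statement of the sub-critical half of Proposition \ref{prop:CMPDelaunay}. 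The theorem then follows by transitivity.

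The only non-routine point is matching the parameters used in the two results. Theorem \ref{TheoCMPtoCP} requires $\alpha$ large enough so that the comparison between the contact process and the star-graph heuristic of the introduction can be made rigorous, while Proposition \ref{prop:CMPDelaunay} provides sub-criticality only under an upper bound on $\alpha$ depending on the tail of the degree distribution. One therefore has to verify that there is a non-empty range of admissible exponents for which both results apply simultaneously. For the random geometric graph this follows from the fact that degrees have Poissonian tails, and for the Delaunay triangulation it follows from the known exponential-type tails of the typical cell's degree under a Poisson input; in both cases the weights decay fast enough to tolerate the value of $\alpha$ imposed by Theorem \ref{TheoCMPtoCP}.

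The main obstacle, hidden inside this combination, is really contained in Proposition \ref{prop:CMPDelaunay}: the weights $\deg_G(v)$ are not independent but are correlated through the underlying Poisson geometry, so that the multiscale percolation argument developed in Section \ref{sec:phasetransition} has to be run in a setting where the randomness of the weights and the randomness of the graph are coupled. Once this is granted, the deduction of Theorem \ref{theo:CPonRGGandDelaunay} itself is essentially a one-line application of Theorem \ref{TheoCMPtoCP} to the configuration provided by Proposition \ref{prop:CMPDelaunay}.
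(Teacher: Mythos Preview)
Your proposal is correct and matches the paper's own deduction: the theorem is obtained by combining Theorem~\ref{TheoCMPtoCP} with Proposition~\ref{prop:CMPDelaunay}, exactly as you describe. One minor correction: your worry about ``matching the parameters'' is unfounded, since Proposition~\ref{prop:CMPDelaunay} gives $\Delta_c(\alpha)<\infty$ for \emph{every} $\alpha\geq 1$ (the exponential tails of the degrees supply moments of all orders), so there is no upper bound on $\alpha$ to reconcile with the requirement $\alpha\geq 5/2$ of Theorem~\ref{TheoCMPtoCP}.
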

Finally, in Section \ref{sec:questions} we discuss some open questions and possible extensions related to the CMP and its connection with the contact process.

\section{Cumulative merging on a weighted graph}
\label{sec:CMP}

\subsection{Definition and general properties}

In the rest of this article, $G \defeq (V,E)$ will always denote a locally finite connected graph. In all cases of interest, the graph will be assumed infinite. We use the notation $d(\cdot,\cdot)$ for the usual graph distance on $G$. The ball of radius $l\geq 0$ around a vertex  $x \in V$ is denoted by $B(x_0,l) \defeq \{ x\in V, d(x_0,x) \leq l\}$. More generally, for $A,B \subset V$, we set
\begin{eqnarray*}
d(A,B) &\defeq &\inf\{ d(x,y)\; :\; (x,y)\in A\times B\},\\
B(A,l) &\defeq & \{z\in V \; : \; d(z,A) \leq l \}, \\
\diam(A) & \defeq & \sup\{d(x,y)\; : \; (x,y)\in A^2\}.
\end{eqnarray*}
The graph $G$ is equipped with a sequence of non-negative weights defined on the vertices:
$$(r(x),\; x\in V) \in [0,\infty)^V.$$
It is convenient to see $r$ as a measure on $V$ so the total weight of a set $A\subset V$ is given by
\begin{equation*}
r(A) \defeq \sum_{x\in A}r(x).
\end{equation*}
We also fix
$$
1 \leq \alpha <+\infty
$$
to which we refer as the \emph{expansion exponent}. For reasons that will become clear later, we call the quantity $r(A)^\alpha$ the \emph{influence radius} of the set $A$.
The triple $(G,r,\alpha)$ defines the parameters of our model.


We are interested in partitions of the vertex set of the graph and need to introduce some additional notation. If $\kC$ is a partition of $V$, we denote by $\sim^\kC$ the associated equivalence relation. In our setting, equivalence classes will often be referred to as \emph{clusters}. For $x\in V$, we denote by $\kC_x$ the cluster of $\kC$ containing $x$. Finally, If $\kC$ and $\kC'$ are two partitions of $V$, we will write $\kC \prec \kC'$ when $\kC'$ is coarser than $\kC$. The goal of this section is to study the partitions of $V$ which satisfy the following property:

\begin{defi} A partition $\kC$ of the vertex set $V$ of $G$ is said to be $(r,\alpha)-$\emph{admissible} if it is such that
\begin{equation}
\label{eq:proplimitCMP}
\forall C , C' \in \kC,\qquad C\neq C' \quad \Longrightarrow \quad d(C,C') >  \min\{r(C),r(C')\}^\alpha.
\end{equation}
\end{defi}

\begin{rema}
The most natural case corresponds to $\alpha = 1$ when there is no space expansion. In this case, the definition of admissibility coincides with Condition \ref{eq:CMPintro} stated in the introduction. However, in later sections, we will need results valid for general $\alpha$. Let us note that changing the expansion parameter from $1$ to $\alpha$ is not the same as merely changing every site weight to $r(x)^\alpha$. In fact, from the inequality $(\sum r(x))^\alpha \geq \sum r(x)^\alpha$, we see that $(r,\alpha)$-admissibility implies $(r^\alpha,1)$-admissibility but the converse is false in general.
\end{rema}

Let us note that the trivial partition $\{ V \}$ is always admissible for any choice of $(r,\alpha)$. Given two partitions $\kC$ and $\kC'$, we can define
\begin{equation*}
\kC \cap \kC' \defeq \{ C \cap C' : C \in \kC, C' \in \kC' \}.
\end{equation*}
More generally, given a family $(\kC^i)_{i\in I}$ of partitions, we define $\bigcap_{i\in I}\kC^i$ via the equivalence relation:
\begin{equation*}
x \overset{\bigcap_i \kC^i}{\sim} y \quad \Longleftrightarrow \quad x \overset{\kC^i}{\sim} y\;\hbox{ for all $i\in I$.}
\end{equation*}
Suppose now that every partition $\kC^i$ is admissible. Let $C \neq \widetilde{C}$ be two distinct clusters of $\bigcap_{i\in I}\kC^i$. By definition, there exist $i_0\in I$ such that $C \subset C^{i_0} \in \kC^{i_0}$ and $\widetilde{C} \subset \widetilde{C}^{i_0} \in \kC^{i_0}$ with $C^{i_0} \neq \widetilde{C}^{i_0}$. Hence
$$
d(C,\widetilde{C}) \geq d(C^{i_0},\widetilde{C}^{i_0}) > \min(r(C^{i_0}),r(\widetilde{C}^{i_0}))^\alpha \geq \min(r(C),r(\widetilde{C}))^\alpha
$$
which shows that $\bigcap_{i\in I}\kC^i$ is also admissible. Thus, being admissible is a property which is stable under intersection of partitions. This leads us to the following natural definition:

\begin{defi}\label{defCMP} We call \emph{Cumulatively Merged Partition} (CMP) of the graph $G$ with respect to $r$ and $\alpha$ the finest $(r,\alpha)$-admissible partition. It is the intersection of all $(r,\alpha)$-admissible partitions of the graph:
\[
\cmp(G,r,\alpha) \defeq \bigcap_{
\substack{
\text{$(r,\alpha)$-admissible}\\
\text{partition $\kC$ of $G$}}}
\kC.
\]
When there is no ambiguity, we will drop $G, r, \alpha$ from the notation and simply write $\cmp$.
\end{defi}

\begin{rema}
The CMP is monotone in $r$ and $\alpha$:  for $\alpha \leq \alpha'$ and for $r \leq r'$ (for the canonical partial order), we have
$$
\cmp(G,r,\alpha) \prec \cmp(G,r',\alpha').
$$
\end{rema}
The formal definition of CMP is mathematically satisfying but it is not very useful in practice. We introduce another characterization which provides an explicit algorithm for constructing $\cmp$ by repeated merging of clusters (and justifies, incidentally, the name of this partition).

\begin{defi}
\label{def:mergingoperator}
Let $x,y \in V$. The merging operator $M_{x,y}$ is the function from the set of partitions of $V$ onto itself defined by
\[
M_{x,y} (\kC) \defeq
\begin{cases}
\left( \kC \setminus \{\kC_x, \kC_y\} \right) \cup \{\kC_x \cup \kC_y\} & \text{if $\kC_x \neq \kC_y$ and $d(x,y) \leq \min(r(\kC_x),r(\kC_y))^\alpha$,} \\
\, \kC & \text{otherwise.}
\end{cases}
\]
\emph{i.e.} it outputs the same partition except for clusters $\kC_x$ and $\kC_y$ which are merged together
whenever they do not satisfy \eqref{eq:proplimitCMP}.
\end{defi}

By definition, the merging operator always returns a coarser partition than its argument:
\begin{equation}\label{monotoneMergeOp1}
\kC \prec M_{x,y}(\kC).
\end{equation}
Moreover, it is easy to verify that this operator is monotone in the following sense: for any two partitions $\kC,\kC'$ and any $x,y\in V$, we have
\begin{equation}\label{monotoneMergeOp2}
\kC \prec \kC' \quad\Longrightarrow\quad  M_{x,y}(\kC) \prec M_{x,y}(\kC').
\end{equation}
Let us also point out that $M_{x,y}$ restricted to the set of $(r,\alpha)$-admissible partitions is the identity operator. We can now state the algorithm used to construct the CMP. It formalizes the procedure described in the introduction of the paper.
\begin{CMPenv} \
\begin{itemize}
\item Fix a sequence $(x_n,y_n)_{n \in \N}$ of pairs of vertices of $V$.
\item Start from the finest partition $\kC^0 \defeq \{ \{x\}, x\in V \}$ and define by induction
$$
\kC^{n+1} \defeq M_{x_n,y_n}(\kC^n).
$$
\item For every $n$, the partition $\kC^{n+1}$ is coarser than $\kC^n$. This allows to define the limiting partition $\cmp \defeq \lim\uparrow \kC^n$ via the relation
$$
x \overset{\cmp}{\sim} y \quad \Longleftrightarrow\quad x \overset{\kC^n}{\sim} y \hbox{ for some $n$ .}
$$
\end{itemize}
\end{CMPenv}

\begin{prop} Assume that the sequence $(x_n,y_n)$ satisfies
\begin{equation}\label{eq:algosuite}
\text{$\{x_n,y_n\} = \{x,y\}$ for infinitely many $n$,}
\end{equation}
for every $x,y\in V$ with $x\neq y$.
Then the partition $\cmp$ defined by the cluster merging procedure does not depend on the choice of the sequence $(x_n,y_n)$ and coincides with the CMP of Definition \ref{defCMP}.
\end{prop}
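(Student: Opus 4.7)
The plan is to prove the stronger statement that the partition $\kC^\infty = \lim\uparrow \kC^n$ produced by the Cluster Merging Procedure coincides with the intrinsic partition $\cmp$ of Definition \ref{defCMP}; since $\cmp$ depends only on $(G,r,\alpha)$ and not on the merging sequence, the independence claim and the identification with the formal CMP follow simultaneously. I would establish the two inclusions $\kC^\infty \prec \cmp$ and $\cmp \prec \kC^\infty$ separately.

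The first inclusion is an induction on $n$ showing $\kC^n \prec \cmp$. The base case holds since $\kC^0$ is the finest partition of $V$. For the inductive step, the monotonicity property \eqref{monotoneMergeOp2}, combined with the fact that $M_{x,y}$ acts as the identity on $(r,\alpha)$-admissible partitions, yields $\kC^{n+1} = M_{x_n,y_n}(\kC^n) \prec M_{x_n,y_n}(\cmp) = \cmp$. Passing to the increasing limit preserves the inequality, so $\kC^\infty \prec \cmp$.

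For the reverse inclusion, the strategy is to verify that $\kC^\infty$ is itself admissible, so that the minimality of $\cmp$ among admissible partitions forces $\cmp \prec \kC^\infty$. Arguing by contradiction, I would assume there exist distinct $C, C' \in \kC^\infty$ with $d(C, C') \leq \min(r(C), r(C'))^\alpha$ and pick $x \in C, y \in C'$ realising $d(x, y) = d(C, C')$. Each $\kC^n$ is obtained from $\kC^0$ by finitely many merges, so every cluster of $\kC^n$ is a finite set of vertices, and the non-decreasing sequences $\kC^n_x \subset C$ and $\kC^n_y \subset C'$ exhaust $C$ and $C'$ in the limit. A preliminary observation is that, because the merging condition forces $\min(r(\kC_a), r(\kC_b))^\alpha \geq d(a,b) \geq 1$ for any two distinct vertices $a \neq b$, each merge adds a cluster of weight at least one to the current one; consequently any cluster of $\kC^\infty$ with infinitely many vertices must carry infinite total weight. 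With this in hand one can in every case produce an index $n_0$ such that $\min(r(\kC^{n_0}_x), r(\kC^{n_0}_y))^\alpha \geq d(x,y)$: if $C$ and $C'$ are both finite, take $n_0$ large enough that $\kC^{n_0}_x = C$ and $\kC^{n_0}_y = C'$; otherwise at least one of $r(C), r(C')$ is infinite, while finite approximations of the other can be made with weight arbitrarily close. The exhaustiveness hypothesis \eqref{eq:algosuite} then supplies $m \geq n_0$ with $(x_m, y_m) = (x, y)$, at which step the merge operator triggers and forces $\kC^{m+1}_x = \kC^{m+1}_y$, contradicting $\kC^\infty_x \neq \kC^\infty_y$.

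The main subtlety is precisely the preliminary observation that infinite clusters have infinite weight, which rules out the delicate boundary case in which the admissibility inequality would hold with equality while the required weight is attained only in the limit, potentially preventing a merge from ever triggering at a finite step. Once this observation is secured, the remainder of the argument is routine bookkeeping of the merging dynamics against the exhaustiveness of the sequence.
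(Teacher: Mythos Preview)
Your proof is correct and follows essentially the same strategy as the paper: show that the limit $\kC^\infty$ is refined by every admissible partition (via monotonicity of $M_{x,y}$ and the fact that it fixes admissible partitions), and show that $\kC^\infty$ is itself admissible by contradiction (a violating pair would trigger a merge at some finite step thanks to the exhaustiveness hypothesis). The paper additionally proves independence of the sequence directly, by comparing two arbitrary sequences via an injection $\phi$; you instead deduce independence from the identification $\kC^\infty = \cmp$, which is cleaner.

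One point worth noting: your ``preliminary observation'' that an infinite cluster of $\kC^\infty$ necessarily has infinite weight is genuinely needed to close the admissibility argument, and you handle it explicitly. The paper's proof is terser here (``$\cmp_x$ and $\cmp_y$ would necessarily have merged at some point'') and implicitly relies on this same fact, which it only states and proves afterwards in Proposition~\ref{prop:easycmp}. Your treatment is therefore slightly more self-contained on this point.
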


\begin{proof}
Let $(\kC^n)$ and $(\widetilde{\kC}^n)$ be two sequences of partitions obtained by running the algorithm with
respective sequences $(x_n,y_n)$ and $(\tilde{x}_n,\tilde{y}_n)$, both satisfying property \eqref{eq:algosuite}. There exists an injection $\phi : \N \to \N$ such that $(x_n,y_n) = ( \tilde{x}_{\phi (n)}, \tilde{y}_{\phi (n)} )$ for every $n$. Using the monotonicity properties \eqref{monotoneMergeOp1} and \eqref{monotoneMergeOp2} of the merging operator, we check by induction that $\widetilde{\kC}^{\phi(n)}$ is coarser that $\kC^n$ hence $\cmp \prec \widetilde{\cmp}$ and equality follows by symmetry.

The fact that the partition $\cmp$ obtained with the algorithm is $(r,\alpha)$-admissible is straightforward since otherwise, we could find $x,y \in V$ belonging to different clusters and such that $d(x,y) \leq \min(r(\cmp_x),r(\cmp_y))^\alpha$; but then $\cmp_x$ and $\cmp_y$ would necessarily have merged at some point of the procedure thanks to \eqref{eq:algosuite}.

It remains to prove the minimality property. Let $\widehat{\kC}$ be an $(r,\alpha)$-admissible partition and let $(\widehat{\kC}^n)$ be the sequence of partitions obtained by running the cluster merging procedure with the same sequence $(x_n,y_n)$ but starting from the initial partition $\widehat{\kC}^0 = \widehat{\kC}$ instead of the finest partition. Since, the merging operator does not modify an admissible partition and since $\kC^0 \prec \widehat{\kC}^0$, the monotonicity of $M_{x,y}$ yields
$$
\kC^n \prec \widehat{\kC}^n = \widehat{\kC} \quad \hbox{for all $n$.}
$$
Hence, taking the limit $n \to \infty$, we get $\cmp \prec \widehat{\kC}$ and so $\cmp$ is indeed the finest admissible partition.
\end{proof}

We start our study of the CMP by collecting some easy properties of this partition.
\begin{samepage}
\begin{prop}\label{prop:easycmp}\
\begin{enumerate}
\item\label{easypropcmp1} Any site $x\in V$ with $r(x) < 1$ is isolated in the CMP \emph{i.e.} $\{x\} \in \cmp$ (the converse is false in general).
\item\label{easypropcmp2} For any $C\in \cmp$, we have $|C| \leq \max(r(C),1)$.
\item \label{easypropcmp3} For any $C\in \cmp$, we have
the equivalence: $r(C) = +\infty \; \Leftrightarrow \;  |C| = +\infty$.
\item\label{easypropcmp4} There is at most one infinite cluster.
\end{enumerate}
\end{prop}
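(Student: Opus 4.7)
My plan is to prove the four items sequentially, each building on the previous. Item (1) is the crux and is most transparent through the Cluster Merging Procedure characterization of $\cmp$. Starting the procedure from the singleton partition $\{\{y\}:y\in V\}$, the cluster $\{x\}$ can only be affected at a step in which the merging criterion $d(x,y) \le \min(r(x), r(\kC^n_y))^\alpha$ triggers for some other vertex $y$. But $d(x,y) \ge 1$ whenever $y \ne x$, while the hypotheses $r(x) < 1$ and $\alpha \ge 1$ give $r(x)^\alpha < 1$. So this threshold is never reached, $\{x\}$ stays a singleton throughout the procedure, and hence $\{x\}\in\cmp$.

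Item (2) follows from (1) by contraposition: if $C \in \cmp$ with $|C| \ge 2$, then no $x \in C$ can be a singleton cluster of $\cmp$, so each such $x$ must satisfy $r(x) \ge 1$. Summing gives $r(C) \ge |C|$, which together with the trivial bound $|C|=1\le\max(r(C),1)$ for singletons yields the inequality. Item (3) is then essentially immediate: the forward direction $|C|=\infty\Rightarrow r(C)=\infty$ is a direct consequence of (2), and the reverse direction uses only that each weight $r(x)$ is finite by hypothesis, so a finite cluster necessarily has finite total weight.

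For item (4), I would argue by contradiction. If $C_1$ and $C_2$ were two distinct infinite clusters of $\cmp$, item (3) would give $r(C_1)=r(C_2)=+\infty$, hence $\min(r(C_1),r(C_2))^\alpha = +\infty$. But $G$ is connected and locally finite, so $d(C_1,C_2) < +\infty$, contradicting the admissibility property \eqref{eq:proplimitCMP} satisfied by $\cmp$.

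Only item (1) requires genuine thought; the remaining three items are short bookkeeping consequences of admissibility, the finiteness of weights, and the connectedness of $G$. I do not foresee any real obstacle, and the whole argument should fit in well under a page.
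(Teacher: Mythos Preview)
Your proposal is correct and follows essentially the same approach as the paper's own proof: item (1) via the merging procedure and the observation that $d(x,y)\geq 1 > r(x)^\alpha$ when $r(x)<1$, item (2) by contraposition from (1), item (3) from (2) together with finiteness of individual weights, and item (4) by contradiction with admissibility. The only difference is that you spell out a few details (e.g.\ the invocation of connectedness for $d(C_1,C_2)<\infty$) that the paper leaves implicit.
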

\end{samepage}

\begin{proof} If a site $x$ is such that $r(x) < 1$, then $d(x,y)\geq 1 > r(x)^\alpha$ for any other site $y$, hence $\{x\}$ will never merge during the CMP. This proves \ref{easypropcmp1}. Now, if a cluster contains more than one site, then according to \ref{easypropcmp1}., the radius of each of its site is at least $1$, proving \ref{easypropcmp2}. The third statement follows from \ref{easypropcmp2} and the fact that all radii are finite by hypothesis. Finally, if there were two distinct infinite clusters, they would both have infinite influence radius so they would not satisfy \eqref{eq:proplimitCMP}.
\end{proof}

\begin{figure}[!t]
\begin{center}
\includegraphics[width=11cm]{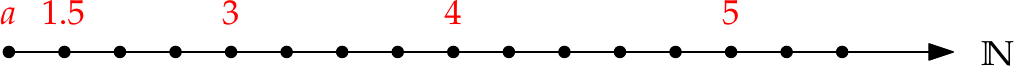}
\caption{\label{fig:changeradius} [In this example $G= \N$, $\alpha = 1$ and radii not displayed are equal to $0$]
For $a < 1$, the CMP on this weighted graph is the finest partition (it contains only isolated vertices).  For $1\leq a < 1.5$, there is a cluster with two sites and all the other sites are isolated. For $a\geq 1.5$, there is an infinite cluster composed of all the sites with non-zero radii. }
\end{center}
\end{figure}

The proposition above states that the CMP remains unchanged if all the radii $r(x) < 1$ are replaced by $0$. On the contrary, changing, even slightly, the value of a single radius with $r(x) \geq 1$ can dramatically change the structure of $\cmp$. This is illustrated in Figure \ref{fig:changeradius} and it shows that local changes can propagate to infinity.

Figure \ref{fig:changeradius} also illustrates the fact that clusters of $\cmp$ are not necessarily connected sets. In fact, they can even have asymptotic zero density.
The following proposition gives an upper bound on the diameter of the clusters
with respect to their size and will prove useful in the next sections.

\begin{prop}\label{prop_diam_weight} For any
cluster $C \in \cmp$,
\begin{equation}\label{connectedcluster}
\hbox{$B(C,r(C)^\alpha)$ is connected}
\end{equation}
and
\begin{equation}\label{ineq_weight_diam}
\diam(C) \leq \left\{
\begin{array}{ll}
\max\left(\frac{r(C)\log_2 r(C)}{2},0\right) &\hbox{ if $\alpha = 1$,}\\
\frac{r(C)^\alpha}{2^\alpha - 2} &\hbox{ if $\alpha > 1$.}
\end{array}\right.
\end{equation}
\end{prop}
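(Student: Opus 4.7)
My plan is to establish both claims simultaneously by induction on the steps of the Cluster Merging Procedure. I fix any sequence $(x_n, y_n)$ satisfying \eqref{eq:algosuite} and let $(\kC^n)$ be the resulting partitions; the inductive claim is that every cluster $D \in \kC^n$ satisfies (a) $B(D, r(D)^\alpha)$ is connected in $G$, and (b) $\diam(D) \leq f(r(D))$, where $f$ denotes the right-hand side of \eqref{ineq_weight_diam}. The base case $n=0$ is immediate: singletons have zero diameter, and $B(x, r(x)^\alpha)$ is a graph ball. The statements then transfer to the limit $\cmp$ without difficulty: any finite cluster of $\cmp$ already appears as a cluster of some $\kC^n$, while any infinite cluster has $r(C) = +\infty$ by Proposition \ref{prop:easycmp}, which makes both conclusions trivial.

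For the inductive step, suppose $A, B \in \kC^n$ merge into $C = A\cup B \in \kC^{n+1}$, so that $d(A,B) \leq \min(r(A), r(B))^\alpha$; I assume WLOG $r(A) \leq r(B)$. For (a), the induction hypothesis says $B(A, r(A)^\alpha)$ and $B(B, r(B)^\alpha)$ are connected, and enlarging a connected set $B(X, s)$ to radius $s'\geq s$ preserves connectedness (every vertex of $B(X, s')$ is joined to $X$ by a geodesic lying entirely inside $B(X, s')$). Consequently both $B(A, r(C)^\alpha)$ and $B(B, r(C)^\alpha)$ are connected, and since $d(A,B) \leq r(A)^\alpha \leq r(C)^\alpha$, any vertex on a geodesic realizing $d(A,B)$ lies in both sets, so their union $B(C, r(C)^\alpha)$ is connected as well.

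For (b), the triangle inequality together with the induction hypothesis gives
$$
\diam(C) \leq \diam(A) + d(A,B) + \diam(B) \leq f(r(A)) + r(A)^\alpha + f(r(B)),
$$
so the task reduces to the numerical inequality $f(a) + a^\alpha + f(b) \leq f(a+b)$ for $1 \leq a \leq b$ (one may assume $a \geq 1$ since, by Proposition \ref{prop:easycmp}, no vertex of weight $< 1$ ever participates in a merge). For $\alpha > 1$, normalizing by $(a+b)^\alpha$ and setting $t = a/(a+b) \in (0, 1/2]$ recasts this as $g(t) := (2^\alpha - 1)t^\alpha + (1-t)^\alpha \leq 1$, which holds because $g$ is convex on $[0, 1/2]$ with $g(0) = g(1/2) = 1$. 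For $\alpha = 1$, the same manipulation reduces the inequality to $\phi(x) := \log_2 x + (x-1)\log_2\bigl(x/(x-1)\bigr) \geq 2$ for $x = (a+b)/a \geq 2$; a direct computation yields $\phi'(x) = \log_2\bigl(x/(x-1)\bigr) > 0$ and $\phi(2) = 2$, closing the induction.

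The principal obstacle will be the two numerical inequalities, especially the logarithmic $\alpha = 1$ case where the identity $\phi(2) = 2$ shows that the bound is tight precisely at merges of equal-weight clusters; once this real-analytic input is in hand, everything else is a clean monotone induction driven by the merging algorithm.
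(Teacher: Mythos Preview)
Your proof is correct and follows essentially the same approach as the paper: both argue by induction along the Cluster Merging Procedure, reducing the diameter bound to the functional inequality $f(a+b) \geq f(a) + f(b) + \min(a,b)^\alpha$ for $a,b \geq 1$, and both verify this inequality by a one-variable reduction (the paper substitutes $z = b/a$, you substitute $t = a/(a+b)$ resp.\ $x = (a+b)/a$, arriving at equivalent elementary calculus checks). Your treatment of the connectedness step and of the passage to the limit $\cmp$ is slightly more explicit than the paper's, but there is no substantive difference.
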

\begin{proof}
Define the function
$$
f(x) \defeq \left\{
\begin{array}{ll}
\max\left(\frac{x\log_2 x}{2},0\right) &\hbox{ if $\alpha = 1$,}\\
\frac{x^\alpha}{2^\alpha - 2} &\hbox{ if $\alpha > 1$.}
\end{array}\right.
$$
We first check that $f$ satisfies the functional inequality
\begin{equation}\label{funcineq}
f(a+b) \geq f(a) + f(b) + \min(a,b)^\alpha\quad \hbox{for all $a,b\geq 1$.}
\end{equation}
To see this, fix $b > a \geq 1$ and set $z = b/a$. We can write
\begin{equation*}
f(a+b) - f(a) - f(b) - \min(a,b)^\alpha =
\left\{
\begin{array}{ll}
\frac{a}{2}\left( \log_2(1+z) + z\log(1+\frac{1}{z})-2\right)& \hbox{if $\alpha = 1$,}\\
\frac{a^\alpha}{2^\alpha - 2}\left((1+z)^\alpha - z^\alpha +1-2^\alpha\right) & \hbox{if $\alpha > 1$.}
\end{array}\right.
\end{equation*}
In both cases, the functions appearing on the right hand side of the equality are non-decreasing in $z$ and take value $0$ at $z=1$. Hence $f$ satisfies \eqref{funcineq}.

Now, \eqref{connectedcluster} and \eqref{ineq_weight_diam} are trivial when $C$ is a singleton. In particular, all the clusters of the finest partition $\{\{x\},x\in V\}$ satisfy them. Since the CMP is obtained from repeated merging operations starting from the trivial partition, it suffices to prove that \eqref{connectedcluster} and \eqref{ineq_weight_diam} are stable by the merging operator. Assume that $C_1$ and $C_2$ are distinct clusters for which \eqref{connectedcluster} and \eqref{ineq_weight_diam} hold and such that their merging is admissible:
\begin{equation}\label{eqdiamweight}
1\leq d(C_1,C_2) \leq \min(r(C_1),r(C_2))^\alpha.
\end{equation}
Then, clearly, $B(C_1 \cup C_2, r(C_1\cup C_2)^\alpha)$ is connected. Moreover, using the triangle inequality combined with \eqref{funcineq}, we find that
\begin{align*}
\diam(C_1\cup C_2) &\leq \diam(C_1) + \diam(C_2) + d(C_1,C_2) \\
&\leq f(r(C_1)) + f(r(C_2)) + \min(r(C_1),r(C_2))^\alpha\\
&\leq f(r(C_1) + r(C_2))\\
&= f(r(C_1\cup C_2)).
\end{align*}
\end{proof}

\begin{rema} The bounds of the proposition are sharp. To see this when $\alpha = 1$, consider the sequence of weights $A_n$ on $\{1,2,\ldots,n2^{n-1} \}$ defined by induction by
$$
A_1 = \boxed{11} \quad \hbox{and} \quad A_{n+1} = A_n \, \underset{\hbox{\tiny{$2^{n}-1$ zeros}}}{{\boxed{0\ldots 0}}} \, A_n.
$$
For each $n$, all the sites inside $\{1,2,\ldots,n2^{n-1} \}$ with radius $1$ merge into a single cluster $C_n$ with $r(C_n) = 2^n$ and $\diam(C_n)= \frac{n}{2}2^n$. A similar construction also works for  $\alpha > 1$.
\end{rema}

\subsection{Oriented graph structure on \texorpdfstring{$\cmp$}{C}}
\label{sec:orderoriented}

The cluster merging procedure tells us that clusters of $\cmp$ are formed by aggregation of smaller clusters. This hints that there must be some structure hidden inside the clusters (because the merging operation cannot occur in any order). In this paper, we will not be concerned with this question even though we believe it could be of independent interest. Instead, we consider the relationship between distinct clusters of $\cmp$ and show that there is a hierarchical structure which provides a natural partial order over the set of clusters.

\begin{defi} We define the relation $\mapsto$ over the elements of $\cmp$ by
$$
C \mapsto C' \quad\Longleftrightarrow \quad C\neq C' \hbox{ and }  d(C,C') \leq r(C)^\alpha
$$
for any $C,C'\in\cmp$. When this holds, we say that $C'$ descends from $C$.
\end{defi}

This relation is anti-symmetric: one cannot have $C \mapsto C'$ and $C' \mapsto C$ simultaneously because it would contradict the admissibility property of $\cmp$. We interpret $(\cmp,\cdot\mapsto\cdot)$ as an oriented graph on the set of clusters. The next proposition gathers some easy, yet important, properties concerning this graph.

\begin{prop}
\label{prop:orientedmisc}
The oriented graph $(\cmp, \cdot \mapsto \cdot)$ is such that:
\begin{enumerate}
\item The out-degree of any cluster $C$ is smaller than $|B(C,r(C)^\alpha)| - |C|$. In particular, every cluster has finite out-degree except for the eventual infinite cluster.
\item If $\cmp$ has more than one cluster, then for every cluster $C$,
$$
\hbox{$C$ has out-degree $0$}  \quad\Longleftrightarrow\quad r(C) <1
\quad\Longleftrightarrow\quad \hbox{$C$ is a singleton}.$$
\item\label{orientedmisc3} If $C \mapsto C'$, then $\lfloor  r(C)^\alpha \rfloor > \lfloor r(C')^\alpha \rfloor$ where $\lfloor x \rfloor$ denotes the integer part of $x$.
\item\label{orientedmisc4} There is no infinite oriented path $C_1 \mapsto C_2 \mapsto \ldots$ in $(\cmp, \mapsto)$ (in particular there is no oriented circuit $C_1 \mapsto C_2 \mapsto \ldots \mapsto C_1$). In addition, if $C \in \cmp$ is such that $|C| < \infty$, then every oriented path started form $C$ has length at most $\lfloor r(C)^\alpha \rfloor$.
\end{enumerate}
\end{prop}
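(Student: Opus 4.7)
I would prove the four items in order, leveraging admissibility of $\cmp$ together with Proposition~\ref{prop:easycmp}. For item~1, the observation is that whenever $C \mapsto C'$, the cluster $C'$ must contain at least one vertex in $B(C, r(C)^\alpha) \setminus C$, and distinct clusters are disjoint. Thus the out-degree of $C$ is at most $|B(C, r(C)^\alpha)| - |C|$. When $C$ is finite, Proposition~\ref{prop:easycmp}.\ref{easypropcmp3} gives $r(C) < \infty$, and local finiteness of $G$ ensures $B(C, r(C)^\alpha)$ is finite, hence so is the out-degree.

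For item~2 I would establish the two equivalences in turn. That out-degree $0$ is equivalent to $r(C) < 1$ combines admissibility with connectivity of $G$: the ``$\Leftarrow$'' direction is immediate because $r(C)^\alpha < 1 \leq d(C, C')$ forbids any arrow out of $C$; for ``$\Rightarrow$'', connectivity of $G$ together with $|\cmp| > 1$ produces a cluster $C'$ adjacent to $C$ (so that $d(C, C') = 1$), and out-degree $0$ then forces $r(C)^\alpha < 1$. The equivalence of $r(C) < 1$ with ``$C$ is a singleton'' is then a direct consequence of item~\ref{easypropcmp2} of Proposition~\ref{prop:easycmp}, namely $|C| \leq \max(r(C), 1)$.

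For item~3, the key is that $d(C, C')$ is an integer, so admissibility of $\cmp$ upgrades from the strict inequality $d(C, C') > \min(r(C), r(C'))^\alpha$ to $d(C, C') \geq \lfloor \min(r(C), r(C'))^\alpha \rfloor + 1$, while $C \mapsto C'$ yields $d(C, C') \leq \lfloor r(C)^\alpha \rfloor$. Chaining the two gives $\lfloor \min(r(C), r(C'))^\alpha \rfloor < \lfloor r(C)^\alpha \rfloor$, which precludes the minimum from being $r(C)$ and therefore leaves $\lfloor r(C')^\alpha \rfloor < \lfloor r(C)^\alpha \rfloor$. Item~4 then falls out: along any oriented path $C = C_0 \mapsto C_1 \mapsto \cdots$, the integers $\lfloor r(C_i)^\alpha \rfloor$ form a strictly decreasing non-negative sequence, so the path has length at most $\lfloor r(C_0)^\alpha \rfloor$ whenever $C_0$ is finite; for a path starting at the (unique, by Proposition~\ref{prop:easycmp}.\ref{easypropcmp4}) infinite cluster, a single step lands in a finite cluster, reducing to the previous case. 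The main subtlety, in my view, is exactly the use of integrality of $d(C, C')$ in item~3: without it one would only recover $r(C')^\alpha \leq r(C)^\alpha$, which is too weak to force a strict decrease of integer floors and hence insufficient to rule out infinite oriented paths.
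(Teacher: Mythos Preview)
Your proof is correct and follows essentially the same path as the paper's: the injection of out-neighbours into $B(C,r(C)^\alpha)\setminus C$ for item~1, connectivity of $G$ for item~2, integrality of the graph distance to upgrade the inequality in item~3, and the strictly decreasing sequence of nonnegative integers $\lfloor r(C_i)^\alpha\rfloor$ for item~4. If anything, you supply more detail than the paper, which dispatches item~2 as ``straightforward'' and compresses item~3 into the single chain $r(C')^\alpha < d(C,C') \leq r(C)^\alpha$ (using anti-symmetry of $\mapsto$ implicitly) rather than your case analysis on the minimum.
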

\begin{proof}
The first statement follows from the fact that if $C \mapsto C'$, then $d(C,C') \leq r(C)^\alpha$ and therefore $C' \cap \left( B(C,r(C)^\alpha) \setminus C \right) \neq \emptyset$. The second statement is straightforward recalling that $G$ is assumed to be connected. For the third statement, notice that the graph distance $d(\cdot,\cdot)$ only takes integer values. Consequently, if $C \mapsto C'$, then
$r(C')^\alpha < d(C,C') \leq r(C)^\alpha$ which implies $\lfloor  r(C')^\alpha \rfloor < \lfloor r(C)^\alpha \rfloor$. For the last statement, if $C_1 \mapsto C_2 \mapsto \ldots$ is a chain of cluster, then $\lfloor r(C_i)\rfloor$ is a strictly decreasing sequence in $\N\cup\{+\infty\}$ so it is necessarily finite and its length is at most
$\lfloor r(C_1)\rfloor$ (or $\lfloor r(C_2)\rfloor + 1$ if $C_1$ is the infinite cluster).
\end{proof}
According to the previous proposition, the graph $(\cmp, \cdot \mapsto \cdot)$ does not contain any cycle. Thus, the relation $ \vartriangleright$ given by
$$
C \vartriangleright C' \quad \Longleftrightarrow \quad \hbox{there exists an oriented path from $C$ to $C'$}
$$
defines a partial order on $\cmp$ and Statement \ref{orientedmisc4} can be re-expressed in the form:

\begin{coro} Any totally ordered subset of $(\cmp,\vartriangleright)$ is isomorphic to one of the following ordinals: $\{1, \ldots ,n\}$, $\N$ or $\N \cup \{\infty\}$.
\end{coro}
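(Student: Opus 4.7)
The plan is to combine the absence of infinite oriented paths (item \ref{orientedmisc4} of Proposition \ref{prop:orientedmisc}) with the uniqueness of the infinite cluster (item \ref{easypropcmp4} of Proposition \ref{prop:easycmp}). As a preliminary observation, $\vartriangleright$ is transitive by concatenation of witnessing oriented paths, and any totally ordered subset $S \subset (\cmp, \vartriangleright)$ is countable since $V$ itself is, so the order type of $S$ is automatically a countable ordinal.

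I would first check that $S$ is well-ordered for $\vartriangleright$. Indeed, a strictly decreasing chain $C_1 \vartriangleright C_2 \vartriangleright \cdots$ in $S$ would produce, by concatenation of the witnessing paths, an infinite oriented path in $(\cmp,\mapsto)$, which is ruled out by item \ref{orientedmisc4}. The statement then reduces to showing that this countable ordinal is at most $\omega + 1$.

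To achieve this, I would argue by contradiction: if the order type of $S$ were $\geq \omega + 2$, then $S$ would contain two distinct elements $C \vartriangleleft C'$ each admitting infinitely many strict predecessors in $S$. In particular, $C$ dominates infinitely many elements $D_1 \vartriangleleft D_2 \vartriangleleft \cdots$, and by transitivity so does $C'$. Hence both $C$ and $C'$ have infinitely many descendants in $(\cmp, \mapsto)$. However, the second half of item \ref{orientedmisc4} bounds the length of every oriented path starting from a finite cluster $F$ by $\lfloor r(F)^\alpha \rfloor$, and the first item of Proposition \ref{prop:orientedmisc} bounds the out-degree along such a path; together these imply that a finite cluster has only finitely many $\mapsto$-descendants. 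So $C$ and $C'$ would both have to be the infinite cluster, contradicting item \ref{easypropcmp4} of Proposition \ref{prop:easycmp} together with $C \neq C'$.

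The main subtlety, I expect, lies precisely in this last step: item \ref{orientedmisc4} directly forbids only infinite descending chains, whereas infinite ascending chains do genuinely occur in $(\cmp, \vartriangleright)$, so the bound $\omega + 1$ must be enforced indirectly via the uniqueness of the infinite cluster.
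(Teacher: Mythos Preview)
Your argument is correct and matches the paper's intent; the paper itself gives no separate proof, presenting the corollary simply as a reformulation of item~\ref{orientedmisc4} of Proposition~\ref{prop:orientedmisc}.

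One minor simplification: you do not need item~1 (finite out-degree) at all. Since $S$ is totally ordered, the infinitely many predecessors $D_1 \vartriangleleft D_2 \vartriangleleft \cdots \vartriangleleft C$ already form a single chain, and concatenating the witnessing $\mapsto$-paths yields, for each $k$, an oriented path from $C$ of length at least $k$; the second half of item~\ref{orientedmisc4} then directly forces $|C|=\infty$ without any appeal to branching.
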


\begin{figure}[!t]
\begin{center}
\includegraphics[width=11cm]{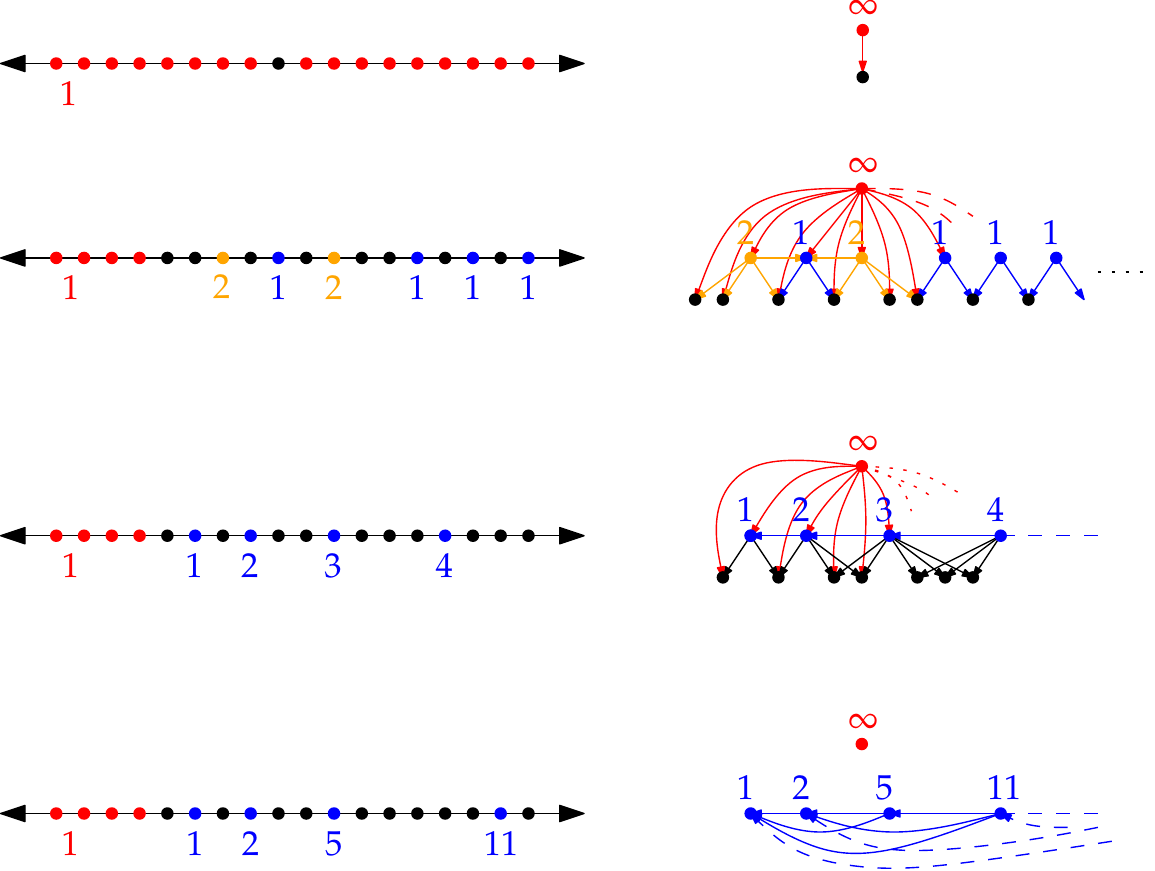}
\caption{\label{fig:oriented}Some examples of oriented graph structures on the set of clusters. In each example the underlying graph is $\Z$ and $\alpha = 1$. Red vertices all have radius $1$ and form an infinite cluster. Black vertices have radius $0$. Blue vertices have radius written below them. In every example, all clusters are a composed of a single vertex except for the infinite cluster.\\
First example: the oriented graph is finite. Second example: The graph is infinite but there is no infinite backtracking path. Third example: blue vertices form a infinite backtracking chain yet the in-degree of every cluster is finite. Fourth example: blue vertices form a infinite backtracking chain of vertices with infinite in-degrees [in this last example, arrows emanating from $\infty$ or pointing to black vertices are omitted].}
\end{center}
\end{figure}
Figure \ref{fig:oriented} gives some examples of possible oriented graph structures. It shows that, even though the out-degree of every vertex (except maybe one) is finite, in-degrees can be infinite. Let us remark that if there is one vertex with infinite in-degree, then there are infinitely many vertices  with this property and we can find an infinite sub-graph $\mathcal{H} = \{C_1,C_2,\ldots\} \subset \cmp$ such that $C_{i} \mapsto C_{j}$ if and only if $i > j$.

\subsection{Stable sets and stabilisers}

In this section, we introduce the notion of stable sets. These are subsets of $V$ such that the CMP on the inside and on the outside of the set are, in some sense, independent. Stable sets play a key role in understanding the structure of the CMP and in particular to determine whether or not there exists an infinite cluster.

Let $H$ be a subset of the vertex set of $G$. With a slight abuse of notation, we still denote by $H$ the sub-graph of $G$ induced by $H$ (\emph{i.e.} the graph with vertex set $H$ and edge set obtained by keeping only the edges of $G$ with both end vertices in $H$). We want to compare the CMP inside $H$, \emph{i.e.} $\cmp(H)$, with the trace over $H$ of the CMP on the whole graph $G$ which we denote by
$$
\cmp(G)_{|H} \defeq \{ C \cap H : \, C \in \cmp(G), \, C \cap H \neq \emptyset \}.
$$
There is an easy inclusion:
\begin{prop}
\label{prop:stablefinest}
For every $H \subset V$, we have $\cmp(H) \prec \cmp(G)_{|H}$.
\end{prop}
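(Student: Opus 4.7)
The plan is to verify directly that the trace $\cmp(G)_{|H}$ is itself an $(r_{|H},\alpha)$-admissible partition of the induced subgraph $H$, and then to invoke the minimality clause of Definition~\ref{defCMP} which characterises $\cmp(H)$ as the \emph{finest} such partition.

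First I would fix two distinct clusters of the trace, writing them as $A = C \cap H$ and $B = C' \cap H$ with $C, C' \in \cmp(G)$ distinct. Two elementary monotonicity facts then drive the argument. On the one hand, every path in the induced subgraph $H$ is a path in $G$, so graph distances can only grow under restriction:
\[
d_H(A,B) \;\geq\; d_G(A,B) \;\geq\; d_G(C,C').
\]
On the other hand, the weights can only shrink, since $A \subset C$ and $B \subset C'$ and $r$ is non-negative:
\[
r_{|H}(A) \;\leq\; r(C), \qquad r_{|H}(B) \;\leq\; r(C'),
\]
and because $\alpha \geq 1$, taking the minimum and raising to the power $\alpha$ preserves the inequality:
\[
\min\bigl(r_{|H}(A),\, r_{|H}(B)\bigr)^\alpha \;\leq\; \min\bigl(r(C),\, r(C')\bigr)^\alpha.
\]
Combining these two with the admissibility of $\cmp(G)$ yields
\[
d_H(A,B) \;\geq\; d_G(C,C') \;>\; \min\bigl(r(C),\, r(C')\bigr)^\alpha \;\geq\; \min\bigl(r_{|H}(A),\, r_{|H}(B)\bigr)^\alpha,
\]
which is exactly the admissibility condition \eqref{eq:proplimitCMP} for $\cmp(G)_{|H}$ viewed as a partition of the weighted graph $(H, r_{|H}, \alpha)$.

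Since $\cmp(H)$ is by definition the finest $(r_{|H},\alpha)$-admissible partition of $H$, we immediately conclude $\cmp(H) \prec \cmp(G)_{|H}$, which is the claim. There is no real obstacle here: the proof amounts to the observation that passing to an induced subgraph pushes both quantities entering the admissibility inequality in the favourable direction (distances up, weights down), so admissibility is automatically inherited.
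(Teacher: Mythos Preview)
Your proof is correct and takes a genuinely different route from the paper's. The paper argues via the Cluster Merging Procedure: running the same sequence $(x_n,y_n)$ for $G$ and its subsequence landing in $H^2$ for $H$, monotonicity of the merging operator $M_{x,y}$ forces $\kC^n(H) \prec \kC^n(G)_{|H}$ at every stage, hence in the limit. You instead work directly from Definition~\ref{defCMP}, showing that the trace $\cmp(G)_{|H}$ is itself $(r_{|H},\alpha)$-admissible on $H$ and invoking minimality of $\cmp(H)$.

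Your approach is arguably cleaner: it uses only the defining property of the CMP and the two evident monotonicities (distances grow, weights shrink under restriction), bypassing the algorithmic machinery entirely. The paper's route, by contrast, is a first illustration of how the merging procedure and its monotonicity can be leveraged---a technique reused repeatedly later (e.g.\ in the proof of Proposition~\ref{prop:locality}). One small remark: the clause ``because $\alpha \geq 1$'' is not actually needed for the inequality $\min(r_{|H}(A),r_{|H}(B))^\alpha \leq \min(r(C),r(C'))^\alpha$, since $t\mapsto t^\alpha$ is increasing on $[0,\infty)$ for any $\alpha>0$; but this does no harm.
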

\begin{proof}
Let $(x_n,y_n)_{n \in \N}$ be a sequence of pairs of distinct vertices of $G$ satisfying \eqref{eq:algosuite}. We use it to construct $\cmp(G)$. We can simultaneously construct $\cmp(H)$ by considering only the indexes $n$ such that $(x_n, y_n) \in H^2$ and the result follows from the monotonicity of the merging operator.
\end{proof}
Without additional assumptions, $\cmp(G)_{|H}$ can be strictly coarser than $\cmp(H)$ since clusters growing outside of $H$ can merge with clusters growing inside of $H$ which, in turn, can yield additional merging inside $H$.
\begin{defi}
We say that a subset $H \subset V$ is \emph{stable} (for the CMP in $G$) if
\begin{equation}
\label{eq:stable}
\forall C \in \cmp(H), \quad B(C,r(C)^\alpha) \subset H.
\end{equation}
\end{defi}

\begin{rema}\
\begin{itemize}
\item Being stable is a local property: we only need to look at the weights inside $H$ to compute $\cmp(H)$ and check if it satisfies \eqref{eq:stable}. Thus, it does not depend on the  value of the weights on $G\setminus H$.
\item Since the CMP is defined as the finest $(r,\alpha)$-admissible partition, in order to show that a set $H$ is stable, it suffices to find any $(r,\alpha)$-admissible partition of $H$ satisfying \eqref{eq:stable}.
\end{itemize}
\end{rema}

\begin{figure}[!t]
\begin{center}
\includegraphics[width=12cm]{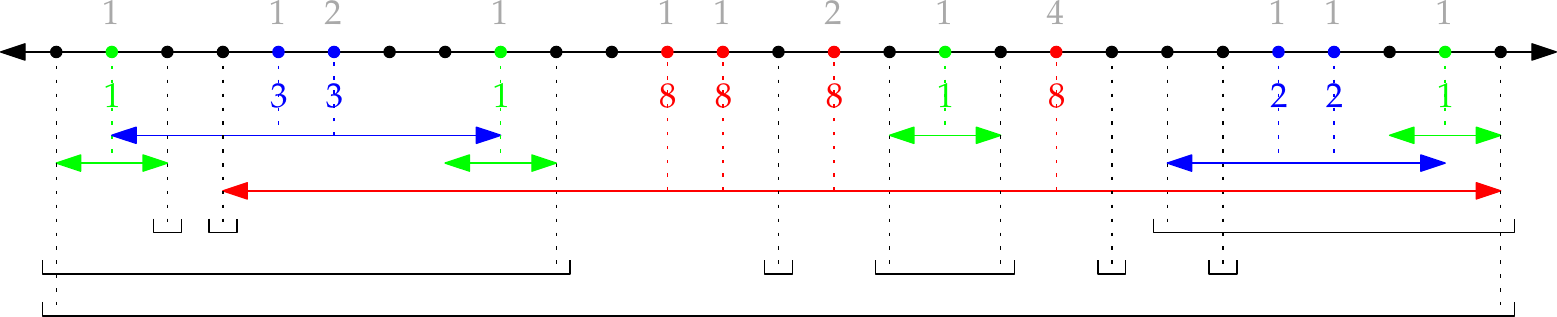}
\caption{\label{fig:stable}Examples of stable sets. Here $G=\Z$ and $\alpha = 1$. The initial weights are displayed in grey above the line (black vertices are given weight $0$). Clusters are each given a color and their zone of influence is materialized by an arrow. Some (but not all) stable sets are materialized by brackets (all the sites inside a bracket form a stable set). Note that the whole region displayed forms a stable set: the output of the CMP inside this region does not depend on the weights outside this region.}
\end{center}
\end{figure}

The following proposition highlights the importance of stable sets as its shows that they are the sets for which the CMP can be split into two separate partitions.

\begin{prop}
\label{prop:locality}
Let $H \subset V$ be stable set. Then
$$
\cmp(H) = \cmp(G)_{|H} \quad and \quad \cmp(G\setminus H) = \cmp(G)_{|G\setminus H}.
$$
Moreover, we have the decomposition
$$
\cmp(G) = \cmp(H) \sqcup \cmp(G\setminus H).
$$
\end{prop}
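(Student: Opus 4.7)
Set $\kD \defeq \cmp(H) \sqcup \cmp(G\setminus H)$, the partition of $V$ whose clusters are exactly those of $\cmp(H)$ (all sitting inside $H$) together with those of $\cmp(G\setminus H)$ (all sitting inside $G\setminus H$). The plan is to prove that $\cmp(G) = \kD$; once this is established, the two restriction identities $\cmp(G)_{|H} = \cmp(H)$ and $\cmp(G)_{|G\setminus H} = \cmp(G\setminus H)$ are immediate, since every cluster of $\cmp(G) = \kD$ sits entirely on one side of the boundary. Proving the equality amounts to showing the two comparisons $\cmp(G) \prec \kD$ and $\kD \prec \cmp(G)$.

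For the first comparison I plan to verify that $\kD$ is $(r,\alpha)$-admissible in $G$; the minimality characterisation (Definition \ref{defCMP}) then forces $\cmp(G) \prec \kD$. Given two distinct clusters $C, C' \in \kD$, three cases arise. If both clusters belong to $\cmp(H)$, or both to $\cmp(G\setminus H)$, the admissibility of this local CMP already gives $d(C,C') > \min(r(C), r(C'))^\alpha$, because the merging operator used to build the local CMP (as in the proof of Proposition \ref{prop:stablefinest}) compares pairs using the ambient distance $d$ inherited from $G$. The real content of the lemma lies in the mixed case $C \in \cmp(H)$, $C' \in \cmp(G\setminus H)$: stability of $H$ gives $B(C, r(C)^\alpha) \subset H$, so $C' \subset G \setminus H$ is disjoint from this ball, whence $d(C, C') > r(C)^\alpha \geq \min(r(C), r(C'))^\alpha$. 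This mixed case is the only place where the stability hypothesis is genuinely used, and is the main obstacle of the argument.

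For the reverse comparison I would apply Proposition \ref{prop:stablefinest} twice: on $H$ it gives $\cmp(H) \prec \cmp(G)_{|H}$, and on $G \setminus H$ it gives $\cmp(G\setminus H) \prec \cmp(G)_{|G\setminus H}$. Each cluster of $\cmp(H)$ (respectively of $\cmp(G\setminus H)$) is therefore contained in the trace on $H$ (respectively on $G\setminus H$) of some cluster of $\cmp(G)$, and hence in a full cluster of $\cmp(G)$. Taking the two sides together shows $\kD \prec \cmp(G)$. Combining with the first comparison yields $\cmp(G) = \kD$, which is the claimed decomposition; the two restriction identities then follow by taking traces on $H$ and on $G\setminus H$.
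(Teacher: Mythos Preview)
Your argument is correct and takes a more direct route than the paper. The paper runs the cluster merging procedure simultaneously on $G$, $H$, and $G\setminus H$ with a common sequence $(x_n,y_n)$ and proves by induction on $n$ that the three evolving partitions agree on each side of the boundary at every step; the heart of the induction is that when $x_n\in H$ and $y_n\in G\setminus H$, stability of $H$ forces $d(x_n,y_n) > r(\kC^n_{x_n}(G))^\alpha$, so no merge occurs across the boundary. You bypass the algorithm entirely: you verify directly that $\kD = \cmp(H)\sqcup\cmp(G\setminus H)$ is $(r,\alpha)$-admissible in $G$ (stability handling the mixed case), invoke the minimality characterisation of $\cmp(G)$ for one comparison, and apply Proposition~\ref{prop:stablefinest} twice for the other. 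Both proofs pivot on the same geometric fact $B(C,r(C)^\alpha)\subset H$ for $C\in\cmp(H)$, but yours leverages Definition~\ref{defCMP} as the finest admissible partition, whereas the paper's argument tracks the algorithmic construction step by step. Your remark that the local CMPs are built with the ambient distance $d$ of $G$ is exactly the point needed for the two ``same-side'' cases of admissibility, and it matches the paper's own convention in the proofs of Propositions~\ref{prop:stablefinest} and~\ref{prop:locality}.
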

We point out that, even though $H$ and $G\setminus H$ seem to play a symmetric role in the proposition, $G\setminus H$ is not necessarily stable when $H$ is. A direct consequence is the following description of a stable set in terms of the CMP on $G$.
\begin{coro}\label{corostable} $H\subset V$ is stable if and only if
$$
\bigcup_{x\in H}B(x,r(\cmp_x(G))^\alpha) = H.
$$
\end{coro}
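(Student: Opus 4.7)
The plan is to reformulate the claimed set identity as a pointwise condition and then prove the equivalence with stability by combining Proposition~\ref{prop:stablefinest} (monotonicity) with Proposition~\ref{prop:locality} (decomposition). Since each $x \in H$ lies trivially in $B(x, r(\cmp_x(G))^\alpha)$, the inclusion $H \subset \bigcup_{x \in H} B(x, r(\cmp_x(G))^\alpha)$ is automatic, so the stated equality is equivalent to
\[
\forall x \in H, \quad B(x, r(\cmp_x(G))^\alpha) \subset H.
\]
The whole problem reduces to checking that this pointwise condition is equivalent to stability of $H$.

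For the direct implication, I would assume $H$ stable and invoke Proposition~\ref{prop:locality}: the decomposition $\cmp(G) = \cmp(H) \sqcup \cmp(G \setminus H)$ forces, for every $x \in H$, the identity $\cmp_x(G) = \cmp_x(H)$ and in particular $r(\cmp_x(G)) = r(\cmp_x(H))$. Applying the stability hypothesis to the cluster $\cmp_x(H) \in \cmp(H)$ then yields $B(x, r(\cmp_x(G))^\alpha) \subset B(\cmp_x(H), r(\cmp_x(H))^\alpha) \subset H$, which is exactly the pointwise condition.

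For the converse, assume the pointwise condition holds and fix any $C \in \cmp(H)$; I need to show $B(C, r(C)^\alpha) \subset H$. By Proposition~\ref{prop:stablefinest}, $\cmp(H)$ is finer than $\cmp(G)_{|H}$, so for every $y \in C$ one has $C \subset \cmp_y(G) \cap H$, hence $r(C) \leq r(\cmp_y(G))$ since weights are non-negative. Combined with the hypothesis $B(y, r(\cmp_y(G))^\alpha) \subset H$, this gives $B(y, r(C)^\alpha) \subset H$. Taking the union over $y \in C$ and using $B(C, r(C)^\alpha) = \bigcup_{y \in C} B(y, r(C)^\alpha)$ delivers the required inclusion.

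No step is genuinely hard; the whole argument is a short manipulation once the pointwise reformulation is in place. The only point that demands a little care is in the converse, where Proposition~\ref{prop:stablefinest} furnishes only a refinement (not an equality) of partitions, so $r(C) \leq r(\cmp_y(G))$ may be strict; this inequality on influence radii is nevertheless all that is needed to run the ball-enlargement step.
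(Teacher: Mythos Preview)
Your proof is correct and is essentially the natural expansion of what the paper leaves implicit (the paper only states that the corollary is a ``direct consequence'' of Proposition~\ref{prop:locality} without spelling out the argument). In particular, your use of Proposition~\ref{prop:locality} for the forward direction and Proposition~\ref{prop:stablefinest} for the converse is exactly what is needed, and your observation that only the inequality $r(C)\le r(\cmp_y(G))$ (rather than equality) is required for the converse is the right point of care.
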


\begin{proof}[Proof of Proposition \ref{prop:locality}] Fix a sequence $(x_n,y_n)$ of pairs of vertices of $G$ satisfying \eqref{eq:algosuite}. We start from the finest partition and simultaneously build three sequences of partitions $(\kC^n(G))$ , $(\kC^n(H))$ and $(\kC^n(G\setminus H))$ using the merging procedure:
\begin{align}
\kC^{n+1}(G) & \defeq  M_{x_n,y_n} (\kC^n(G)); \\
\kC^{n+1}(H) & \defeq
\begin{cases}
M_{x_n,y_n} (\kC^n(H)) &\text{if $x_n$ and $y_n\in H$,}\\
\kC^n(H) &\text{otherwise;}
\end{cases} \\
\kC^{n+1}(G\setminus H) & \defeq
\begin{cases}
M_{x_n,y_n} (\kC^n(G\setminus H)) &\text{if $x_n$ and $y_n\in G\setminus H$,}\\
\kC^n(G \setminus H)&\text{otherwise.}
\end{cases}
\end{align}
These sequences converge respectively towards $\cmp(G)$, $\cmp(H)$ and $\cmp(G\setminus H)$. Therefore, we just need to show that for each $n$,
\begin{equation}\label{inducstable1}
\kC^{n}_z(G) = \kC^{n}_z(H) \quad\hbox{for every $z\in H$,}
\end{equation}
and that similarly,
\begin{equation}\label{inducstable2}
\kC^{n}_z(G) = \kC^{n}_z(G\setminus H) \quad\hbox{for every $z\in G\setminus H$.}
\end{equation}
Of course, we prove this by induction on $n$. Clearly, \eqref{inducstable1} and \eqref{inducstable2} hold for $n=0$ because we start from the finest partition. Assume that these equalities hold for $n$. If $x_n$ and $y_n$ are both in $H$ or both in $G\setminus H$,  then clearly the recurrence hypothesis still holds for $n+1$. It remains to check that, if $x_n\in H$ and $y_n \in G\setminus H$ (the other case is symmetric), then no merging occurs in $\kC^{n+1}(G)$. To see this, we compute
\begin{align*}
d(\kC^n_{x_n}(G) , \kC^n_{y_n}(G) ) & \geq d (\kC^n_{x_n}(G)  , G\setminus H )\\
& = d (\kC^n_{x_n}(H)  , G\setminus H ) && \text{[rec. hypothesis]} \\
& \geq d (\cmp_{x_n}(H), G\setminus H) && \text{[$\kC^n_{x_n}(H) \prec \cmp_{x_n}(H)$]} \\
& > r(\cmp_{x_n}(H))^\alpha && \text{[$H$ is stable]} \\
& \geq  r(\kC^{n}_{x_n}(H))^\alpha && \text{[$\kC^n_{x_n}(H) \prec \cmp_{x_n}(H)$]} \\
& = r(\kC^{n}_{x_n}(G))^\alpha. && \text{[rec. hypothesis]}
\end{align*}
Thus $d(x_n,y_n) > \min(r(\kC^n_{x_n}(G)) , r(\kC^n_{y_n}(G)))^\alpha$ which tells us that the clusters do not merge.
\end{proof}
\begin{prop}
\label{prop:unionstable} Let $(H_i, i\in I)$ be a family of stable subsets of $V$. Then
\[
\hat H = \bigcap_{i \in I} H_i \quad\hbox{ and }\quad \check H = \bigcup_{i \in I} H_i
\]
are also stable.
\end{prop}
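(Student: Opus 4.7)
The plan is to reduce both claims to the pointwise reformulation of stability given by Corollary \ref{corostable}. Since every ball $B(x, r(\cmp_x(G))^\alpha)$ automatically contains $x$, the equality $\bigcup_{x \in H} B(x, r(\cmp_x(G))^\alpha) = H$ in that corollary is equivalent to the one-sided condition
\[
\forall x \in H, \qquad B(x, r(\cmp_x(G))^\alpha) \subset H.
\]
Working with this pointwise characterisation lets us avoid having to analyse the internal structure of the CMPs on $\hat H$ or $\check H$, and instead transport information site by site from each $H_i$ to its intersection or its union.

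For the intersection $\hat H = \bigcap_{i \in I} H_i$, fix $x \in \hat H$. Then $x \in H_i$ for every $i$, so applying the corollary to each stable $H_i$ gives $B(x, r(\cmp_x(G))^\alpha) \subset H_i$; intersecting over $i \in I$ yields $B(x, r(\cmp_x(G))^\alpha) \subset \hat H$, and a second application of the corollary shows that $\hat H$ is stable. For the union $\check H = \bigcup_{i \in I} H_i$, fix $x \in \check H$ and choose any $i_0 \in I$ with $x \in H_{i_0}$. Stability of $H_{i_0}$ gives directly $B(x, r(\cmp_x(G))^\alpha) \subset H_{i_0} \subset \check H$, and the corollary again yields stability.

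The only point that might look delicate is the union case. If one tried to work intrinsically, a cluster $C \in \cmp(\check H)$ could in principle be obtained by merging clusters coming from distinct $\cmp(H_i)$'s, so its influence radius $r(C)^\alpha$ could exceed that of any cluster of any individual $\cmp(H_i)$ and bounding $B(C, r(C)^\alpha)$ by an argument localised inside a single $H_i$ would be hopeless. Routing through the ambient $\cmp(G)$ via Corollary \ref{corostable} side-steps this issue entirely, since the relevant radii there are taken with respect to clusters of the full graph $G$, which by hypothesis are already known to fit inside each $H_i$ through $x$. I expect no substantive obstacle beyond invoking the corollary correctly.
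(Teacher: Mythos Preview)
Your proof is correct and follows essentially the same approach as the paper's: both arguments reduce to the characterisation of stability in terms of the ambient clusters $\cmp_x(G)$ (Corollary \ref{corostable}), the only cosmetic difference being that the paper phrases the verification cluster by cluster in $\cmp(G)$ whereas you phrase it vertex by vertex.
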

\begin{proof}
Because each $H_i$ is a disjoint union of clusters of $\cmp(G)$, so are $\hat H$ and $\check H$. Fix $C \in \cmp(G)$. If $C \subset \check H$, there exists $i_0$ such that $C \subset H_{i_0}$ therefore $B(C, r(C)^\alpha) \subset H_{i_0} \subset \check H$ hence $\check H$ is stable. Similarly, if $C \subset \hat H$, then $B(C, r(C)^\alpha) \subset H_i$ for every $i\in I$ thus $B(C,r(C)^\alpha) \subset \hat H$ and $\hat H$ is also stable.
\end{proof}

 The following proposition provides a method for constructing large stable sets from smaller ones by ``dilution'': if a set is surrounded by stable subsets that are large enough, then the union of these sets is again stable. We will use this idea extensively in the next section to prove the existence of a sub-critical regime for the CMP on random graphs.
\begin{prop}\label{prop:etouffement} Let $W \subset \widetilde{W} \subset V$. Assume that
$$
\widetilde{W} \setminus W \hbox{ is a stable set}
$$
and that
$$
B(W,r(W)^\alpha) \subset \widetilde{W}.
$$
Then $\widetilde{W}$ is a stable set.
\end{prop}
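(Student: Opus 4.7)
The plan is to decompose $\cmp(\widetilde{W})$ via Proposition \ref{prop:locality}, but applied to the induced subgraph $\widetilde{W}$ as the ambient graph (rather than $G$), taking $\widetilde{W}\setminus W$ as the stable subset. Once the decomposition
$$\cmp(\widetilde{W}) = \cmp(\widetilde{W}\setminus W) \sqcup \cmp(W)$$
is available, the stability of $\widetilde{W}$ in $G$ will follow by a simple case split on which of the two pieces a given cluster belongs to, one case being handled by each of the two hypotheses.

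The first step is to verify that $\widetilde{W}\setminus W$ is stable when viewed as a subset of $\widetilde{W}$, so that Proposition \ref{prop:locality} can actually be invoked inside $\widetilde{W}$. Stability is a property relative to an ambient graph because its definition involves balls in that graph, but distances only grow when one restricts to an induced subgraph. Hence, for any $C \in \cmp(\widetilde{W}\setminus W)$, the ball of radius $r(C)^\alpha$ around $C$ computed in $\widetilde{W}$ is contained in the corresponding ball computed in $G$, which by hypothesis lies in $\widetilde{W}\setminus W$. Note that $\cmp(\widetilde{W}\setminus W)$ itself does not depend on this choice of ambient graph, since the CMP only sees the induced subgraph structure.

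With Step 1 in hand, applying Proposition \ref{prop:locality} inside $\widetilde{W}$ yields the decomposition above. To conclude, I fix an arbitrary cluster $C' \in \cmp(\widetilde{W})$ and check that $B(C',r(C')^\alpha) \subset \widetilde{W}$ (ball in $G$). If $C' \in \cmp(\widetilde{W}\setminus W)$, the conclusion is immediate from the first hypothesis, since $\widetilde{W}\setminus W \subset \widetilde{W}$. If instead $C' \in \cmp(W)$, then $C' \subset W$ forces $r(C') \leq r(W)$, and the monotonicity of balls in both the center and the radius gives
$$B(C',r(C')^\alpha) \subset B(W,r(W)^\alpha) \subset \widetilde{W}$$
by the second hypothesis. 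The only genuinely non-trivial point is the ambient-graph bookkeeping of Step 1; the rest is a clean case analysis exploiting the locality result.
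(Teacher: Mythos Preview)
Your proof is correct and follows essentially the same approach as the paper's: both arguments hinge on the decomposition $\cmp(\widetilde{W}) = \cmp(\widetilde{W}\setminus W) \sqcup \cmp(W)$ and then handle the two kinds of clusters separately via the two hypotheses. The paper is terser and does not explicitly cite Proposition \ref{prop:locality}, whereas you do, and you are more careful about the ambient-graph bookkeeping in Step~1 (noting that stability of $\widetilde{W}\setminus W$ in $G$ implies stability in $\widetilde{W}$ because distances only increase when passing to an induced subgraph), a point the paper glosses over.
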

\begin{proof}
We simply observe that any cluster inside $W$ has an influence radius bounded above by $r(W)^\alpha$ and therefore cannot reach outside of $\widetilde{W}$. Since the subset $\widetilde{W} \setminus W$ is stable, its clusters cannot merge with clusters inside $W$ and we find that
\begin{eqnarray*}
\bigcup_{x\in \widetilde{W}}B(x,r(\cmp_x(\widetilde{W}))^\alpha) &=&
\bigcup_{x\in \widetilde{W}\setminus W}B(x,r(\cmp_x(\widetilde{W}\setminus W))^\alpha) \cup \bigcup_{x\in W}B(x,r(\cmp_x(W))^\alpha)\\
&\subset& (\widetilde{W}\setminus W)\cup B(W,r(W)^\alpha)\\
&\subset& \widetilde{W}
\end{eqnarray*}
which shows that $\widetilde{W}$ is stable.
\end{proof}
The whole vertex set $V$ is always stable. Since stable sets are stable under intersection, it is natural to consider the smallest stable set containing a given subset.
\begin{defi} For $W \subset V$, we call \emph{stabiliser} of $W$ and denote by $\kS_W$ the smallest stable set containing $W$. It is the intersection of all stable sets containing $W$:
$$
\kS_W \defeq \bigcap_{
\substack{
\text{$H$ stable}\\
W \subset H}
} H.
$$
\end{defi}
When considering stabilisers of single vertices, we use the notation $\kS_x$ instead of $\kS_{\{x \} }$. Figure \ref{fig:stabilisators} shows some examples of stabilisers. We start by collecting some basic properties of these sets.
\begin{prop}
\label{prop:stablemisc}
Stabilisers have the following properties:
\begin{enumerate}
\item $\kS_x = \kS_{\cmp_x}$ for any $x\in V$.
\item $\kS_W \supset \cup_{x \in W} \cmp_x$.
\item $\kS_{\kS_W} = \kS_W$ and if $W_1 \subset W_2$ then $\kS_{W_1} \subset \kS_{W_2}$.
\item $\kS_{\cup_{i \in I} W_i} = \cup_{i \in I} \kS_{W_i}$ and $\kS_{\cap_{i \in I} W_i} \subset \cap_{i \in I} \kS_{W_i}$.
\item For $x \in V$, if $\kS_x = \cmp_x$, then either $r(x) < 1$ and $\kS_x = \{x\}$, or $\kS_x = V$.
\end{enumerate}
\end{prop}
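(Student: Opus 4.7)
The proof is an unfolding of the definition combined with two tools from the preceding material: Proposition \ref{prop:locality} (which implies every stable set is a disjoint union of CMP clusters, since $\cmp(G) = \cmp(H) \sqcup \cmp(G \setminus H)$) and Proposition \ref{prop:unionstable} (stability is preserved by unions and intersections). The plan is to handle the five items in the order they appear, using earlier items to shorten later ones.

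For (1), I would observe that by the cluster-decomposition consequence of Proposition \ref{prop:locality}, any stable $H$ containing $x$ must contain the whole cluster $\cmp_x$; hence the families $\{H \text{ stable} : x \in H\}$ and $\{H \text{ stable} : \cmp_x \subset H\}$ coincide and their intersections are equal. For (2), $\kS_W$ is itself stable (this is Proposition \ref{prop:unionstable} applied to the defining intersection, combined with the obvious fact that $V$ is stable so the family is nonempty), so by the same cluster-decomposition it contains $\cmp_x$ for every $x \in W$. For (3), $\kS_W$ is a stable set containing itself, giving $\kS_{\kS_W} \subset \kS_W$; the reverse inclusion is the trivial half of (2) applied to $W' = \kS_W$. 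The monotonicity claim is immediate: if $W_1 \subset W_2$ then $\kS_{W_2}$ is a stable set containing $W_1$, so $\kS_{W_1} \subset \kS_{W_2}$.

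For (4), the union formula is the interesting one. The inclusion $\kS_{\bigcup_i W_i} \supset \bigcup_i \kS_{W_i}$ follows from monotonicity in (3). For the other direction, I would invoke Proposition \ref{prop:unionstable} to conclude that $\bigcup_i \kS_{W_i}$ is stable; since it contains $\bigcup_i W_i$, minimality of the stabiliser gives $\kS_{\bigcup_i W_i} \subset \bigcup_i \kS_{W_i}$. The inclusion for intersections is strictly easier and only needs the same Proposition plus monotonicity, with no expectation of equality.

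The main (though still short) obstacle is (5), where the heuristic content actually lives. Assume $\kS_x = \cmp_x$. If $r(x) < 1$, Proposition \ref{prop:easycmp}.\ref{easypropcmp1} gives $\cmp_x = \{x\}$ and one checks directly that $\{x\}$ is stable (its only cluster has influence radius $r(x)^\alpha < 1$, and $B(x, r(x)^\alpha) = \{x\}$), so $\kS_x = \{x\}$. Otherwise $r(x) \geq 1$, hence $r(\cmp_x) \geq 1$; because $\cmp_x = \kS_x$ is stable, applying Proposition \ref{prop:locality} to $H = \cmp_x$ yields $\cmp(H) = \{\cmp_x\}$, and the stability condition \eqref{eq:stable} for this single cluster reads $B(\cmp_x, r(\cmp_x)^\alpha) \subset \cmp_x$. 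Since $r(\cmp_x)^\alpha \geq 1$, this forces every neighbour (in $G$) of every vertex of $\cmp_x$ to lie in $\cmp_x$; connectedness of $G$ then gives $\cmp_x = V$, completing the proof.
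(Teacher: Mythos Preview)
Your proof is correct and follows essentially the same approach as the paper's own proof: items (1)--(2) via the fact that stable sets are unions of CMP clusters, (3) trivially, (4) from Proposition \ref{prop:unionstable} plus minimality, and (5) from connectedness of $G$. The paper dispatches all five points in three sentences, whereas you spell out the details (in particular invoking Proposition \ref{prop:locality} explicitly to identify $\cmp(\cmp_x)=\{\cmp_x\}$ in (5)); the underlying arguments are the same.
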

\begin{proof} 1. and 2. follow from the fact that a stable set is a disjoint union of clusters of the CMP. Statement 3. is trivial and 4. follows from Proposition \ref{prop:unionstable} and the minimality of stabilisers. The last statement
is a consequence of the connectedness of $G$.
\end{proof}

\begin{prop}\label{prop:expballsstab} Let $W \in V$. Define the growing sequence of subsets $(S^n)$ of $V$ by
$$
S^0 \defeq W \quad\hbox{and}\quad
S^{n+1} \defeq \bigcup_{x\in S^n}B(x,r(\cmp_x)^\alpha).
$$
Then, we have
$$
\kS_W = \lim_n\uparrow S^n.
$$
As a consequence, if $C\in \cmp$, then $\kS_{C}$ is a connected set (although $C$ itself need not be connected).
\end{prop}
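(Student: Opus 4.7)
The plan is to prove the two inclusions $S^\infty \subset \kS_W$ and $\kS_W \subset S^\infty$ separately, where $S^\infty := \bigcup_{n \geq 0} S^n$, and then derive the connectedness statement by specialising $W = C$.

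For $S^\infty \subset \kS_W$, I would proceed by induction to show $S^n \subset \kS_W$. The base case $S^0 = W \subset \kS_W$ is immediate. For the inductive step, suppose $S^n \subset \kS_W$ and pick any $x \in S^n$. Because intersections of stable sets are stable (Proposition \ref{prop:unionstable}), $\kS_W$ is itself stable, so Corollary \ref{corostable} applied to $H = \kS_W$ gives $B(x, r(\cmp_x)^\alpha) \subset \kS_W$. Taking unions over $x \in S^n$ yields $S^{n+1} \subset \kS_W$.

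For the reverse inclusion, it suffices by minimality of the stabiliser to verify that $S^\infty$ is itself stable and contains $W$. Containment of $W = S^0$ is obvious. For stability, I would once more invoke Corollary \ref{corostable}: for any $x \in S^\infty$, pick $n$ with $x \in S^n$; then by construction $B(x, r(\cmp_x)^\alpha) \subset S^{n+1} \subset S^\infty$. This gives $\bigcup_{x \in S^\infty} B(x, r(\cmp_x)^\alpha) \subset S^\infty$, the reverse inclusion being trivial, so $S^\infty$ is stable and $\kS_W \subset S^\infty$.

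For the connectedness of $\kS_C$ with $C \in \cmp$, I would specialise to $W = C$. Since $\cmp_x = C$ for every $x \in C$, the first iteration computes
\[
S^1 = \bigcup_{x \in C} B(x, r(C)^\alpha) = B(C, r(C)^\alpha),
\]
which is connected by assertion \eqref{connectedcluster} of Proposition \ref{prop_diam_weight}. A straightforward induction then shows that each $S^n$ with $n \geq 1$ is connected: $S^{n+1}$ is obtained from $S^n$ by adjoining the graph-balls $B(x, r(\cmp_x)^\alpha)$ indexed by $x \in S^n$, each such ball being connected in $G$ and meeting $S^n$ at its centre $x$. As an increasing union of connected sets all containing $S^1$, the stabiliser $\kS_C = S^\infty$ is therefore connected. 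The only substantive ingredient is the connectedness of $B(C, r(C)^\alpha)$ supplied by Proposition \ref{prop_diam_weight}; the rest is bookkeeping on top of Corollary \ref{corostable}, with Proposition \ref{prop:locality} implicitly guaranteeing that $\cmp_x$ is interpretation-free whenever $x$ lies in a stable set containing $\cmp_x$.
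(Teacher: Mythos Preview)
Your proof is correct and follows essentially the same route as the paper's: both show $S^n \subset \kS_W$ by induction via Corollary \ref{corostable}, verify that $S^\infty$ is stable directly from its defining recursion, and conclude equality by minimality of the stabiliser; the connectedness argument via Proposition \ref{prop_diam_weight} for $S^1$ and inductive attachment of balls is also identical. The closing remark about Proposition \ref{prop:locality} is unnecessary, since $\cmp_x$ always denotes the cluster in $\cmp(G)$ here, but it does no harm.
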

\begin{proof}
Let $x\in \lim_n\uparrow S^n$. Then, $x\in S^{n_0}$ for $n_0$ large enough hence $B(x,r(\cmp_x)^\alpha)\in S^{n_0+1} \subset \lim_n\uparrow S^n$. This means that $\lim_n\uparrow S^n$ is stable. On the other hand, by a trivial induction argument, Corollary \ref{corostable} gives that $S^n \subset \kS_W$ for all $n$. Consequently $\lim_n\uparrow S^n \subset \kS_W$ and equality follows by minimality of stabilisers. The fact that $\kS_C$ is connected for $C\in \cmp$ follows from Proposition \ref{prop_diam_weight} which ensures that $S^1$ is connected. Then $S^n$ remains connected for all $n$ since we grow these sets by adding adjacent connected sets.
\end{proof}

\begin{figure}[!t]
\begin{center}
\includegraphics[width=14cm]{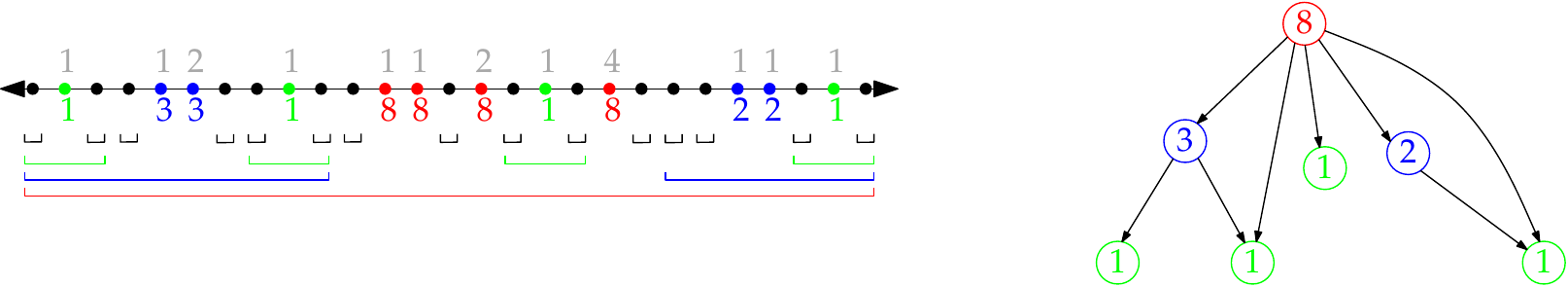}
\caption{\label{fig:stabilisators}
Examples of stabilisers. The weighted graph is the same as in Figure \ref{fig:stable}. On the left, the stabilisers of each clusters inside the region are materialized by brackets. On the right, the corresponding oriented graph $(\cmp,\cdot\mapsto\cdot)$. Vertices and arrows pointing to black clusters of null radii are not represented. Notice that the leftmost cluster of size $1$ is not a direct descendent of the cluster of size $8$.}
\end{center}
\end{figure}

A direct consequence of the previous proposition, stated in Corollary \ref{coro:stabdescend}, is that there is a nice interpretation of stabilisers of clusters in terms of the partial order $\vartriangleright$ defined in Section \ref{sec:orderoriented}. It shows in particular that these stabilisers ``pile over each other'' as illustrated in Figure \ref{fig:stabilisators}.
\begin{coro}\label{coro:stabdescend}
For any $C\in \cmp$, we have
$$
\kS_C = C\sqcup \left( \bigsqcup_{C \vartriangleright C'}C' \right)
$$
\emph{i.e.} the stabiliser of a cluster $C$ is exactly the subset composed of $C$ together with all its descendants in the oriented graph $(\cmp,\cdot \mapsto \cdot)$.
As a consequence, for any $C , C' \in \cmp$ we are in one of the three cases:
$$
\kS_C \subset \kS_{C'},\qquad \kS_C \supset \kS_{C'},\qquad \kS_C \cap \kS_{C'} = \emptyset.
$$
Moreover, we have the equivalence
$$
\kS_C = \kS_{C'} \quad \Longleftrightarrow\quad C=C'.
$$
\end{coro}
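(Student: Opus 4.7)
My plan is to first establish the explicit formula $\kS_C = C \sqcup \bigsqcup_{C \vartriangleright C'} C'$ and then to deduce the remaining claims as consequences of this description combined with the properties of the partial order $(\cmp,\vartriangleright)$. For the formula I will prove the two inclusions separately, using the iterative characterisation of stabilisers from Proposition~\ref{prop:expballsstab} together with the locality statement in Proposition~\ref{prop:locality}.

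For the inclusion $\supset$, I proceed by induction on the length of an oriented chain $C = C_0 \mapsto C_1 \mapsto \dots \mapsto C_k = C'$. The base case $C \subset \kS_C$ is definitional. For the inductive step, once $C_{i-1} \subset \kS_C$ is known, Proposition~\ref{prop:locality} identifies $C_{i-1}$ with a cluster of $\cmp(\kS_C)$ (since $\kS_C$ is stable and is a disjoint union of clusters of $\cmp(G)$), and stability then forces $B(C_{i-1}, r(C_{i-1})^\alpha) \subset \kS_C$. The relation $C_{i-1} \mapsto C_i$ gives $d(C_{i-1}, C_i) \leq r(C_{i-1})^\alpha$, so $C_i$ meets that ball; since $\kS_C$ is a union of clusters of $\cmp(G)$, the whole cluster $C_i$ lies inside $\kS_C$.

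For the inclusion $\subset$, I use the representation $\kS_C = \lim_n \uparrow S^n$ with $S^0 = C$ and $S^{n+1} = \bigcup_{x \in S^n} B(x, r(\cmp_x)^\alpha)$ and show by induction that every $S^n$ is contained in $T \defeq C \cup \bigcup_{C \vartriangleright C'} C'$. Given $y \in S^{n+1}$, write $y \in B(x, r(\cmp_x)^\alpha)$ with $x \in S^n$; by the inductive hypothesis the cluster $\cmp_x$ satisfies $\cmp_x = C$ or $C \vartriangleright \cmp_x$, so $\cmp_x \subset T$. If $\cmp_y = \cmp_x$ we are done; otherwise $d(\cmp_x,\cmp_y) \leq d(x,y) \leq r(\cmp_x)^\alpha$ with $\cmp_x \neq \cmp_y$ is precisely the condition $\cmp_x \mapsto \cmp_y$, and transitivity of $\vartriangleright$ gives $C \vartriangleright \cmp_y$, so $\cmp_y \subset T$ and hence $y \in T$.

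The remaining statements now follow from this explicit formula. For the trichotomy, if $\kS_C \cap \kS_{C'}$ is non-empty it contains a cluster $D$ (both stabilisers are unions of clusters), and one treats the cases $D = C$, $D = C'$, or $D$ a common proper descendant; in each case combining the formula with transitivity of $\vartriangleright$ places one of $C, C'$ in the stabiliser of the other and yields the corresponding inclusion of stabilisers. The equivalence $\kS_C = \kS_{C'} \Leftrightarrow C = C'$ is then immediate: an equality with $C \neq C'$ would by the formula force $C \vartriangleright C'$ and $C' \vartriangleright C$ simultaneously, producing an oriented cycle ruled out by Proposition~\ref{prop:orientedmisc}. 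I expect the main delicate point to be the reverse inclusion in the formula, specifically in verifying that a new vertex entering $S^{n+1}$ through a ball of radius $r(\cmp_x)^\alpha$ has its cluster either equal to $\cmp_x$ or directly connected to it by $\mapsto$; this is exactly where the precise definition of $\mapsto$ with the weak inequality $\leq$, together with admissibility of $\cmp$, is used.
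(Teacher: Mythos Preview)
Your derivation of the formula $\kS_C = C \sqcup \bigsqcup_{C \vartriangleright C'} C'$ is correct and is exactly what the paper has in mind when it labels the corollary a direct consequence of Proposition~\ref{prop:expballsstab}: the iterative description $\kS_C = \lim_n \uparrow S^n$ is the natural tool for the inclusion $\subset$, and stability together with Corollary~\ref{corostable} (or, as you do, Proposition~\ref{prop:locality}) gives $\supset$. Your proof of the equivalence $\kS_C = \kS_{C'} \Leftrightarrow C = C'$ from the formula is also fine.

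There is, however, a genuine gap in your argument for the trichotomy. In the case where the common cluster $D \subset \kS_C \cap \kS_{C'}$ satisfies $C \vartriangleright D$ and $C' \vartriangleright D$ with $D \notin \{C,C'\}$, you assert that ``transitivity of $\vartriangleright$ places one of $C,C'$ in the stabiliser of the other''. But a common strict descendant in a partial order does not force comparability of $C$ and $C'$, and the formula gives no further leverage. In fact this step cannot be repaired because the trichotomy as stated is false: take $G = \Z$, $\alpha = 1$, weights $r(-1) = r(1) = 3/2$ and $r(x)=0$ otherwise. Every cluster is a singleton; $C=\{1\}$ and $C'=\{-1\}$ are incomparable for $\vartriangleright$ (since $d(C,C')=2>3/2$), yet both point to $\{0\}$. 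One finds $\kS_C=\{0,1,2\}$ and $\kS_{C'}=\{-2,-1,0\}$, which meet at $\{0\}$ without either containing the other. So the obstacle in your third case lies in the statement itself, not in your method; the formula and the final equivalence survive this example, but the nesting claim does not.
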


We can now state the main result of this section which will be instrumental in studying the existence of an infinite cluster in the CMP. The proof is straightforward but its usefulness promotes it to the rank of theorem.
\begin{theo}\label{theo:equivInfiniteCluster}[\textbf{\textup{Criterion for the existence of an infinite cluster}}]
Suppose that $G$ is an infinite graph. For every $x \in V$, the following statements are equivalent:
\begin{enumerate}
\item $|\cmp_x| = \infty$;
\item $|\kS_x| = \infty$;
\item $\kS_x = V$.
\end{enumerate}
Equivalently, the partition $\cmp$ has no infinite cluster if and only if there exists an increasing sequence $(S_n)$ of stable subsets such that $\lim\uparrow S_n = V$.
\end{theo}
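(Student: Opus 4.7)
The plan is to prove the cyclic chain of implications $(1) \Rightarrow (3) \Rightarrow (2) \Rightarrow (1)$, exploiting the explicit description of stabilisers from Proposition \ref{prop:expballsstab} and Corollary \ref{coro:stabdescend} together with the hierarchy of the oriented graph $(\cmp, \cdot \mapsto \cdot)$. The guiding intuition is simple: an infinite cluster has infinite weight and hence an infinite influence radius, so its stabiliser must swallow the whole graph in one step; conversely, a finite cluster has only finitely many descendants in $(\cmp, \cdot \mapsto \cdot)$, all of which are themselves finite, so its stabiliser remains finite.

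For $(1) \Rightarrow (3)$, suppose $|\cmp_x| = \infty$. Proposition \ref{prop:easycmp}(3) gives $r(\cmp_x) = +\infty$, so the very first step of the iterative construction of $\kS_x$ from Proposition \ref{prop:expballsstab} already produces $S^1 = B(x, r(\cmp_x)^\alpha) = V$, using connectedness of $G$; therefore $\kS_x = V$. The implication $(3) \Rightarrow (2)$ is immediate since $V$ is infinite by assumption.

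The substantive step is $(2) \Rightarrow (1)$, which I would prove by contraposition. Assume $|\cmp_x| < \infty$. By Corollary \ref{coro:stabdescend}, $\kS_x = \cmp_x \sqcup \bigsqcup_{\cmp_x \vartriangleright C'} C'$, so it suffices to show that (a) every descendant $C'$ of $\cmp_x$ is finite and (b) there are only finitely many such descendants. For (a), the admissibility condition (equivalently, iterating Proposition \ref{prop:orientedmisc}(3)) yields $r(C') < r(\cmp_x) < \infty$ along any descending path, and Proposition \ref{prop:easycmp}(3) then gives $|C'| < \infty$. For (b), Proposition \ref{prop:orientedmisc}(4) caps the length of every descending path by $\lfloor r(\cmp_x)^\alpha \rfloor$, while Proposition \ref{prop:orientedmisc}(1) combined with the local finiteness of $G$ bounds the out-degree at every node (each $r(C')$ being finite by (a)); the descendant tree is therefore finitely branching with bounded depth, hence finite, so $\kS_x$ is a finite union of finite clusters.

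For the reformulation, the $(\Rightarrow)$ direction follows by picking an enumeration $V = \{y_1, y_2, \ldots\}$ and setting $S_n = \kS_{y_1} \cup \cdots \cup \kS_{y_n}$: each $S_n$ is stable by Proposition \ref{prop:unionstable}, the sequence is increasing, $y_n \in S_n$ ensures $\lim \uparrow S_n = V$, and the equivalence just established further guarantees that every $S_n$ is finite. Conversely, any $x \in V$ lies in some finite $S_n$; since $S_n$ is stable, the minimality of $\kS_x$ forces $\kS_x \subset S_n$, and then $(1) \Leftrightarrow (2)$ yields $|\cmp_x| < \infty$. The main obstacle, as expected, sits in step $(2) \Rightarrow (1)$, in the bookkeeping for the descendant tree; but everything has been packaged in the already-proved lemmas, so no genuinely new idea is required.
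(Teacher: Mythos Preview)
Your proof is correct and follows essentially the same route as the paper's: both arguments hinge on Corollary~\ref{coro:stabdescend} together with the finiteness properties of the oriented graph $(\cmp,\cdot\mapsto\cdot)$ from Proposition~\ref{prop:orientedmisc} for the direction $\neg(1)\Rightarrow\neg(2)$, and on $r(\cmp_x)=\infty$ (Proposition~\ref{prop:easycmp}(3)) for $(1)\Rightarrow(3)$. Your write-up is in fact slightly more explicit than the paper's, which compresses the K\"onig-type bookkeeping into a single reference to Proposition~\ref{prop:orientedmisc} and does not spell out the reformulation at all; you also organise the equivalence as a cycle $(1)\Rightarrow(3)\Rightarrow(2)\Rightarrow(1)$ rather than two separate biconditionals, but this is purely cosmetic.
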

\begin{proof}
Since $\cmp_x \subset \kS_x$, if $\kS_x$ is finite then so is $\cmp_x$. Reciprocally, suppose that $\cmp_x$ is finite, then $\{C\in\cmp\; :\;  \cmp_x\vartriangleright C\}$ is a finite set (of finite clusters) according to Proposition \ref{prop:orientedmisc}. Consequently, Corollary \ref{coro:stabdescend} implies that $\kS_x$ being the union of $\cmp_x$ and all these sets is also finite. Moreover, in this case we have $\kS_x \neq V$ since the graph is infinite. Finally, if $\cmp_x$ is infinite, then Item \ref{easypropcmp3} of Proposition \ref{prop:easycmp} asserts that its influence radius is infinite hence $\kS_x \supset B(\cmp_x,r(\cmp_x)^\alpha) = V$.
\end{proof}

\begin{rema} As we already noticed, changing the weight of a single vertex can have a macroscopic effect on the CMP. For example it can create or destroy the infinite cluster. Theorem \ref{theo:equivInfiniteCluster} tells us that, on the other hand, there is still some locality in the CMP: clusters cannot "grow" from infinity. This essential feature of the model comes from the fact that we take the minimum of both radius in the definition of admissible partitions. Therefore, for two clusters to merge, both must reach to the other one. Most of  the properties of the CMP described in this section would fail if, instead partitions satisfying \eqref{eq:proplimitCMP}, we were to consider a definition of admissibility where the minimum of the radii of the clusters is replaced by either the maximum or the sum of the radii.
\end{rema}

We now give an algorithm that explores the graph starting from a given vertex $x_0$ and reveals $\kS_{x_0}$ together with $\cmp_{|\kS_{x_0}}$ while never looking at the weight of any vertex outside of $\kS_{x_0}$. In particular, the algorithm stops after a finite number of steps if and only if the cluster containing $x_0$ is finite. The fact that this algorithm works as intended is again a direct consequence of Propositions \ref{prop:locality} and \ref{prop:expballsstab}.

\begin{SEAenv}\
\begin{enumerate}
\item Start from the set of vertices $H^0 \defeq \{x_0\}$ and the partition $\kC^0 \defeq \{ \{x_0\} \}$.
\item At the $n$-th iteration of the algorithm, we have a finite set of vertices $H^n$ and a partition of this set $\kC^n$. To go to the next step, we define
$$
H^{n+1} \defeq \bigcup_{z \in H^n} B(z,r(\kC^n_z)^\alpha) \quad \hbox{ and } \kC^{n+1} = \cmp(H^{n+1}).
$$
(in practical implementations, the partition $\kC^{n+1}$ is obtained by running the cluster merging procedure starting from the partition $\kC^n\cup\{ \{z\},z\in H^{n+1}\setminus H^n \}$ instead of the finest partition on $H^{n+1}$ so that merges from previous iterations are not repeated at each step).
\item If $H^{n+1} = H^n$, then the algorithm stops and outputs $\kS_{x_0} = H^n$ and $\cmp_{|\kS_{x_0}} = C^{n}$. Otherwise, we iterate to step $2$.
\end{enumerate}
\end{SEAenv}

\bigskip

\begin{rema}
Let us conclude our study of the general properties of the CMP by pointing out the fact that everything we established so far remains valid if we replace the expansion exponent by a general function \emph{i.e.} if we consider partitions satisfying
\[
d(A,B) > \ell \left( \min \left\{ r(A), r(B) \right\} \right)
\]
for any pair of clusters $A$ and $B$, where $\ell$ is a non-decreasing function
going to $0$ at $0$ and to $+ \infty$ at $+\infty$. The only notable difference is in Proposition \ref{prop_diam_weight} where the upper bound for the diameter of a cluster is now given in term of a function $f$ satisfying the functional equation \eqref{funcineq} with $\ell$ in place of $\alpha$.
\end{rema}


\section{Phase transitions for cumulative merging on random weighted graphs}
\label{sec:phasetransition}

In this section, we consider the CMP on several random weighted graphs. We investigate whether or not the partition $\cmp$ contains an infinite cluster. At first look, one might fear that this will always be the case due to the amplification phenomenon resulting from the additive nature of cluster merging. Or, on the contrary, the cumulative effect might be quite weak and percolation by cumulative merging could be very similar to classical site percolation. It turns out that both worries are unfounded and that, for a wide variety of random weighted graphs, there is a non-trivial phase transition differing from that of classical site percolation.

In this paper, we will consider the following three general families of graphs:

\begin{mode}[\textbf{Bernoulli CMP}]
\label{BerCMP}
The underlying graph $G$ is a deterministic infinite graph (e.g. $\Z^d$, a tree \ldots) and the weights $(r(x), x \in V)$ are independent identically distributed Bernoulli random variables with parameter $p \in [0,1]$. We denote by $\P_{p} ^G$ the law of $(G,r)$.
\end{mode}

\begin{mode}[\textbf{Continuum CMP}]
\label{BoolCMP}
The underlying graph $G$ is a deterministic infinite graph and the weights $(r(x), x \in V)$ are independent identically distributed random variables with law $\lambda Z$, where $\lambda\geq 0$ and $Z$ is a random variable taking value in $[0,\infty)$. We denote by $\P_{\lambda} ^{G,Z}$ the law of $(G,r)$.
\end{mode}

\begin{mode}[\textbf{Degree-weighted CMP}]
\label{DegCMP}
The underlying graph $G$ is a random infinite graph (e.g. a Galton-Watson tree, a random Delaunay triangulation, a random planar map, \ldots) and the weights are defined by $r(x) \defeq \deg(x)\ind_{\{\deg(x) \geq \Delta\} }$, with $\Delta \geq 0$. We denote by $\P_{\Delta}^G$ the law of $(G,r)$.
\end{mode}

The first two models are the counterparts in the context of cumulative merging of classical (site) percolation and boolean models. The third model may seem artificial at first. However, as we already explained in the introduction, it appears naturally in the connection between cumulative merging and the contact process. We will investigate this relationship in Section \ref{sec:connectioncontact}.

For all these models, we will simply write $\P$ for the law of the weighted graph when the indices are clear from the context. Each model has a free parameter ($p$ for Bernoulli, $\lambda$ for continuum, and $\Delta$ for degree-weighted CMP) so we ask, the expansion exponent $\alpha$ being fixed, whether of not $\cmp$ contains an infinite cluster depending on the value of this parameter. By monotonicity of the CMP with respect to $\alpha$ and the weight sequence $r$, the probability of having an infinite cluster is monotone in both $\alpha$ and the free parameter of the model.
\begin{defi}
For Bernoulli CMP we define
\[
p_c (\alpha) \defeq \inf \left\{ p \in [0 ; 1] \, : \,  \P_p^G \left\{ \cmp(G,r,\alpha) \, \hbox{has an infinite cluster}\right\} > 0 \right\} \in [0 ; 1].
\]
Similarly, for continuous CMP we define
\[
\lambda_c (\alpha) \defeq \inf \left\{ \lambda \geq 0 \, : \,  \P_\lambda^{G,Z} \left\{  \cmp(G,r,\alpha) \, \hbox{has an infinite cluster}\right\} > 0 \right\} \in [0 ; +\infty],
\]
and for degree biased CMP, we set
\[
\Delta_c (\alpha) \defeq \sup\left\{ \Delta \geq 1 \, : \,  \P_{\Delta}^{G} \left\{  \cmp(G,r,\alpha) \, \hbox{has an infinite cluster}\right\} > 0 \right\} \in \lin 1  ; +\infty\rin.
\]
\end{defi}
Under fairly general assumptions on $G$, it is easy to check that the existence of an infinite cluster is an event of either null or full probability.
\begin{prop}
\label{prop:0-1law}
Suppose that $G$ is a vertex transitive graph, then for Bernoulli CMP (model 1) or continuum CMP (model 2), we have
\[
\P \left\{ \cmp \hbox{ has an infinite cluster}\right\} \in \{ 0,1\}.
\]
\end{prop}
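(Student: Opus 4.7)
The plan is a standard ergodicity argument. Let $\Gamma \leq \mathrm{Aut}(G)$ be a subgroup acting transitively on $V$ (which exists by vertex-transitivity). The group $\Gamma$ acts on the space of weight configurations $[0,\infty)^V$ by $(\phi \cdot r)(x) \defeq r(\phi^{-1}(x))$, and because the weights are i.i.d., the law $\P$ is $\Gamma$-invariant.

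First I would observe that the event
\[
A \defeq \bigl\{ \cmp(G,r,\alpha) \text{ has an infinite cluster} \bigr\}
\]
is $\Gamma$-invariant. Indeed, any $\phi \in \Gamma$ is an isometry for the graph distance, so a partition $\kC$ of $V$ is $(r,\alpha)$-admissible if and only if $\phi(\kC) \defeq \{\phi(C) : C \in \kC\}$ is $(\phi \cdot r, \alpha)$-admissible. Taking the finest such partition on both sides gives $\cmp(G,\phi\cdot r, \alpha) = \phi\bigl(\cmp(G,r,\alpha)\bigr)$. In particular the multiset of cluster sizes is preserved, so $A$ is invariant under the $\Gamma$-action.

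Next I would show that $\P$ is ergodic (in fact mixing) under $\Gamma$. Fix two cylinder events $B$ and $B'$ depending only on the weights inside finite sets $F$ and $F'$ respectively. Fix any reference vertex $x_0 \in F$. Since $V$ is infinite and $\Gamma$ acts transitively, we can pick a sequence of vertices $y_n \in V$ with $d(y_n, F') \to \infty$ and automorphisms $\phi_n \in \Gamma$ with $\phi_n(x_0) = y_n$. Because each $\phi_n$ is an isometry, $\mathrm{diam}(\phi_n(F)) = \mathrm{diam}(F)$, so for $n$ large enough $\phi_n(F) \cap F' = \emptyset$. By independence of the weights on disjoint sets, $\P(\phi_n \cdot B \cap B') = \P(B)\P(B')$ for $n$ large, which proves mixing of cylinders and hence ergodicity.

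Combining these two steps, the $\Gamma$-invariant event $A$ satisfies $\P(A) \in \{0,1\}$, which is the claim for Model~\ref{BerCMP} (take the weight space $\{0,1\}$) and for Model~\ref{BoolCMP} (take the weight space $[0,\infty)$); the argument is identical in both cases. The only mildly delicate point is the mixing step, which relies on the fact that automorphisms of $G$ are isometries, so moving a single point far away drags the whole finite set away; everything else is immediate from the definitions.
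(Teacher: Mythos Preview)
Your argument is correct and is exactly the approach the paper takes: the paper's proof is a one-line appeal to ergodicity (``the result follows from ergodicity since the family of weights is invariant by translations and so is the (measurable) event of having an infinite cluster''), and you have simply written out the standard details of that appeal. The only things to tidy are cosmetic: be consistent about whether $\phi_n \cdot B$ denotes the image or preimage under the action (so that the support is $\phi_n(F)$ or $\phi_n^{-1}(F)$ accordingly), and note that passing from ``mixing on cylinders along one sequence $\phi_n$'' to $\P(A)\in\{0,1\}$ uses the usual approximation of $A$ by cylinders together with the $\Gamma$-invariance of $A$.
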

\begin{proof}
Similarly to i.i.d. percolation on transitive graphs, the result follows from ergodicity since the family of weights is invariant by translations and so is the (measurable) event of having an infinite cluster.
\end{proof}
In the case of Model $3$, one needs, of course, to make some assumptions on the random graph $G$ in order to get a 0-1 law. However, for a large class of graphs, the existence of an infinite cluster is still a trivial event thank again to general ergodicity properties. This is in particular the case for the random geometric graphs and the Delaunay triangulations considered in Section \ref{sec:RGG and Delaunay} as well as for Galton-Watson trees or unimodular random graphs.

\subsection{Phase transition on \texorpdfstring{$\Z$}{Z} for Bernoulli CMP}

By trivial coupling, it is clear that for any graph the critical parameter $p_c$ for Bernoulli CMP is smaller or equal to the critical parameter $p_{\scriptstyle{site}}$ for classical i.i.d. site percolation. We now prove that these parameter differ in general. The following
result shows that, even in dimension $1$, there exists an infinite cluster in $\cmp$ for Bernoulli CMP when $p$ is close enough to $1$. This contrasts with the case of site percolation where $p_{\scriptstyle{site}}(\Z) = 1$.

\begin{prop}\label{prop:multiscaleZ}
Consider Bernoulli CMP on $G = \N$. For any $\alpha\geq 1$, we have $p_c(\alpha) < 1$.
\end{prop}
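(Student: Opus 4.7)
The plan is to apply a renormalisation argument at geometrically growing scales, exploiting the recursive structure of the CMP. By Theorem~\ref{theo:equivInfiniteCluster}, it is enough to produce, for some $p<1$, a positive probability that $\cmp_0$ is infinite.

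Fix $\rho \in [2, 2^{\alpha}]$ and set $L_k \defeq L_0\,\rho^k$ with $L_0$ a large integer. Declare an interval $I\subset\N$ of length $L_k$ to be \emph{$k$-good} if $\cmp(I)$ contains a cluster $C$ of weight $r(C) \geq (1-\eta) L_k$ that extends to within distance $\epsilon L_k$ of both endpoints of $I$, for small constants $\epsilon, \eta > 0$ to be chosen. Since $\cmp(I)$ depends only on the weights inside $I$, $k$-goodness events on disjoint intervals are independent; and by Proposition~\ref{prop:stablefinest}, such a witnessing cluster is contained in a cluster of $\cmp(\N)$ of at least the same weight, so we may reason directly in $\cmp(\N)$.

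The deterministic inductive step is: if two adjacent $k$-good intervals $I, I'$ are placed side by side, their associated clusters $C\subset I$ and $C'\subset I'$ satisfy $d(C,C') \leq 2\epsilon L_k + 1$ while $\min(r(C),r(C'))^{\alpha} \geq ((1-\eta)L_k)^{\alpha}$. For $L_0$ large and $\epsilon, \eta$ small the latter dominates, forcing $C$ and $C'$ to merge in $\cmp(\N)$ and yielding a cluster of weight at least $2(1-\eta)L_k = (1-\eta)L_{k+1}$ that still extends within $\epsilon L_k \le \epsilon L_{k+1}$ of both endpoints of $I\cup I'$; hence $I\cup I'$ is $(k+1)$-good. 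Writing $q_k(p)$ for the probability that any length-$L_k$ interval is $k$-good, a standard block/Peierls-type argument --- using independence on disjoint sub-intervals and the dilution Proposition~\ref{prop:etouffement} to absorb a controlled number of defective sub-intervals into neighbouring good ones --- yields a contractive recursion $1 - q_{k+1}(p) \leq F\bigl(1-q_k(p)\bigr)$ with $F(x) = o(x)$ as $x\to 0$, driving $q_k(p) \to 1$ as soon as $q_0(p)$ is close enough to $1$. The base case is elementary: for $p\to 1$ and fixed $L_0$, the interval $[0,L_0)$ contains only finitely many $0$'s with overwhelming probability, so the $1$'s merge into a single cluster of $\cmp([0,L_0))$ of weight close to $L_0$ spanning the interval, giving $q_0(p) \to 1$. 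A Borel--Cantelli argument applied to the nested intervals $[0,L_k)$ then yields a positive probability that $0$ lies in an infinite cluster.

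The principal technical obstacle is the marginal case $\alpha = 1$, for which $\rho = 2$ is forced and each scale-$(k+1)$ interval contains only two scale-$k$ sub-intervals, so that the naive pair-based bound $1 - q_{k+1} \leq 2(1-q_k)$ is not contractive. Extracting the super-linear improvement $F(x) = o(x)$ in this regime requires a refined block analysis, exploiting the fact that moderately-weighted clusters in defective sub-intervals can still be absorbed into stable sets built from neighbouring good sub-intervals via Proposition~\ref{prop:etouffement}, converting most local failures into features of the stabilisation structure rather than obstructions to merging. For $\alpha > 1$ one has strictly more than two sub-intervals per scale and the induction proceeds more straightforwardly.
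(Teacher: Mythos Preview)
Your multiscale strategy is the right idea, and it is essentially what the paper does, but as written the argument has a genuine gap: you never actually carry out the case $\alpha=1$. You correctly identify that with geometric scales $L_{k+1}=2L_k$ each block contains only two sub-blocks and the naive bound $1-q_{k+1}\le 2(1-q_k)$ is not contractive; you then defer the resolution to an unspecified ``refined block analysis'' invoking Proposition~\ref{prop:etouffement}. But Proposition~\ref{prop:etouffement} is a statement about \emph{stable sets} and is tailored to proving the \emph{absence} of an infinite cluster (sub-critical phase); it has no obvious role in forcing large clusters to merge, and I do not see how it rescues the recursion here. Since by monotonicity of $\cmp$ in $\alpha$ it suffices to treat $\alpha=1$, this is precisely the case that matters, and at present it is not proved.

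The fix the paper uses is simple and worth knowing: abandon geometric scales in favour of \emph{super-geometric} ones. Concretely, set $u_n \defeq 20\cdot 2^n$ and $v_n \defeq u_0 u_1\cdots u_n$, so that an interval of length $v_{n+1}$ is covered by $u_{n+1}$ disjoint sub-intervals of length $v_n$. Because the number of sub-intervals grows with $n$, one can allow a single defective sub-interval and still have the remaining big clusters merge over it (any two adjacent good clusters have size $\ge \tfrac34 v_n$ and are at distance $\le \tfrac12 v_n$, and a single bad sub-interval can be jumped by the flanking merged clusters except in two boundary positions). This yields the honestly contractive bound
\[
1-\P\{\kE(v_{n+1},\gamma_{n+1})\}\;\le\;\P\{\mathrm{Binom}(u_{n+1},1-\P\{\kE(v_n,\gamma_n)\})\ge 2\}\;\le\;u_{n+1}^2\bigl(1-\P\{\kE(v_n,\gamma_n)\}\bigr)^2,
\]
which is enough to drive the error down geometrically once the base case is seeded by taking $p$ close to $1$. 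A final anchored variant of the good event then gives $\P\{|\cmp_0|=\infty\}>0$.
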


Any infinite connected graph contains $\N$ as a sub-graph. Thus, by  coupling, the proposition above shows that there always exists a super-critical phase for Bernoulli CMP on any infinite graph. Let us also remark that, if $Z$ is a non-negative random variable which is not identically zero, then, for any $0\leq p<1$ there exists, $0\leq \lambda(p) < \infty$ such that $\lambda(p) Z$ stochastically dominates a Bernoulli random variable with parameter $p$. Putting all this together, we get

\begin{coro}
\label{coro:pc<1}
Assume that $G$ is an infinite connected graph and let $\alpha \geq 1$.
\begin{itemize}
\item $p_c(\alpha) < 1$ for Bernoulli CMP.
\item $\lambda_c(\alpha) < \infty$ for Continuum CMP as soon as $Z$ is not identically $0$.
\end{itemize}
\end{coro}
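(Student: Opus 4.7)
The plan is a multiscale renormalisation, in the spirit of the arguments that dominate the rest of Section~\ref{sec:phasetransition}. Fix $p$ close to $1$ (to be chosen) and construct recursively on a scale index $k\ge 0$ an event $G_k(I)$, defined on blocks $I\subset\N$ of length $L_k$, asserting that the partition $\cmp(I)$ contains a cluster of weight at least $W_k$ suitably positioned inside $I$. The construction will maintain the invariant $W_k^\alpha\ge L_k$, so that the influence radius $r(C)^\alpha$ of such a cluster $C$ spans the whole block: then any two clusters sitting in adjacent good blocks at scale $k$ are at graph distance at most $L_k\le W_k^\alpha$, and the admissibility condition~\eqref{eq:proplimitCMP} forces them into the same cluster of the CMP of the larger region. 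The base of the induction is $L_0=W_0=1$ with $G_0(\{x\})=\{r(x)=1\}$, of probability $p$.

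For the inductive step, fix an integer $\lambda\ge 2$, set $L_{k+1}=\lambda L_k$ and $W_{k+1}=\lambda W_k$, and partition a scale-$(k+1)$ block into $\lambda$ adjacent sub-blocks of length $L_k$. If all of them are scale-$k$ good, each contributes a cluster of weight $\ge W_k$; the argument above forces them to merge into a single cluster of the CMP on the scale-$(k+1)$ block, of total weight $\ge \lambda W_k=W_{k+1}$, and the invariant $W_{k+1}^\alpha\ge L_{k+1}$ is preserved since $\lambda^{\alpha-1}\ge 1$. The naive bound $\P(G_{k+1})\ge \P(G_k)^\lambda$ decays too fast, so the definition of ``good at scale $k+1$'' should be enlarged to tolerate a bounded number $m\ge 1$ of defective sub-blocks, exploiting the slack $W_k^\alpha\ge (m+1)L_k$ (available when $\alpha>1$) to bridge isolated defects by cumulative merging. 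Independence of the $\lambda$ disjoint sub-blocks, combined with a union/Chernoff estimate, then yields a recurrence of the form $1-\P(G_{k+1})\le C\,(1-\P(G_k))^{m+1}$, driving $\P(G_k)\to 1$ super-exponentially as soon as $p$ is close enough to $1$.

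To conclude, fix $x_0\in\N$ and tile $\N$ compatibly across scales so that $x_0$ lies in a unique block $I_k$ of length $L_k$ for every $k$. The events $\{I_k\text{ is scale-}k\text{ good}\}$ are monotonically decreasing in $k$, since goodness of $I_{k+1}$ implies goodness of the sub-block of $I_{k+1}$ containing $x_0$, which is precisely $I_k$; their intersection therefore has probability $\lim_k \P(G_k)>0$. On this event the cluster $\cmp_{x_0}$ has weight at least $W_k$ for every $k$, hence infinite weight, and Proposition~\ref{prop:easycmp} forces it to be an infinite cluster. This gives $\P\{\cmp(\N,r,\alpha)\text{ has an infinite cluster}\}>0$, whence $p_c(\alpha)<1$. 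The delicate point is the case $\alpha=1$: the invariant $W_k^\alpha\ge L_k$ is then saturated by the trivial bound $W_k\le L_k$, so no slack is available to bridge a defective sub-block in the scheme above, and the argument must be refined accordingly, e.g.\ by choosing scales $L_k$ growing slowly enough that an entire block is a run of $1$'s with good probability, or by directly exploiting the positive density of long runs of $1$'s present when $p$ is close to $1$.
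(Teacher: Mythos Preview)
Your outline has the right multiscale flavour, but it contains two genuine gaps and omits two easy reductions.

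\textbf{The case $\alpha=1$ is not proved.} By monotonicity of the CMP in $\alpha$, the only case you actually need is $\alpha=1$; this is precisely where your scheme collapses. With your choices $W_{k+1}=\lambda W_k$ and $L_{k+1}=\lambda L_k$ starting from $W_0=L_0=1$, you have $W_k=L_k$ for all $k$, so your ``good'' event forces the cluster to occupy the entire block: its probability is $p^{L_k}\to 0$, and no amount of defect-tolerance will rescue a recurrence from that. Your closing remarks (``long runs of $1$'s'') do not help either, since the probability of a run of length $L_k$ is again $p^{L_k}$. The paper's proof of Proposition~\ref{prop:multiscaleZ} sidesteps this by replacing your invariant $W_k^\alpha\ge L_k$ with a \emph{density} requirement: a block of length $n$ is good if it carries a cluster of size $\ge\gamma n$ with $\gamma>3/4$. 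Two such clusters in adjacent blocks are then automatically within distance $n/2<\gamma n$ of each other, so they merge even when $\alpha=1$. The densities $\gamma_n$ are allowed to drift down (but stay above $3/4$) to absorb a single bad sub-block at each scale, and the block sizes grow as $u_{n+1}=20\cdot 2^{n+1}$ sub-blocks so that the union bound contracts.

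\textbf{The anchoring step is wrong.} You claim that the events $\{I_k\text{ is good at scale }k\}$ are decreasing in $k$ because goodness of $I_{k+1}$ implies goodness of the sub-block $I_k$. But once you enlarge ``good at scale $k+1$'' to allow $m\ge 1$ defective sub-blocks, this implication fails: $I_k$ could be one of the defective ones. And even on the intersection, the big cluster in $I_k$ need not contain $x_0$. The paper handles this with a separate family of \emph{anchored} events $\kE_0(v_n,\gamma_n)$ requiring the cluster \emph{containing vertex $0$} to be large, and proves a one-sided recurrence for them using the already-established bound on $\P\{\kE(v_n,\gamma_n)\}$.

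\textbf{Two missing reductions.} The statement is for an arbitrary infinite connected $G$ and also for Continuum CMP. You work entirely on $\N$ and only for Bernoulli weights. Both reductions are one-liners (any infinite connected graph contains a copy of $\N$; and for $Z\not\equiv 0$ one can choose $\lambda$ so that $\lambda Z$ stochastically dominates a Bernoulli$(p)$ variable), but they must be said.
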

\begin{proof}[Proof of proposition \ref{prop:multiscaleZ}]
Since the CMP is monotone with respect to $\alpha$, we only need to prove the result for $\alpha = 1$ (which we assume from now on). The recursive structure of the CMP is particularly suited for using renormalization arguments so it is not surprising that most of our proofs use a multiscale analysis. In the present case, we consider events of the form
\[
\kE (n,\gamma) \defeq \left\{
\text{$\cmp(\lin0;n\irin)$ contains a cluster with a least $\gamma n$ elements}
\right\}.
\]
Note that we consider here the CMP on the sub-graph $\lin 0;n\irin$ which does not necessarily coincide with the restriction of $\cmp(\N)$ to the interval $\lin 0;n \irin$. We call these events \emph{good} because of the following inequality which holds whenever $\frac{3}{4} \leq \gamma \leq 1$:
\[
\P\left\{\kE(2n, \gamma) \right\} \geq 1 - \left( 1 - \P\left\{\kE(n, \gamma) \right\} \right)^2.
\]
To see that, apply the cluster merging procedure on $\lin 0;n \irin$ and on $\lin n;2n \irin$ separately. If each of these intervals contains a big cluster of size greater than $\gamma n$, then the rightmost vertex on the big cluster of the first interval is a distance smaller that $\frac{n}{2}\leq \gamma n$ of the leftmost vertex of the big cluster of the second interval. Therefore, these two clusters will merge together when performing the CMP on $\lin 0;2n \irin$ and give birth to a cluster of size at least $2\gamma n$.

Define $u_n \defeq 20 \cdot 2^n$ for $n \geq 0$. We also define the sequence $(\gamma_n)_{n\geq 0}$ by $\gamma_0 \defeq 9/10$ and $\gamma_{n+1} \defeq \gamma_n \left( 1 - \frac{2}{u_{n+1}} \right)$. We can write
\[
\gamma_n = \frac{9}{10} + \sum_{k=0}^{n - 1} (\gamma_{k+1} - \gamma_k) \geq \frac{9}{10} - \sum_{k=0}^{n - 1} \frac{2}{u_{k+1}} = \frac{9}{10} - \frac{2}{20} \sum_{k=1}^{n} \frac{1}{2^k} \geq \frac{9}{10} - \frac{1}{10} = \frac{8}{10}.
\]
so that  $3/4 < \gamma_n < 1$ for every $n$. Now, fix $\varepsilon > 0$, by continuity, it is possible to chose $p \in (0,1)$ close enough to $1$ such that $\P \left\{ \kE(20,9/10) \right\} > 1 - \varepsilon$. In the following lines, we will prove by induction that, for $n \geq 0$,
\[
\P \left\{ \kE(v_n,\gamma_n) \right\} > 1 - \varepsilon / 4^n
\]
where $v_n \defeq u_0 \cdot u_1 \ldots u_n$.

Assume that the inequality holds for some $n \geq 0$. The interval $\lin 0 ; v_{n+1} \irin$ is divided into $u_{n+1}$ sub-intervals of size $v_n$. Suppose that each of those intervals contains a big cluster of size at least $\gamma_n v_n$, except maybe one of them  which we call the \emph{bad interval}. As we already mentioned two clusters of size at least $\gamma_n v_n$ belonging to two neighboring intervals will merge since $\gamma_n > 3/4$. This means that all the big clusters belonging to sub-intervals on the left hand side of the bad interval will merge together into a cluster denoted by $C_l$. The same thing happens for the clusters on the right hand side of the bad interval creating a big cluster $C_r$. Now, the only case where $C_l$ and $C_r$ do not merge together is when the influence radius of one of them  cannot "reach over" the bad interval. This happens only if the bad interval is located at  $\lin v_n; 2 v_n\irin$ or $\lin (u_{n+1} - 2 )v_n ; (u_{n+1} - 1 ) v_n\irin$ (see Figure \ref{fig:multiplegood1D} for an illustration). In any case, at least $u_{n+1} - 2$ sub-intervals containing a big cluster merge together. Thus, the CMP on $\lin 0 ; v_{n+1} \irin$ contains a cluster of size at least
\[
(u_{n+1} - 2) \gamma_n v_n = \left(1 - \frac{2}{u_{n+1}} \right) \gamma_n v_{n+1} = \gamma_{n+1} v_{n+1}.
\]
\begin{figure}[!t]
\begin{center}
\includegraphics[width=9cm]{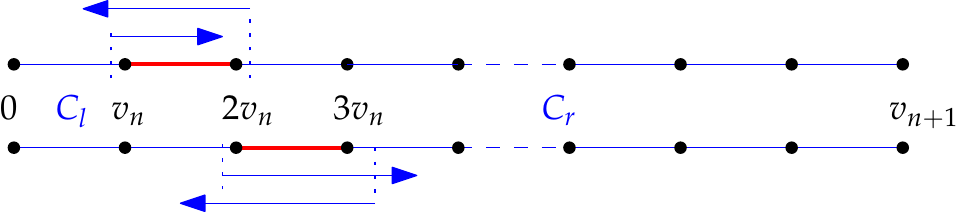}
\caption{\label{fig:multiplegood1D}Red intervals represent bad events. Top: The bad event is on the interval $\lin v_n ; 2 v_n\irin$ and $C_l$ cannot reach over to merge with $C_r$. Bottom: The bad interval is inside $\lin 2v_n ; v_{n+1} - 2v_n \irin$ and $C_l$ and $C_r$ merge together.}
\end{center}
\end{figure}
Recalling that this occurs whenever there is strictly less than two bad intervals and using the fact that the weights $(r(x))$ are i.i.d., we get, using the recurrence hypothesis,
\begin{align*}
1 - \P \left\{\kE(v_{n+1},\gamma_{n+1}) \right\} & \leq \P\big\{ \text{Binomial} \left( u_{n+1} , 1 - \P \left\{\kE(v_{n},\gamma_{n}) \right\} \right)  \geq 2 \big\}\\
& \leq u^2_{n+1}\left( 1 - \P \left\{ \kE(v_{n},\gamma_{n}) \right\} \right)^2\\
& \leq \frac{ 40^2 \varepsilon^2}{4^n} < \frac{\varepsilon}{4^{n+1}}
\end{align*}
provided that $\varepsilon$ is chosen small enough. Thus, we have proved that, for $\varepsilon >0$ arbitrarily small, we can always find $0< p <1$ such that
\[
\P \left\{ \kE(v_n,\gamma_n) \right\} > 1 - \varepsilon / 4^n \quad\hbox{for all $n\geq 0$.}
\]
We now introduce the "anchored" good events
\[
\kE_0 (n,\gamma) \defeq \left\{ \text{in $\cmp(\lin 0;n \irin)$ the cluster containing vertex $0$ has at least $\gamma n$ elements} \right\}.
\]
Using similar arguments as before, we see that the event $\kE_0 (v_{n+1},\gamma_{n+1})$ is realized whenever the following conditions are met:
\begin{itemize}
\item $\kE_0 (v_{n},\gamma_n)$ is realized;
\item the interval $\lin v_n ; 2 v_n \irin$ contains a cluster of size at least $\gamma_n v_n$;
\item For every $2 \leq k < u_{n+1}$ except maybe one, the interval $\lin k v_n ; (k+1) v_n \irin$ contains a cluster of size at least $\gamma_n v_n$.
\end{itemize}
This implies
\begin{eqnarray*}
\P \left\{ \kE_0\left(v_{n+1},\gamma_{n+1}\right) \right\} &\geq& \P \left\{\kE_0\left(v_{n},\gamma_n\right) \right\} \left(1 - \frac{\varepsilon}{4^n} \right) \P\left\{\text{Binomial} \left( u_{n+1} - 2, \frac{\varepsilon}{4^n}\right)  < 2 \right\}\\
&\geq & \P \left\{\kE_0\left(v_{n},\gamma_n\right) \right\} \left(1 - \frac{\varepsilon}{4^n} \right) \left( 1 - \frac{\varepsilon}{4^{n+1}} \right),
\end{eqnarray*}
therefore, since $\gamma_n > 3/4$ for  all $n$, we get that
\[
\liminf_{n \to \infty} \P \left\{ \kE_0(v_{n},3/4) \right\} > 0
\]
which proves that, with positive probability, the cluster $\cmp_0$ is infinite whenever $p$ is sufficiently close to $1$.
\end{proof}

\bigskip

As we will see in the next section, for any $\alpha \geq 1$, we have $p_c(\alpha) > 0$ for Bernoulli CMP on any $d$-dimensional lattice, $d\geq 1$. For the time being, we provide an alternative proof which works only in the one-dimensional case and for $\alpha  =1$ but has the advantage of providing an explicit lower bound for the critical percolation parameter.
\begin{prop}
\label{prop:BerCMPp_c>0}
Consider Bernoulli CMP on $\Z$ with $\alpha = 1$. We have
$$
p_c(1) \;\geq\; \frac{1}{2}.
$$
\end{prop}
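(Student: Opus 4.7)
The aim is to show that when $p < 1/2$ the partition $\cmp$ has no infinite cluster almost surely, which yields $p_c(1) \geq 1/2$. By Proposition~\ref{prop:easycmp} (uniqueness of an infinite cluster) together with the translation invariance of the weights, this reduces to $\P\{|\cmp_0| = \infty\} = 0$, and by Theorem~\ref{theo:equivInfiniteCluster} to $\P\{\kS_0 \text{ is finite}\} = 1$. Hence the task is to produce, almost surely, a finite stable subset of $\Z$ containing the origin.

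The key object is the pair of independent random walks
\[
S^{+}_n = \sum_{i=1}^{n}\bigl(1 - 2\, r(i)\bigr), \qquad S^{-}_n = \sum_{i=1}^{n}\bigl(1 - 2\, r(-i)\bigr), \qquad n \geq 0,
\]
whose increments are i.i.d.\ with mean $1 - 2p > 0$. Thus $S^{\pm}_n \to +\infty$ almost surely, and their ladder epochs are almost surely finite. These positive-drift walks track the surplus of zero-weight vertices over one-weight vertices on each half-line, and will serve as the source of \emph{zero-dressings} used to close off a stable set around $0$.

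The construction is inductive, via Proposition~\ref{prop:etouffement}: starting from a seed $W_0 \ni 0$, I try at each step to enclose $W_k$ inside a larger interval $W_{k+1}$ by adjoining, on each side, a stable dressing long enough to contain the influence ball $B(W_k, r(W_k))$. The simplest stable dressing is a run of zero-weight vertices (which form singleton clusters of influence radius $0$ and are therefore trivially stable) of length at least $r(W_k)$; Proposition~\ref{prop:etouffement} then guarantees that $W_{k+1}$ is itself stable. The process terminates as soon as $W_k$ needs no further dressing, at which point $\kS_0 \subseteq W_k$ is finite. The positive drift of $S^{\pm}$, together with the a.s.\ finiteness of their ladder epochs, provides zero-excesses of arbitrarily large extent on each side, which I expect to prove sufficient to close the construction in finitely many steps almost surely.

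The main obstacle is to show, quantitatively, that the zero-excesses delivered by the walks absorb the growth of $r(W_k)$ at every step. A naive accounting that counts only contiguous pure zero-runs as buffers seems to yield only a strictly weaker threshold, so the sharp bound $p = 1/2$ must exploit the cluster-merging structure studied in Section~\ref{sec:orderoriented}. Specifically, inside $W_k$ the effective cluster weights are generally much smaller than the raw number of one-vertices, and the recursive merging history controlled by the functional inequality~\eqref{funcineq} behind Proposition~\ref{prop_diam_weight} lets one recruit stable dressings from intervals that still contain some sparse small clusters, provided the partial sums of $Y_i = 1 - 2 r(i)$ stay non-negative between the origin and the endpoints of the proposed dressing. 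Establishing this last statement rigorously, by induction on the merging levels of the clusters present in $W_k$, is the hard part; once it is in place, a Borel--Cantelli argument for the positive-drift walks $S^{\pm}$ closes the proof.
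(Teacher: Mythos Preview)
Your proposal is not a complete proof: you explicitly flag that ``establishing this last statement rigorously \ldots\ is the hard part'' and then do not carry it out. The inductive scheme via Proposition~\ref{prop:etouffement} together with random-walk bookkeeping is a reasonable heuristic, but the step you isolate---that intervals on which the partial sums $\sum (1-2r(i))$ stay non-negative can serve as stable dressings---is neither proved nor obviously true as stated. As you yourself observe, using only contiguous zero-runs as buffers gives a strictly weaker threshold, so the whole content of the sharp bound $1/2$ lies precisely in the step you have left open.

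The paper's argument is entirely different and rests on a duality lemma (Lemma~\ref{lem:dualityZ}): if, for $\{0,1\}$-weights $r$ on $\lin 0;n\rin$, the sites $0$ and $n$ lie in the same cluster of $\cmp(\lin 0;n\rin,r)$, then $\lin 0;n\rin$ is a \emph{stable} set for the reversed weights $\tilde r(x)=1-r(x)$. This is proved by induction on $n$, by looking at the last merge joining the cluster of $0$ to that of $n$ and applying Proposition~\ref{prop:etouffement} to the middle gap. Given the lemma, the proposition follows by contradiction at the self-dual point: fix $p=\tfrac12$ and suppose $p_c<\tfrac12$, so that a.s.\ there is an infinite cluster, necessarily unbounded in both directions. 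Picking endpoints $L_n\to-\infty$, $R_n\to+\infty$ in this cluster, the duality lemma makes each $\lin L_n;R_n\rin$ stable for $\tilde r$, and Theorem~\ref{theo:equivInfiniteCluster} then forces the CMP with weights $\tilde r$ to have no infinite cluster. But at $p=\tfrac12$ the sequences $r$ and $\tilde r$ have the same law, a contradiction.

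This duality is exactly the missing idea in your approach. It delivers, in one stroke, the structural statement you were hoping to obtain by an induction on merging levels: ``$0$ and $n$ are connected for $r$'' is precisely the combinatorial condition that makes $\lin 0;n\rin$ stable for $1-r$. Your random-walk framework is well suited to proving \emph{some} positive lower bound on $p_c$, but the sharp value $1/2$ comes from the symmetry $r\leftrightarrow 1-r$, which your outline does not exploit.
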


The proof is based on the following duality lemma.

\begin{lemm}
\label{lem:dualityZ} We assume here that $\alpha = 1$. Set $I_n \defeq \lin 0; n \rin$ and fix  a sequence of weights $(r(x),x\in I_n) \in \{0,1\}^{I_n}$. Suppose that the CMP $\cmp(I_n,r)$ inside $I_n$ is such that site $0$ and site $n$ belong to the same cluster. Then, the interval $I_n$ (seen as a subset of $\Z$) is stable for the CMP $\widetilde{\cmp}(I_n,\tilde{r})$ constructed with the reversed
weights $\tilde{r}(x) \defeq 1 - r(x)$.
\end{lemm}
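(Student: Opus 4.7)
The plan is to reduce stability of $I_n \subset \Z$ for $\widetilde\cmp(I_n,\tilde r)$ to the pair of inequalities $\min(\widetilde C) \geq \tilde r(\widetilde C)$ and $\max(\widetilde C) \leq n-\tilde r(\widetilde C)$ for every cluster $\widetilde C$ of $\widetilde\cmp(I_n,\tilde r)$, exploiting that $\alpha=1$ so the ball $B_\Z(\widetilde C,\tilde r(\widetilde C))$ is just the integer interval $[\min\widetilde C-\tilde r(\widetilde C),\max\widetilde C+\tilde r(\widetilde C)]$. Observing next that $r\in\{0,1\}^{I_n}$ combined with Proposition \ref{prop:easycmp}(\ref{easypropcmp1}) forces every $\cmp(I_n,r)$-cluster to be contained in $\{x:r(x)=1\}$ and every $\widetilde\cmp(I_n,\tilde r)$-cluster to be contained in $\{x:r(x)=0\}$, with each cluster's weight equal to its cardinality. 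Finally, the left-right symmetry $x\mapsto n-x$ (which exchanges the hypotheses $0\in C^*$ and $n\in C^*$) reduces everything to establishing the single inequality $\min(\widetilde C)\geq |\widetilde C|$ for every cluster $\widetilde C$ of $\widetilde\cmp(I_n,\tilde r)$.

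I would then argue by strong induction on $n$, the cases $n\leq 1$ being immediate since $0,n\in C^*$ then forces $r\equiv 1$ on $I_n$ and $\tilde r\equiv 0$. For the inductive step, I would exploit the merging tree of $C^*$: extract the final merge $C^*=A\sqcup B$ with $0\in A$, $n\in B$, $d(A,B)\leq\min(|A|,|B|)$, and in 1D one can take $A,B$ linearly separated, say $p:=\max A<q:=\min B$, so that $q-p\leq\min(|A|,|B|)$. Applying the inductive hypothesis to $\cmp([0,p],r|_{[0,p]})$ and $\cmp([q,n],r|_{[q,n]})$ requires first checking that $A$ (resp.\ $B$) is the $\cmp([0,p])$-cluster containing $0$ and $p$ (resp.\ the $\cmp([q,n])$-cluster containing $q$ and $n$); this follows because the merges producing $A$ in $\cmp(I_n,r)$ all involve vertices of $[0,p]$ only, hence remain valid in the sub-CMP. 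Each cluster $\widetilde C\in\widetilde\cmp(I_n,\tilde r)$ is then classified as lying in $[0,p]$, in $[q,n]$, or straddling the gap $[p+1,q-1]$. The first two cases follow from the inductive hypothesis via Proposition \ref{prop:stablefinest} relating $\widetilde\cmp(I_n,\tilde r)|_{[0,p]}$ to $\widetilde\cmp([0,p],\tilde r|_{[0,p]})$.

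The hard step will be the straddling case, in which $\widetilde C$ contains zeros on both sides of the gap $[p+1,q-1]$ while this gap itself contains no vertex of $C^*$. Ruling this out (or directly verifying $\min(\widetilde C)\geq |\widetilde C|$ here) requires exploiting simultaneously the admissibility constraint $q-p\leq \min(|A|,|B|)$ on the $r$-side and the admissibility of the zero-merges of $\widetilde C$ across the gap on the $\tilde r$-side, both controlled via Proposition \ref{prop_diam_weight} applied to the relevant sub-clusters. The surplus $|A|\geq q-p$ inherited from the $C^*$-merge is exactly what is needed to bound the left extent of any straddling $\widetilde C$ by its weight and close the induction.
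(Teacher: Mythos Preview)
Your inductive skeleton --- split $C^*$ at its final merge into $A$ (containing $0$, $p=\max A$) and $B$ (containing $n$, $q=\min B$), apply induction to $[0,p]$ and $[q,n]$, then handle what remains --- is exactly the paper's strategy. The execution, however, has two related gaps.

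First, Proposition~\ref{prop:stablefinest} is too weak for the case $\widetilde C\subset[0,p]$. It only gives $\widetilde{\cmp}([0,p])\prec\widetilde{\cmp}(I_n)|_{[0,p]}$, so a priori such a $\widetilde C$ is a \emph{union} of several $\widetilde{\cmp}([0,p])$-clusters $\widetilde C'_1,\ldots,\widetilde C'_k$, and the inductive bounds $\min\widetilde C'_j\ge|\widetilde C'_j|$ do not yield $\min\widetilde C\ge\sum_j|\widetilde C'_j|=|\widetilde C|$. You need Proposition~\ref{prop:locality}: the induction hypothesis says precisely that $[0,p]$ is \emph{stable} (in $\Z$, hence also in $I_n$), so $\widetilde{\cmp}(I_n)|_{[0,p]}=\widetilde{\cmp}([0,p])$ exactly, forcing $k=1$.

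Second, once you invoke Proposition~\ref{prop:locality}, your ``hard straddling case'' is vacuous: stability of $[0,p]$ and of $[q,n]$ forces every $\widetilde{\cmp}(I_n)$-cluster to lie entirely in $[0,p]$, in $[q,n]$, or in the gap $\{p+1,\ldots,q-1\}$. The case you omitted from your trichotomy, $\widetilde C\subset\{p+1,\ldots,q-1\}$, is then the only remaining one, and it is immediate from the merge inequality $q-p\le\min(p+1,\,n+1-q)$: one gets $|\widetilde C|\le q-p-1\le p<p+1\le\min\widetilde C$, and symmetrically on the right. The paper bypasses the cluster-by-cluster bookkeeping altogether by applying Proposition~\ref{prop:etouffement} directly with $W=\{p+1,\ldots,q-1\}$ and $\widetilde W=I_n$, which is the cleaner packaging of the same idea.
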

\begin{proof}[Proof of Lemma \ref{lem:dualityZ}.]
The proof goes by induction. The case $n=1$ is trivial. Suppose now that the result holds for every $k \leq n$. We consider $I_{n+1}$ with a sequence of weights satisfying the assumptions of the lemma. Since sites $0$ and $n+1$ are in the same cluster, we have in particular $r(0) = r(n+1) = 1$. According to the cluster merging procedure, the partition $\cmp(I_{n+1})$ is obtained by successive application of the merging operator. We remark that there are at most $n$ true merging needed to construct $\cmp(I_{n+1})$ (because the number of clusters decreases by one after each true merging). Let $(x_k,y_k)$ be a sequence of pairs of sites describing a merging history for $\cmp(I_{n+1})$ \emph{i.e.}
$$
\cmp(I_{n+1}) = M_{x_m,y_m} \circ M_{x_{m-1},y_{m-1}} \circ \cdots \circ M_{x_1,y_1} (I_{n+1})
$$
(where we identify $I_{n+1}$ with its trivial partition). Let $k\leq m$ denote the index where the operator $M_{x_k,y_k}$ merges together the cluster containing $0$ with the cluster containing $n+1$ and let $\kC$ be the partition of $I_{n+1}$ obtained just before this merging occurs. We
 set
$$
L \defeq \max \kC_0 \quad \hbox{ and } R \defeq \min \kC_{n+1}
$$
\begin{figure}[!t]
\begin{center}
\includegraphics[width=9cm]{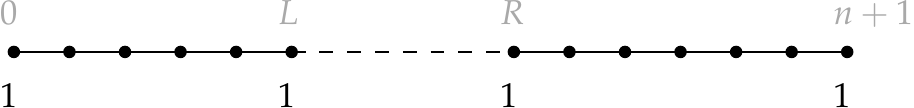}
\caption{\label{fig:duality}Illustration of the proof of Proposition \ref{prop:locality}. In this configuration, $0$ and $L$ are both in a cluster composed of vertices of $\lin 0;L \rin$. Vertices $R$ and $n+1$ are also both in a cluster composed of vertices of $\lin R ; n+1 \rin$. In the next step of the cluster merging procedure, these two clusters merge.}
\end{center}
\end{figure}
\emph{i.e.} $L$ is the rightmost site of the cluster of $\kC$ containing $0$ and $R$ is the leftmost site of the cluster containing $n+1$. See Figure \ref{fig:duality} for an illustration of this configuration. Since these two clusters can merge and since the weight of a cluster is equal to its number of sites (because each weight is either $0$ or $1$), we have
\begin{equation}\label{dualityeq1}
R - L \leq \min( L+1 , n+2-R).
\end{equation}
Furthermore, we observe that all the merging used to construct the cluster $\kC_0$ containing $0$ and $L$ are also valid when considering the CMP inside $\lin 0 ; L \rin$. This means that the CMP on $\lin 0 ; L \rin$ has a cluster containing both $0$ and $L$. Similarly, the CMP inside $\lin R ; n+1 \rin$ has a cluster containing both $R$ and $n+1$. Applying the induction hypothesis, we deduce that the subsets $\lin 0; L \rin$ and $\lin R; n+1 \rin$ are stable for the reversed weight sequence $\tilde{r}$. Moreover, the sum of the weights $\tilde{r}$ of all the sites inside the center interval $\ilin L ; R \irin$ is bounded above by $R - L - 1$. Combining this with \eqref{dualityeq1}, we find that
$$
\sum_{x\in \ilin L; R\irin} \tilde{r}(x)\leq \min( L, n+1-R) < d(\ilin L; R\irin \;,\; \Z\setminus  \lin 0; n+1 \rin)
$$
and we conclude using Proposition \ref{prop:etouffement} that  $\lin 0 ; n+1 \rin$ is indeed stable set for the reversed weight sequence $\tilde{r}$.
\end{proof}

\begin{proof}[Proof of Proposition \ref{prop:BerCMPp_c>0}.]
By symmetry, the probability of having an infinite cluster unbounded towards $+\infty$ is the same as that of having an infinite cluster unbounded toward $-\infty$. By translation invariance, these probability are either $0$ or $1$. Thus, if the infinite cluster exists, it is a.s. unbounded towards both $+ \infty$ and $- \infty$.

We fix $p=\frac{1}{2}$ and we suppose  by contradiction that $p_c < p$. Thus, $\cmp$ contains an infinite cluster a.s. which we denote by $C^\infty$. Let $(x_n,y_n)_{n\geq 0}$ be a (random) sequence of pairs of vertices associated with the cluster merging procedure \emph{i.e.} such that $\lim_n\uparrow \kC^n = \cmp$ where
$$\kC^n \defeq M_{x_n,y_n} \circ \cdots \circ M_{x_0,y_0} (\Z)$$
(we identify $\Z$ with its finest partition). We can find an increasing function $\varphi$ such that $x_{\varphi(n)}$ and $y_{\varphi(n)}$ both belong to $C^\infty$ for every $n$. The sequence of sets $C^n \defeq \kC^{\phi(n)}_{x_{\phi(n)}}$ increases weakly to $C^\infty$ as $n$ tend to infinity. Let $L_n \defeq \min C_n$ and $R_n \defeq \max C_n$ be the leftmost and rightmost vertices of $C^n$. The cluster $C^\infty$ being unbounded in both directions, $L_n$ and $R_n$ diverge respectively go to $-\infty$ and $+ \infty$ as $n \to \infty$. Moreover, by construction $L_n$ and $R_n$ are in the same cluster if we consider the CMP inside $\lin L_n ; R_n \rin$. Thus, according to Lemma \ref{lem:dualityZ}, the intervals $\lin L_n ; R_n \rin$ are stable sets for the reversed weight sequence. Moreover, the union of these sets exhausts $\Z$ so Theorem \ref{theo:equivInfiniteCluster} states that the CMP on $\Z$ with weight sequence $\tilde{r}$  contains only finite clusters. But since $p=\frac{1}{2}$, the sequences $(r(x),x\in Z)$ and $(\tilde{r}(x),x\in Z)$ have the same law which leads to a contradiction.
\end{proof}
\begin{samepage}
\begin{rema} We do not believe the lower bound $1/2$ of Proposition \ref{prop:BerCMPp_c>0} to give the critical value for Bernoulli CMP on $\Z$. In fact, numerical simulations suggest that
$$
p_c(1) \simeq 0.65.
$$
We ask the question: does there exist an explicit formula for this critical parameter?
\end{rema}
\end{samepage}

\subsection{Phase transition on \texorpdfstring{$\Z^d$}{Zd} for continuum CMP.}
\label{sec:Zdiid}
In the previous section we have shown that, on any infinite graph, there exists a super-critical phase where an infinite cluster is present. In this section we prove that, on finite dimensional lattices, there also exists a sub-critical phase where every cluster is finite.
\begin{prop}
\label{prop:CMPsubcritical}
Let $d\geq 1$.  Consider continuum CMP on $G = \Z^d$ with expansion exponent $\alpha \geq 1$  and i.i.d. weights
distributed as $\lambda Z$. Suppose that $\E[Z^{\beta_0}] < +\infty$ for $\beta_0 \defeq (4\alpha d)^2$. Then, we have
$$
\lambda_c(\alpha) > 0.
$$
\end{prop}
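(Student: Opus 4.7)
The plan is to apply Theorem \ref{theo:equivInfiniteCluster}: it suffices to show that, for $\lambda$ sufficiently small, every vertex $x_0 \in \Z^d$ is almost surely contained in an increasing family of stable sets exhausting $\Z^d$. These stable sets will be constructed via a multiscale renormalization, in the spirit of the argument given in Proposition \ref{prop:multiscaleZ}.

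Concretely, I would fix a growth rate $\kappa > 1$, set $L_n = L_0 \kappa^n$, and tile $\Z^d$ at each scale into boxes of side $L_n$. At scale $0$, call a box ``bad'' if its total weight exceeds some threshold $M_0$; Markov's inequality using the $\beta_0$-th moment assumption gives $\P[\text{bad}_0] \leq C \lambda^{\beta_0} \E[Z^{\beta_0}] M_0^{-\beta_0}$, which can be made arbitrarily small by taking $\lambda$ small. Recursively, declare a scale-$(n+1)$ box to be bad if it contains two or more bad scale-$n$ sub-boxes. By independence of disjoint sub-boxes and a union bound over pairs, this yields a recursion of the form $p_{n+1} \leq \binom{\kappa^d}{2} p_n^2$, so that $p_n$ decays doubly exponentially in $n$ provided $p_0$ is small enough.

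The crucial stability propagation step relies on Proposition \ref{prop:etouffement}: if a scale-$(n+1)$ box is good (has at most one bad scale-$n$ sub-box), then by induction every other scale-$n$ sub-box is stable. Their union forms a stable ``shell'' (by Proposition \ref{prop:unionstable}) of thickness of order $L_n$ surrounding the unique bad sub-box, after a mild enlargement to handle boundary issues. Provided the total weight of this bad sub-box is at most of order $L_n^{1/\alpha}$, its influence radius does not exceed the shell thickness, and Proposition \ref{prop:etouffement} allows us to conclude that the enlarged scale-$(n+1)$ region is stable. The delicate point is to ensure that an admissible weight bound $M_n \lesssim L_n^{1/\alpha}$ propagates through the recursion; this is where the specific exponent $\beta_0 = (4\alpha d)^2$ is calibrated, balancing the cost of a single anomalously heavy site against the geometric expansion rate, the dimension, and the admissibility exponent $\alpha$.

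Finally, since exactly one scale-$n$ box contains $x_0$ for each $n$, and $\sum_n p_n < \infty$ for $\lambda$ sufficiently small, the Borel--Cantelli lemma yields that almost surely the scale-$n$ box containing $x_0$ is good for all $n$ large enough. Each such good box, by the above, is a stable set containing $x_0$, and their sizes grow to infinity, so Theorem \ref{theo:equivInfiniteCluster} gives $|\cmp_{x_0}| < \infty$ almost surely. Since $x_0$ was arbitrary, the CMP has no infinite cluster, which proves $\lambda_c(\alpha) > 0$. The main obstacle I foresee is the inductive control of the weight of the unique bad sub-box at each scale: one may need to refine the definition of badness (for example, distinguishing extremely heavy individual sites from merely dense sub-boxes with a separate truncation threshold at each scale) so that both the probability recursion and the weight bound close simultaneously, and it is this auxiliary refinement that forces the full strength of the $\beta_0$-th moment assumption.
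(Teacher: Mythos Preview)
Your overall strategy coincides with the paper's: multiscale renormalization, building stable sets via Propositions \ref{prop:unionstable} and \ref{prop:etouffement}, then concluding with Theorem \ref{theo:equivInfiniteCluster} and Borel--Cantelli. The gap is precisely where you flag it: you do not actually close the weight-control step, and with your choices (geometric scales $L_n = L_0\kappa^n$ and ``at most one bad sub-box'') it does not close. A bad scale-$n$ box is, by your definition, only known to contain at least two bad scale-$(n-1)$ sub-boxes; nothing prevents its total weight from being arbitrarily large, so you cannot verify the hypothesis $r(W)^\alpha \lesssim L_n$ needed to invoke Proposition \ref{prop:etouffement} at the next scale.

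The paper resolves this with three devices that differ from your sketch. First, it takes super-exponentially growing scales $L_n = 2^{c^n}$ with $c = 2\alpha d + 1$ and $R_n = L_1\cdots L_n$, so the number of sub-boxes $(2L_{n+1}-1)^d$ grows at each step rather than staying fixed at $\kappa^d$. Second, it decouples weight control from the recursive badness: a separate event $\kG(R_n,L_{n+1})$ requires \emph{every} scale-$n$ sub-box to have total weight at most $L_{n+1}^{1/\alpha}$, and this is bounded directly (not inductively) by Markov's inequality on the $\beta_0$-th moment---this is exactly where the specific value $\beta_0 = (4\alpha d)^2$ is calibrated. Third, the paper allows up to $k_0 = \lceil 2^{d+1}(c+1)\rceil$ bad sub-boxes rather than one; on $\kG$, their combined influence radius is $O(k_0^\alpha L_{n+1})$, which is negligible compared to $R_{n+1}$, so Proposition \ref{prop:etouffement} still applies. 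Your proposal is the right skeleton, but without these modifications (or an explicit version of the ``refined badness'' you allude to) the induction does not go through.
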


By coupling, this proposition implies that the critical parameter for Bernoulli percolation is strictly positive in any dimension:

\begin{coro}
\label{coro:BerCMPsubcritical}
Consider Bernoulli CMP  on $\Z^d$ with expansion exponent $\alpha \geq 1$. We have
$$p_c(\alpha) > 0.$$
\end{coro}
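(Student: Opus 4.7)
The plan is to deduce the Bernoulli statement from its continuous counterpart (Proposition~\ref{prop:CMPsubcritical}) by a pointwise monotone coupling of weight sequences. First I would pick any non-negative random variable $Z$ with unbounded support and with $\E[Z^{\beta_0}] < \infty$, where $\beta_0 = (4\alpha d)^2$; a standard exponential law will do. Applying Proposition~\ref{prop:CMPsubcritical} to this $Z$, the critical parameter $\lambda_c(\alpha)$ of the corresponding continuum CMP on $\Z^d$ is strictly positive, so I may fix some $\lambda_0 \in (0, \lambda_c(\alpha))$ for which the continuum CMP with weights $(\lambda_0 Z_x)_{x \in \Z^d}$ almost surely has no infinite cluster.

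Next I would realise the Bernoulli and continuous weight fields on a common probability space. Let $(U_x)_{x \in \Z^d}$ be i.i.d.\ uniforms on $[0,1]$ and set
\[
r_1(x) \defeq \ind_{\{U_x \geq 1 - p\}}, \qquad r_2(x) \defeq \lambda_0 F_Z^{-1}(U_x),
\]
where $F_Z$ is the distribution function of $Z$. Then $r_1$ is an i.i.d.\ Bernoulli$(p)$ field while $r_2$ has the law prescribed by Model~\ref{BoolCMP} with $\lambda = \lambda_0$. Define
\[
p_0 \defeq \P\{ \lambda_0 Z \geq 1 \} = 1 - F_Z(1/\lambda_0),
\]
which is strictly positive because $Z$ is unbounded. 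As soon as $p \leq p_0$ one has $1 - p \geq F_Z(1/\lambda_0)$, so the event $\{U_x \geq 1 - p\}$ forces $F_Z^{-1}(U_x) \geq 1/\lambda_0$, hence $r_2(x) \geq 1 = r_1(x)$; and when $r_1(x) = 0$ the inequality $r_1 \leq r_2$ is automatic. We thus have $r_1 \leq r_2$ pointwise.

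Finally, I would invoke the monotonicity of the CMP with respect to its weight sequence (the Remark following Definition~\ref{defCMP}) to conclude that
\[
\cmp(\Z^d, r_1, \alpha) \;\prec\; \cmp(\Z^d, r_2, \alpha),
\]
so every cluster on the Bernoulli side is contained in one on the continuum side. Since the coarser partition on the right has only finite clusters almost surely, so does the finer one on the left, which shows $p_c(\alpha) \geq p_0 > 0$. There is really no obstacle: the argument is a standard quantile coupling, and the only point requiring mild care is the simultaneous choice of a $Z$ with enough moments (so that Proposition~\ref{prop:CMPsubcritical} applies) and with an unbounded tail (so that $p_0 > 0$ despite the constraint $\lambda_0 < \lambda_c(\alpha)$).
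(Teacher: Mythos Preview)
Your argument is correct and follows the same route the paper indicates: it derives the Bernoulli statement from Proposition~\ref{prop:CMPsubcritical} by a monotone coupling of weights. The paper merely writes ``By coupling, this proposition implies that the critical parameter for Bernoulli percolation is strictly positive in any dimension'' without spelling out the construction; your quantile coupling with an unbounded $Z$ having the required moment is precisely a clean way to make that sentence rigorous.
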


\begin{rema}
The moment condition is obviously not optimal. We conjecture that a sub-critical phase exists whenever $Z$ admits a moment of order $\alpha d + \varepsilon$ for $\varepsilon$ small enough. Conversely, if $Z$ does not have moments of order $\alpha d - \varepsilon$, then
the maximum influence radius of sites at distance $N$ from the origin is much larger than $N$ and one can show that the CMP always contains an infinite cluster.
\end{rema}

\begin{proof}[Proof of Proposition \ref{prop:CMPsubcritical}.]
The proof is again based on multiscale analysis. We define the ``good event'':
\[
\kE (N) \defeq \left\{\text{there exists a stable set $S$ such that } \, \lin N/5 \; ;\; 4N/5 \rin^d \subset S \subset \lin 1\; ;\; N \rin^d\right\}.
\]
We will show that, when $\lambda$ is small enough, events $\kE(N)$ occur for infinitely many $N$ almost surely.
\begin{figure}[!t]
\begin{center}
\includegraphics[width=14cm]{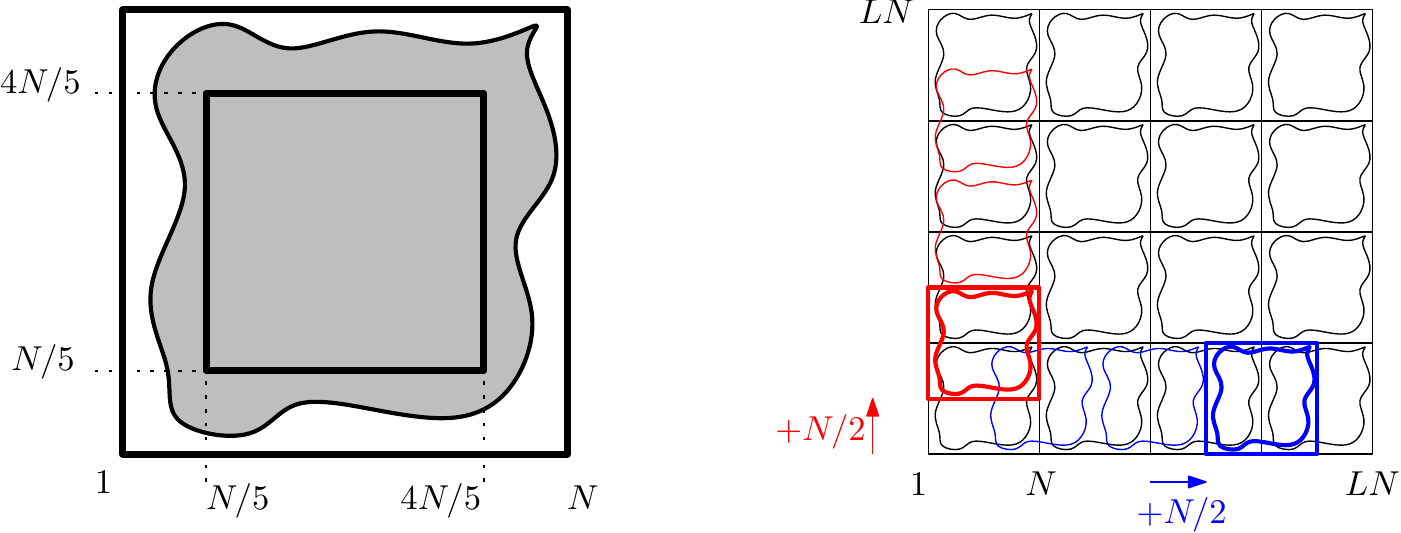}
\caption{\label{fig:goodevent}Left: An event $\kE(N)$, the grey set is stable. Right: Constructing $\kE(LN)$ from $\kE(N)$.}
\end{center}
\end{figure}

First, we remark that, since a union of stable sets is itself stable, for $N,L \in \N$, the event $\kE(LN)$ can be constructed by translating boxes of side length $N$ inside which $\kE(N)$ occurs. In order to cover the gaps on the boundaries of the smaller boxes, we cover the larger box using translations by $N/2$ for each coordinate, accounting for a total of $(2L-1)^d$ boxes of volume $N^d$. See Figure \ref{fig:goodevent} for an illustration. Of course, not every one of these $(2L -1)^d$ boxes covering $\lin 1 ; LN \rin^d$ needs to satisfy $\kE(N)$ in order for $\kE(LN)$ to be realized. We will say that the boxes for which $\kE(N)$ does not hold are ``bad boxes''. For $k \geq 0$, we define the event:
\[
\kE_k(N,L) \defeq \left\{
\begin{gathered}
\text{at most $k$ of the $(2L -1)^d$ boxes of side length $N$}\\
\text{covering $\lin 1 ; LN \rin^d$ are bad boxes}
\end{gathered}
\right\}.
\]
We focus on events of the form $\kE(R_n)$ and $\kE_k(R_n,L_{n+1})$ where
\begin{equation*}
L_n \defeq 2^{c^n} \quad\hbox{and}\quad R_n \defeq L_1 \ldots L_n
\end{equation*}
with $c \defeq 2\alpha d + 1$. We remark that
\begin{equation}\label{eqRnLn}
R_n \; = \; 2^{c+ c^2 + \ldots + c^n} \; = \;  2^{\frac{c^{n+1} -1}{c-1}-1} \; \leq \;  L_{n+1}^{\frac{1}{c-1}}.
\end{equation}
We also define
\begin{equation*}
\varepsilon_n \defeq 2^{- 2 d c^{n+1}}.
\end{equation*}
The next lemma estimates the number of bad boxes among the boxes of side length $R_n$ used to cover the larger box of side length $R_{n+1} = L_{n+1}R_n$.
\begin{lemm}
\label{lem:nbbadboxes}
Set $k_0 \defeq \lceil 2^{d+1}(c + 1) \rceil$. Suppose that for some $n \geq 1$, we have
$$\P \left\{ \kE \left(R_n \right) \right\} \geq 1 - \varepsilon_n.$$
Then, it holds that
\[
\P \left\{ \kE_{k_0} (R_{n},L_{n+1}) \right\} \geq 1 - \frac{1}{2} \varepsilon_{n+1}.
\]
\end{lemm}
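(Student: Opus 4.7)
The argument is a multiscale tail estimate whose only real input is the locality of $\kE(N)$. Because stability of a set $S$ depends only on the weights of its sites (as noted after the definition of stable sets) and because $\kE(N)$ requires $S \subset \lin 1; N\rin^d$, the event $\kE(R_n)$ is measurable with respect to the weights inside the associated box. Consequently, two translates of $\lin 1; R_n\rin^d$ with \emph{disjoint} supports in $\Z^d$ give rise to independent good events.

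The $(2L_{n+1}-1)^d$ candidate boxes covering $\lin 1; R_{n+1}\rin^d$ are obtained by translating $\lin 1; R_n\rin^d$ by vectors $(k_i R_n/2)_{1 \leq i \leq d}$ with $k_i \in \{0,\dots, 2L_{n+1}-2\}$. I would decouple them via a parity colouring: grouping by the parity of each $k_i$ produces $2^d$ colour classes of size at most $L_{n+1}^d$, and within one class two distinct translates differ by a multiple of $R_n$ (in fact by at least $R_n$) in some coordinate, so the corresponding boxes are pairwise disjoint. Thus the bad indicators inside a single colour class are i.i.d.\ Bernoulli of parameter at most $\varepsilon_n$. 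If the total number of bad boxes is at least $k_0 + 1$, pigeonhole on the $2^d$ classes yields a class with at least $k \defeq 2c + 3$ bad boxes, since $k_0 + 1 \geq 2^{d+1}(c+1) + 1$.

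A union bound over $k$-subsets inside one class, combined with the identity $L_{n+1}^d \varepsilon_n = 2^{dc^{n+1}} \cdot 2^{-2dc^{n+1}} = 2^{-dc^{n+1}}$, gives
\[
\P\{X_c \geq k\} \;\leq\; \binom{L_{n+1}^d}{k} \varepsilon_n^{k} \;\leq\; (L_{n+1}^d \varepsilon_n)^{k} \;=\; 2^{-(2c+3)dc^{n+1}}.
\]
Using $(2c+3)dc^{n+1} = 2dc^{n+2} + 3dc^{n+1}$, summing over the $2^d$ classes and recalling that $\varepsilon_{n+1} = 2^{-2dc^{n+2}}$, the probability we want to control is bounded by $\varepsilon_{n+1} \cdot 2^{d - 3dc^{n+1}}$, which is at most $\tfrac12 \varepsilon_{n+1}$ as soon as $3dc^{n+1} \geq d + 1$. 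This holds for every $n \geq 1$ since $c = 2\alpha d + 1 \geq 3$ gives $3dc^{n+1} \geq 27 d \geq d + 1$.

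The only non-mechanical step is the parity-colouring that restores independence among the overlapping boxes; once that is in place, the remaining estimates are routine bookkeeping with the geometric scales $L_n, R_n, \varepsilon_n$, whose exponents were chosen precisely so that the margin $3dc^{n+1}$ in the exponent comfortably absorbs the $2^d$ loss from the union over colour classes.
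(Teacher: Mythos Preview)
Your argument is essentially the paper's: the same $2^d$-colouring by parity of the half-shifts to restore independence, the same pigeonhole to locate a colour class with many bad boxes, and the same tail bound via $L_{n+1}^d\varepsilon_n = 2^{-dc^{n+1}}$. One small slip: pigeonhole only guarantees a class with $k' = \lceil (k_0+1)/2^d\rceil$ bad boxes, and when $c$ is not an integer this integer satisfies $k' > 2c+2$ but not necessarily $k' \geq 2c+3$; replacing the exponent $2c+3$ by $2c+2$ (as the paper does) leaves the required margin $2dc^{n+1} \geq d+1$, which holds just as easily.
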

\begin{proof}
The box $\lin 1 ; R_{n+1} \rin^d$ is covered by $(2L_{n+1}-1)^d$ smaller boxes of side length  $R_n$ shifted by multiples of $R_n/2$. These boxes intersect each other so the individual events that they are ``good boxes'' are not independent. However, we can partition this set of boxes into $2^d$ groups such that each group contains at most $L_{n+1}^d$ mutually disjoint boxes (\emph{i.e.} we put in the same group boxes translated by multiples of $R_n$ in each direction). Therefore, if the event $\kE_{k_0}(R_n,L_{n+1})$ fails, one of these $2^d$ groups has to contain at least $k'\defeq \lceil \frac{k_0+1}{2^d} \rceil \geq 2c + 2$ bad boxes. Using the fact that the events of being bad boxes are independent within each group of boxes, we find that
\begin{eqnarray*}
1 - \P \left\{ \kE_{k_0} (R_{n},L_{n+1}) \right\} & \leq  & 2^d \P \left\{ \hbox{Binom} \left(L_{n+1}^d, \varepsilon_n  \right) \geq k' \right\}\\
& \leq & 2^d \left( L_{n+1}^d  \varepsilon_n \right)^{k'}\\
& = & \frac{1}{2} 2^{d + 1 + d k'  c^{n+1} -2 d k' c^{n+1}}\\
& = & \frac{1}{2} 2^{d + 1 - k' d c^{n+1}  + 2d c^{n+2}}\; \varepsilon_{n+1}\\
& \leq & \frac{1}{2}\varepsilon_{n+1}
\end{eqnarray*}
where we used that $d + 1 - k' d c^{n+1}  + 2d c^{n+2} \leq 0$ when $k' \geq 2c + 2$ for the last inequality.
\end{proof}
We introduce another family of events meant to control to total sum of the influence radii inside a box:
$$
\kA(N,L) \defeq \Big\{ \sum_{x\in \lin 1 ; N \rin^d }r(x) \leq L^{\frac{1}{\alpha}} \Big\}
$$
and we set
$$
\kG(N,L) \defeq \left\{
\begin{gathered}
\hbox{each of the $(2L-1)^d$ sub-boxes of side length $N$}\\
\hbox{covering the box $\lin 1 ; NL \rin^d$ satisfies $\kA(N,L)$}
\end{gathered}
\right\} .
$$
\begin{lemm}\label{lem:maxrad} There exists $n_0$ such that for all $n\geq n_0$, uniformly on $\lambda \leq 1$, we have
$$
\P\{\kG(R_{n},L_{n+1})\}
\geq 1 - \frac{1}{2}\varepsilon_{n+1}
$$
\end{lemm}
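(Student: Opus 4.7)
The plan is to reduce the statement to a single-box moment estimate, then use the high-order moment assumption $\mathbf{E}[Z^{\beta_0}]<\infty$ together with a union bound over the $(2L_{n+1}-1)^d$ sub-boxes appearing in the definition of $\kG$. Since the weights $(r(x))$ are i.i.d. with common law $\lambda Z$, and the set $\lin 1; R_n \rin^d$ contains $R_n^d$ vertices, the key quantity to control is
\[
\P\{\kA(R_n,L_{n+1})^c\} \;=\; \P\Big\{\sum_{i=1}^{R_n^d} \lambda Z_i > L_{n+1}^{1/\alpha}\Big\}.
\]
First I would apply Markov's inequality at the order $\beta_0 = (4\alpha d)^2$, combined with the convexity bound $(\sum_{i=1}^m x_i)^{\beta_0}\leq m^{\beta_0-1}\sum x_i^{\beta_0}$, which gives
\[
\P\{\kA(R_n,L_{n+1})^c\} \;\leq\; \lambda^{\beta_0}\,\mathbf{E}[Z^{\beta_0}]\cdot \frac{R_n^{d\beta_0}}{L_{n+1}^{\beta_0/\alpha}}.
\]

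The second step is to plug in the crucial inequality \eqref{eqRnLn}, namely $R_n \leq L_{n+1}^{1/(c-1)} = L_{n+1}^{1/(2\alpha d)}$, which yields $R_n^{d\beta_0}\leq L_{n+1}^{\beta_0/(2\alpha)}$ and hence
\[
\P\{\kA(R_n,L_{n+1})^c\}\;\leq\; \lambda^{\beta_0}\,\mathbf{E}[Z^{\beta_0}]\cdot L_{n+1}^{-\beta_0/(2\alpha)}
\;=\;\mathbf{E}[Z^{\beta_0}]\,\lambda^{\beta_0}\cdot 2^{-8\alpha d^2\, c^{n+1}},
\]
using $\beta_0/(2\alpha) = 8\alpha d^2$ and $L_{n+1}=2^{c^{n+1}}$. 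Since $\lambda \leq 1$, the factor $\lambda^{\beta_0}$ can be dropped, giving a bound that is uniform in $\lambda\in[0,1]$.

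Next, a union bound over the $(2L_{n+1}-1)^d\leq (2L_{n+1})^d = 2^{d+dc^{n+1}}$ sub-boxes produces
\[
\P\{\kG(R_n,L_{n+1})^c\} \;\leq\; \mathbf{E}[Z^{\beta_0}]\cdot 2^{\,d+(d-8\alpha d^2)c^{n+1}}.
\]
Recalling $\varepsilon_{n+1}=2^{-2dc^{n+2}}$ and $c=2\alpha d+1$, comparing exponents shows it suffices to verify that
\[
d+(d-8\alpha d^2)c^{n+1}+2dc^{n+2} \;=\; d + c^{n+1}\bigl(d-8\alpha d^2+2dc\bigr) \;=\; d + c^{n+1}\cdot d(3-4\alpha d)
\]
tends to $-\infty$ as $n\to\infty$. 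Since $\alpha\geq 1$ and $d\geq 1$, the coefficient $d(3-4\alpha d)$ is strictly negative, so the bound $\tfrac{1}{2}\varepsilon_{n+1}$ holds once $n$ exceeds an $n_0$ depending only on $\alpha,d,\mathbf{E}[Z^{\beta_0}]$.

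There is no real obstacle here: the entire argument is a calibration check that the moment exponent $\beta_0=(4\alpha d)^2$ is large enough, relative to the geometric scale growth $c=2\alpha d+1$ chosen in \eqref{eqRnLn}, for Markov's inequality to beat the union bound with room to spare for the target decay $\varepsilon_{n+1}$. The only subtlety to keep in mind is that one uses $\lambda\leq 1$ solely to absorb the factor $\lambda^{\beta_0}\leq 1$, so the estimate is genuinely uniform on $[0,1]$ rather than merely for small $\lambda$.
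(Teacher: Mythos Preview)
Your argument is correct and follows essentially the same route as the paper: a crude moment bound on $\P\{\kA(R_n,L_{n+1})^c\}$ via Markov's inequality at order $\beta_0$, followed by a union bound over the $(2L_{n+1}-1)^d$ sub-boxes, and a final check that the resulting exponent of $L_{n+1}$ is negative. The only cosmetic difference is that the paper bounds the tail of the sum by first reducing to a single large term (if $\sum r(x)>L^{1/\alpha}$ then some $r(x)\geq L^{1/\alpha}/N^d$) before applying Markov, whereas you apply the convexity bound $(\sum x_i)^{\beta_0}\leq m^{\beta_0-1}\sum x_i^{\beta_0}$ directly; your bound is marginally sharper (by a factor $R_n^d$) but both lead to the same conclusion after plugging in \eqref{eqRnLn}.
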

\begin{proof} The proof uses only crude estimates and union bound. First, when $\kA(N,L)$ fails, then there is at least one site $x \in \lin 1 ; N \rin^d$ such that $r(x) \geq \frac{L^{1/\alpha}}{N^d}$. Consequently, we have
\begin{eqnarray*}
1 - \P\{ \kA(R_n,L_{n+1}) \} &\leq& R_n^d \P\Big\{\lambda Z \geq \frac{(L_{n+1})^{\frac{1}{\alpha}}}{R_n^d} \Big\} \\
&\leq & \frac{\E[Z^{\beta_0}] R_n^{d(1 + \beta_0)}}{(L_{n+1})^{\frac{\beta_0}{\alpha}}}\\
&\leq& \E[Z^{\beta_0}](L_{n+1})^{\frac{d(1 + \beta_0)}{c-1}-\frac{\beta_0}{\alpha}}
\end{eqnarray*}
where we used Markov's inequality and $\lambda \leq 1$ for the second inequality and \eqref{eqRnLn} for the last one. Now, using again the union bound, we find that
\begin{eqnarray*}
1 - \P\{\kG(R_{n},L_{n+1})\}
&\leq& (2L_{n+1}-1)^d \left(1 - \P\{ \kA(R_n,L_{n+1})\}\right)\\
 &\leq& 2^d \E[Z^{\beta_0}]
 (L_{n+1})^{d + \frac{d(1 + \beta_0)}{c-1}-\frac{\beta_0}{\alpha}}\\
 &=& 2^d \E[Z^{\beta_0}] \varepsilon_{n+1}
(L_{n+1})^{d + \frac{d(1 + \beta_0)}{c-1}-\frac{\beta_0}{\alpha} + 2dc}.
\end{eqnarray*}
Recalling that $c \defeq 2\alpha d + 1$, the exponent of $L_{n+1}$ in the formula above is equal to
$$
3d + \frac{1}{2\alpha} + 4\alpha d^2 - \frac{\beta_0}{2\alpha} \;\leq\; 8\alpha d^2 - \frac{\beta_0}{2\alpha} \;<\; 0
$$
which completes the proof of the lemma.
\end{proof}

\begin{lemm}
\label{lem:radbadboxes} Let $k_0$ be as in Lemma \ref{lem:nbbadboxes}. There exists $n_1 > n_0$ such that for all $n \geq n_1$, uniformly in $\lambda \leq 1$, it holds that
\begin{equation*}
\kE_{k_0} (R_{n},L_{n+1}) \cap \kG(R_{n},L_{n+1}) \subset \kE(R_{n+1}).
\end{equation*}
\end{lemm}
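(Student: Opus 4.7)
My plan is to combine the good-box stable sets using Proposition~\ref{prop:unionstable}, then absorb a carefully chosen subset of bad boxes via the étouffement Proposition~\ref{prop:etouffement}. Event $\kG$ provides the crucial bound on the total influence radius of the bad boxes.

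Let $(B_i)$ be the $(2L_{n+1}-1)^d$ sub-boxes of side $R_n$, shifted by multiples of $R_n/2$, covering $\lin 1; R_{n+1}\rin^d$, and $C_i \subset B_i$ the corresponding translate of $\lin R_n/5; 4R_n/5\rin^d$. For each good $i$, the event $\kE(R_n)$ supplies a stable set $S_i$ with $C_i \subset S_i \subset B_i$; define $S^{\mathrm{good}} \defeq \bigcup_{i \text{ good}} S_i$, stable by Proposition~\ref{prop:unionstable}. A short geometric computation (adjacent centers overlap by $R_n/10$ in each coordinate, since the shift $R_n/2$ is less than the center side $3R_n/5$) gives $\bigcup_i C_i \supset \lin R_n/5; R_{n+1} - R_n/5\rin^d \supset \lin R_{n+1}/5; 4R_{n+1}/5\rin^d$.

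I will build $\widetilde{W}$ as $S^{\mathrm{good}}$ together with a well-chosen collection of bad-box vertices. Every point of the central region lies in some $C_i$; if $i$ is good the point lies in $S^{\mathrm{good}}$, otherwise one must include the bad box $B_i$ in $\widetilde{W}$. To make Proposition~\ref{prop:etouffement} apply cleanly, I enlarge the set of included bad boxes iteratively: start with bad boxes meeting $\lin R_{n+1}/5; 4R_{n+1}/5\rin^d$ and, at each step, add any bad box within graph distance $R_n + k_0^\alpha L_{n+1}$ of the current set. Since there are at most $k_0$ bad boxes, the process terminates in at most $k_0$ steps at some set $W_*$. Set $\widetilde{W} \defeq S^{\mathrm{good}} \cup W_*$ and $W \defeq W_* \setminus S^{\mathrm{good}}$; then $\widetilde{W} \setminus W = S^{\mathrm{good}}$ is stable, $\widetilde{W} \subset \lin 1; R_{n+1}\rin^d$, $\widetilde{W} \supset \lin R_{n+1}/5; 4R_{n+1}/5\rin^d$ by construction, and $\kG$ yields $r(W) \leq k_0 L_{n+1}^{1/\alpha}$, hence $r(W)^\alpha \leq k_0^\alpha L_{n+1}$.

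The main obstacle is verifying the étouffement condition $B(W, k_0^\alpha L_{n+1}) \subset \widetilde{W}$. Any point $q$ in this ball lies within $k_0^\alpha L_{n+1}$ of $W_*$; using $R_{n+1} = L_{n+1} R_n$ and $R_n \to \infty$, the margin $R_{n+1}/5 - R_n - k_0(R_n + k_0^\alpha L_{n+1})$ exceeds $R_n/5$ for $n \geq n_1$ large enough, so $q$ still lies in $\lin R_n/5; R_{n+1} - R_n/5\rin^d$, hence in some $C_{i'}$. If $B_{i'}$ is good then $q \in S^{\mathrm{good}}$; if $B_{i'}$ is bad, then $B_{i'}$ is within $R_n$ of $W_*$, so by the closure property built into the iteration $B_{i'} \subset W_*$ and $q \in W_*$. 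Proposition~\ref{prop:etouffement} then certifies that $\widetilde{W}$ is stable, establishing $\kE(R_{n+1})$. The quantitative heart of the argument is the single inequality $k_0^\alpha L_{n+1} \ll R_{n+1}/5$, which dictates the choice of $n_1$.
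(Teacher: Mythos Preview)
Your proof is correct and follows essentially the same approach as the paper: form the union $S^{\mathrm{good}}$ of the stable sets in good sub-boxes, absorb a suitable collection of bad boxes using Proposition~\ref{prop:etouffement}, and use $\kG$ to bound the resulting influence radius by $k_0^\alpha L_{n+1} \ll R_{n+1}$. The only cosmetic difference is that the paper selects the bad boxes to absorb as those \emph{not connected to the outside} of $\lin R_{n+1}/10;9R_{n+1}/10\rin^d$ through the thickened sets $\widetilde{B}_i = B(B_i, R^\alpha)$, whereas you build $W_*$ by an iterative closure under proximity; both constructions isolate the same ``interior'' bad boxes. One small slip: in the last line you write ``$B_{i'}$ is within $R_n$ of $W_*$'', but what you actually have is $d(B_{i'},W_*)\leq d(q,W_*)\leq k_0^\alpha L_{n+1}$, which is still below your closure threshold $R_n+k_0^\alpha L_{n+1}$, so the conclusion stands.
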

\begin{proof}
The idea behind this inclusion is the following: on $\kE_{k_0} (R_{n},L_{n+1})$, there are at most $k_0$ sub-boxes which do not contain a stable set. On the other hand, on  $\kG(R_{n},L_{n+1})$, the influence radius of these non stable sets is negligible compared to the diameter of the big box. Therefore, either a bad box is close to the boundary and it does not interfere with a stable set in the center of the box or it is at a macroscopic distance from the boundary in which case it is contained in a stable set according to Proposition \ref{prop:etouffement}.

More precisely, suppose that we are on the event $\kE_{k_0} (R_{n},L_{n+1}) \cap \kG(R_{n},L_{n+1})$. Let  $k \leq k_0$ be the number of bad boxes. We denote them by $B_1,\ldots, B_k$ and set
$$
R \defeq \sum_{x \in \bigcup_i B_i}r(x).
$$
Let $n\geq n_0$. Since we are on $\kG(R_{n},L_{n+1})$, Jensen's inequality gives
\begin{equation*}
R^\alpha \;\leq\; k^{\alpha-1}\sum_{i=1}^k \left(\sum_{x \in B_i}r(x)\right)^\alpha \;\leq\; k_0^\alpha L_{n+1}.
\end{equation*}
\begin{figure}[!t]
\begin{center}
\includegraphics[width=8cm]{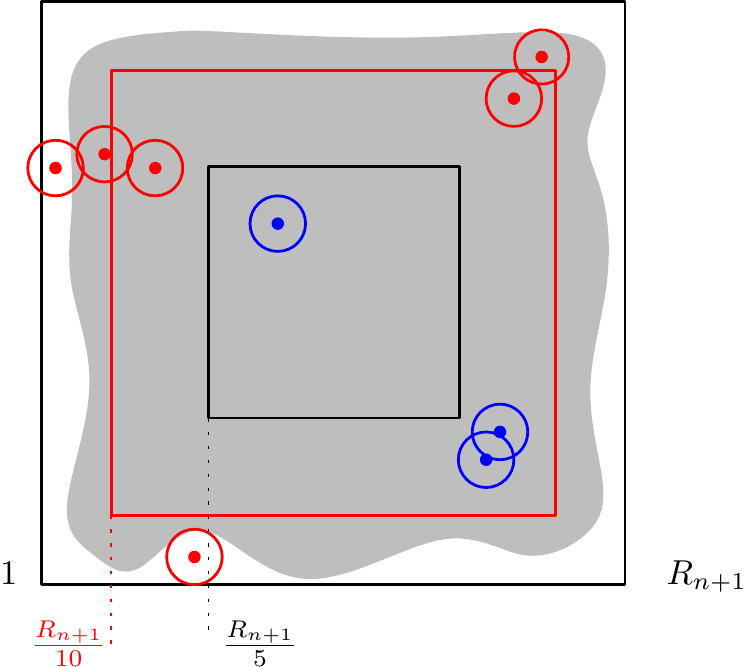}
\caption{\label{fig:badboxes}In grey: the union of all the stable sets inside good sub-boxes. In red: the bad boxes $B_i$ and their neighborhoods $\widetilde{B}_i$ which are connected to the outside. In blue: the bad boxes $B_j$ and their neighbourhoods $\widetilde{B}_j$ which are not connected to the outside.The union of the grey set and the blue bad boxes is a stable set which contains the inner region delimited by the black square.}
\end{center}
\end{figure}
We consider a neighbourhood around each of the boxes of the form:
$$
\widetilde{B}_i \defeq \{ x\in\Z^d\, : _, d(x,B_i) \leq R^\alpha\}.
$$
From the triangle inequality, it comes that $\diam(\widetilde{B}_i) \leq 2 R^\alpha + R_n \leq 3 k_0^\alpha L_{n+1}$. Consequently, for $n$ large enough,
\begin{equation}\label{connectedout}
\sum_{i=1}^{k} \diam(\widetilde{B}_i) \;\leq \; 3k_0^{\alpha+1} L_{n+1} \; < \; \frac{R_{n+1}}{20}.
\end{equation}
We say that a bad box $B_i$ is connected to the outside if there exists a path from some vertex of $B_i$ to some vertex outside of the box $\lin R_{n+1}/10\; ; \; 9R_{n+1}/10 \rin^d$ which stays inside $\bigcup_{j}\widetilde{B}_j$.

Let $S_0$ be the union of all the stable sets of the good sub-boxes of $\lin 1 ; R_{n+1}\rin^d$. Let $S_1$ be the union of $S_0$ together with all the bad boxes that are not connected to the outside. See Figure \ref{fig:badboxes} for an illustration. The set $S_0$ is stable as a union of stable sets. Moreover, by definition $W \defeq S_1 \setminus S_0$ is included in the union of all the bad boxes hence
\begin{equation}\label{boots1}
\sum_{x\in W} r(x) \leq R.
\end{equation}
Since we added in $S_1$ only the bad boxes which are not connected to the outside, we have
\begin{equation}\label{boots2}
B(W,R^\alpha) \subset S_1.
\end{equation}
In view of Proposition \ref{prop:etouffement}, inequality \eqref{boots1} combined with \eqref{boots2} implies that $S_1$ is also stable. Finally, inequality \eqref{connectedout} says that any bad box connected to the outside is at distance at most $\frac{R_{n+1}}{20}$ from the outside of the box $\lin R_{n+1}/10 \; ; \; 9R_{n+1}/10 \rin^d$. This implies
$$
S_1 \supset \lin R_{n+1}/5 \; ; \; 4R_{n+1}/5\rin^d
$$
which completes the proof of the lemma.
\end{proof}

We can now finish the proof of the proposition. As $\lambda$ tend to zero, we have $\lambda r(x)\to 0$ for all $x\in\Z^d$. Since $\kE \left(R_{n_1}\right)$ depends only the values of $r$ for finitely many sites, it follows that
$$
\lim_{\lambda\to 0} \P\left\{ \kE \left(R_{n_1} \right) \right\} = 1
$$
so we can fix $\lambda >0$ such that
$$
\P\left\{\kE \left(R_{n_1} \right)\right\} \geq 1 - \varepsilon_{n_1}.
$$
We prove by induction that the same inequality holds for all $n\geq n_1$. Indeed, if it holds for $n$, then, combining Lemmas \ref{lem:nbbadboxes},\ref{lem:maxrad} and \ref{lem:radbadboxes}, we find that
\begin{equation*}
\P\left\{\kE \left(R_{n+1}  \right)\right\} \;\geq\; 1 -
\left(1-\P\left\{\kE_{k_0} (R_{n},L_{n+1})\right\}\right)
- \left(1-\P\left\{\kG(R_{n},L_{n+1})\right\}\right)
\;\geq\; 1 - \varepsilon_{n+1}.
\end{equation*}
By translation invariance and the Borel-Cantelli lemma we conclude that
$$
\P\left\{ \hbox{there exists a finite stable set containing $\lin -N ; N \rin^d$} \right\} = 1
$$
for every $N$ which implies that the CMP has no infinite cluster.
\end{proof}

\subsection{Phase transition on random geometric graphs and Delaunay triangulations}\label{sec:RGG and Delaunay}

We now explain how to extend Proposition \ref{prop:CMPsubcritical} for the degree-weighted CMP (Model \ref{DegCMP}) when the graph $G$ is either a random geometric graph or a Delaunay triangulation constructed from a Poisson point process $\kP$ in $\R^d$ with Lebesgue intensity. First, we quickly recall the definition of these graphs:

\bigskip

$\bullet$ \emph{Geometric graph with parameter $R > 0$.} The vertex set is composed of the atoms of the point process $\kP$ and, for any pair of points $x,y \in \kP$, there is an edge between $x$ and $y$ if and only if $\|x-y \| < R$, where $\| \cdot \|$ denotes the Euclidian norm in $\R^d$. If $R$ is above the critical parameter for continuum percolation, then this graph has a unique infinite connected component (see for instance \cite{MR96}). We assume this is the case and denote this graph $\mathcal{G}(R,\mathcal{P})$.

\bigskip

$\bullet$ \emph{Delaunay Triangulation.} For any $x\in \kP$, we define the Voronoï cell of $x$ as the set of points of $\R^d$ which are closer to $x$ than to any other point of the Poisson point process:
\[
\mathrm{Vor}_{\kP}(x) \defeq \left\{ z \in \R^d : \|x-z\| < \| y-z\| \, \forall y \in \kP \right\}.
\]
The Delaunay triangulation of $\kP$ is the dual of the Voronoï tessellation: its vertex set is again the set of atoms of $\kP$ and two vertices share an edge if and only if their corresponding cells are adjacent \emph({i.e.} they share a $d-1$ dimensional face). We denote this graph by $\mathcal{D}(\kP)$. Since the points of the Poisson process are almost surely in general position in $\R^d$, this triangulation is also characterized by the following property: for any simplex of $\mathcal{D}(\kP)$, its circumscribed sphere contains no point of $\kP$ in its interior.

\begin{prop}
\label{prop:CMPDelaunay}
Consider the CMP on $\mathcal{G}(R,\kP)$ or on $\mathcal{D}(\kP)$ with weights given by
$$
r(x) \defeq \hbox{\textup{deg}}(x)\ind_{\{\hbox{\textup{deg}}(x) \geq \Delta\}}
$$
and with expansion exponent $\alpha \geq 1$
Then, we have
\[
\Delta_c (\alpha) <\infty.
\]
\end{prop}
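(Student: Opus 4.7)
The plan is to adapt the multiscale analysis of Proposition \ref{prop:CMPsubcritical} to this continuous setting, with the parameter $\Delta \to \infty$ playing the role that $\lambda \to 0$ played before. In both cases, the underlying mechanism is that as the parameter moves toward its critical direction, sites with non-zero weight become very rare, so the finite stable pieces needed in Proposition \ref{prop:etouffement} can be built around isolated high-degree clusters.

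The first step is to establish degree tail bounds strong enough to drive the argument. In the random geometric graph $\mathcal{G}(R,\kP)$, the Palm distribution of $\deg(x)$ is stochastically dominated by a Poisson variable of parameter $\kappa_d R^d$, so $\P\{\deg(x) \geq \Delta\}$ is super-exponentially small in $\Delta$ and all moments are finite. For the Delaunay triangulation $\mathcal{D}(\kP)$, exponential moments of $\deg(x)$ are a standard consequence of the stabilization properties of Voronoï constructions. Hence, for any fixed $\beta_0 \defeq (4\alpha d)^2$, we have $\E[\deg(x)^{\beta_0}] < \infty$, which is the analogue of the moment hypothesis in Proposition \ref{prop:CMPsubcritical}.

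Next, I would replicate the renormalization scheme: tile $[0,N]^d$ by $(2L-1)^d$ overlapping sub-cubes shifted by multiples of $N/2$ at every scale $R_n = L_1 \cdots L_n$ with $L_n = 2^{c^n}$, $c = 2\alpha d + 1$, and consider
\[
\kE(N) \defeq \{\exists\, S \text{ stable with } [N/5,4N/5]^d \cap \kP \subset S \subset [0,N]^d \cap \kP\}.
\]
The seed inequality $\P\{\kE(R_{n_1})\} \geq 1 - \varepsilon_{n_1}$ follows by taking $\Delta$ so large that, with probability at least $1 - \varepsilon_{n_1}$, no point of $\kP \cap [0,R_{n_1}]^d$ has degree $\geq \Delta$; in that case all weights inside the cube vanish and the trivial partition is stable. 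The induction step $\kE(R_n) \Rightarrow \kE(R_{n+1})$ follows the architecture of Lemmas \ref{lem:nbbadboxes}--\ref{lem:radbadboxes}. The analogue of Lemma \ref{lem:maxrad} is obtained by using $\E[\deg(x)^{\beta_0}] < \infty$ together with Campbell's formula, which controls $\sum_{x \in \kP \cap Q} r(x)$ just as effectively as the i.i.d. union bound in the original proof, and Lemma \ref{lem:radbadboxes} is purely geometric (it only invokes Proposition \ref{prop:etouffement}) and transfers verbatim.

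The main obstacle is the independence argument in Lemma \ref{lem:nbbadboxes}, which is no longer literally valid since weights are functionals of the whole Poisson process. In the geometric graph case the fix is immediate: the event $\kE(N)$ restricted to a cube depends on $\kP$ only through $\kP \cap B([0,N]^d, R)$, so sub-cubes separated by more than $2R$ yield independent events and the thin boundary corridor is absorbed into the error terms. For the Delaunay triangulation, one uses stabilization: the combinatorial neighbourhood of a point $x$ is determined by $\kP \cap B(x, L)$ with probability at least $1 - e^{-c L^d}$, so the event $\kE(R_n)$ can be replaced by a local version depending only on $\kP \cap B([0,R_n]^d, L_n)$ with an exponentially small cost per sub-cube, which is negligible compared to $\varepsilon_n$. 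The multiscale recursion then closes, and Borel--Cantelli combined with translation invariance yields, for $\Delta$ large enough, the existence of arbitrarily large stable sets around the origin and hence $\Delta_c(\alpha) < \infty$.
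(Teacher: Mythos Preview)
Your proposal is correct and follows essentially the same architecture as the paper's proof: same multiscale parameters, same seed argument via $\Delta\to\infty$, same observation that Lemmas~\ref{lem:maxrad} and~\ref{lem:radbadboxes} need only union bounds and geometric reasoning, and the same identification of Lemma~\ref{lem:nbbadboxes} as the place where independence must be recovered.

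The one substantive difference is how you handle the Delaunay case in Lemma~\ref{lem:nbbadboxes}. You invoke generic stabilisation, replacing $\kE(R_n)$ by a version depending only on $\kP$ in an $\ell$-enlargement of the cube (your ``$L_n$'' here clashes with the already-defined $L_n=2^{c^n}$; you should use a fresh symbol). This works, but it forces you to further separate sub-cubes by at least $2\ell$ before they become independent, so the $2^d$ groups of disjoint cubes must be refined into more groups. The paper instead proves a dedicated localisation lemma (Lemma~\ref{lemm:delaunay}): it builds an event $\widetilde{\mathcal I}(N,\eta)$ depending \emph{only on $\kP$ inside the annulus} $[0,N]^d\setminus[\eta N,(1-\eta)N]^d$, on which no Delaunay edge crosses from the inner region to the outside of the cube. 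Conditioning on every sub-cube satisfying $\widetilde{\mathcal I}$, the events $\kE(R_n)$ for \emph{disjoint} sub-cubes are already independent, so the original $2^d$-group partition survives unchanged. Your route is more modular (it would apply to any stabilising functional), while the paper's is self-contained and avoids the extra separation bookkeeping.

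One minor point: invoking Campbell's formula alone for the analogue of Lemma~\ref{lem:maxrad} is not quite enough, since you need a tail bound on $\sum_{x\in\kP\cap Q} r(x)$, not just its expectation. The paper's argument (if the sum is large then some single term is large, then Markov on $\deg^{\beta_0}$, together with a Poisson bound on the number of points in the cube) goes through unchanged here and is what you should spell out.
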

The proof is very similar to the proof of Proposition \ref{prop:CMPsubcritical}. It would be redundant to write everything again so we will simply point out the modifications needed to adapt the proof for these random graphs. The main difference (and difficulty) in our new setting  comes from the fact that two portions of the graph included in disjoint domains of $\R^d$ are not independent anymore. The modification needed for the geometric graph are minor since sub-graphs included in domains separated by a distance larger than $R$ are still independent. The situation is a little more complex for Delaunay triangulations so we will concentrate on this case. Details for the geometric graph are left to the reader.

\begin{proof}[Sketch of the proof]
Recall the parameters of the proof of Proposition \ref{prop:CMPsubcritical}:
\begin{equation*}
R_n \defeq L_1 \ldots L_n;  \quad L_n \defeq 2^{c^n}; \quad \varepsilon_n \defeq 2^{- 2 d c^{n+1}}
\end{equation*}
for $c$ suitably chosen and depending only on $\alpha$ and $d$.
We adapt the definition of good events to the new setting:
\[
\kE (R_n) \defeq \left\{
\begin{gathered}
\text{there exists a stable set $S$ such that } \\
\left( \mathcal{D}(\kP) \cap [ R_n/5 , 4 R_n /5 ]^d \right)
\subset S \subset
\left( \mathcal{D}(\kP) \cap [ 0, R_n ]^d \right)
\end{gathered}
\right\}.
\]
We want to prove by induction that for $n$ large enough
\[
\P \left\{ \kE (R_n) \right\} \geq 1 - \varepsilon_n.
\]
Once we have proved that the three lemmas \ref{lem:nbbadboxes}, \ref{lem:maxrad} and \ref{lem:radbadboxes} hold for our random graph, the result follows \emph{mutatis mutandis}.

\bigskip

Lemma \ref{lem:radbadboxes} is a statement concerning stable sets of the CMP and it does not really depend on the particular nature of the graph. It is straightforward to adapt it to the case of Delaunay triangulations and other random graphs embedded in $\R^d$.

\bigskip

Lemma \ref{lem:maxrad} is also easily translated to our case. This lemma gives an estimate for the probability  that every box has a reasonable total radius. As we already pointed out, it uses only the union bound so it does not require any kind of independence assumption. It also requires that the weights admit polynomial moments of high enough order. This is not a problem here since
the typical distribution of the degree of a site for the geometric graph and the Delaunay triangulation has, in fact, exponential moments: this is straightforward for $\mathcal{G}(R,\kP)$ since the degree of a site is bounded by the number of  atoms of $\kP$ inside a ball of radius $R$. For $\mathcal{D}(\kP)$, this result is proved in \cite{Z} (and it also follows from Lemma \ref{lemm:delaunay} which we will prove later on). The last point to check is that we can upper bound the number of vertices inside a box. For the lattice case $\Z^d$, this number was deterministic and equal to the volume of the box. In our new setting, it is still  straightforward since the number of vertices is simply the number of atoms of $\kP$ and hence it follows a Poisson distribution with parameter equal to the volume of the box. In particular, this distribution has light tails which is more than we need.

\bigskip

Lemma \ref{lem:nbbadboxes} requires a bit more work. It controls the number of bad boxes in $[0, R_{n+1}]^d$. Recall that it states that, when
\[
\P \left\{ \kE \left(R_n\right) \right\} \geq 1 - \varepsilon_n,
\]
then it holds that
\[
\P \left\{ \kE_{k} (R_{n},L_{n+1}) \right\} \geq 1 - \frac{1}{2} \varepsilon_{n+1}
\]
where
\[
\kE_k(R_{n},L_{n+1}) \defeq \left\{
\begin{gathered}
\text{at most $k$ of the boxes of side length $R_n$}\\
\text{covering $[0,R_{n+1}]^d$ are bad boxes}
\end{gathered}
\right\}.
\]
For the lattice $\Z^d$, we proved this by partitioning the set of small boxes covering $[0,R_{n+1}]^d$ into groups containing disjoint boxes. Then we used the fact that good events were independent for disjoint boxes to compare the number of bad boxes with a binomial distribution. We cannot do this directly now since we do not have independence of events occurring in disjoint boxes any more. This problem is easy to overcome for the random geometric graph as we simply partition the set of boxes into more groups in such way that two boxes in the same group are at distance at least $R$. The adaptation of the lemma in the case of the Delaunay triangulation is a bit more involved. Consider the events
\[
\mathcal{I}(N,\eta) \defeq \left\{
\begin{gathered}
\text{There are no edges linking vertices of $\kP \cap [\eta N, (1- \eta) N]^d$}\\
\text{with vertices of $\kP \cap (\R^d\setminus [0,N]^d)$}
\end{gathered}
 \right\}.
\]

\begin{lemm}
\label{lemm:delaunay}
Let $\eta <1/5$. There is an event $ \widetilde{\mathcal{I}}(N,\eta)$ depending only on the points of $\kP$ inside the annulus $A_\eta \defeq [0,N]^d \setminus [\eta N, (1- \eta) N]^d$ with the following properties:
\begin{gather*}
\widetilde{\mathcal{I}}(N,\eta) \subset \mathcal{I}(N,\eta),\\
\P \left\{ \widetilde{\mathcal{I}}(N,\eta) \right\} \geq 1 - C e^{- C N^d}
\end{gather*}
where $C>0$ is a constant depending only on $\eta$ and $d$.
\end{lemm}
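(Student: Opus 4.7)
The plan is to exploit the empty-ball characterization of Delaunay edges: $\{x,y\}$ is an edge of $\mathcal{D}(\kP)$ if and only if there exists an open ball with $x, y$ on its boundary whose interior contains no other atom of $\kP$. Since $x \in [\eta N, (1-\eta)N]^d$ and $y \notin [0,N]^d$, any such ball $B$ has radius $R \geq \|x-y\|/2 \geq \eta N/2$. The strategy is to design an event measurable with respect to $\kP \cap A_\eta$ that fills the annulus densely enough to forbid any such large empty ball.

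The geometric heart of the argument, which I expect to be the main obstacle, is the following claim: there exists a constant $\rho = \rho(\eta, d) > 0$ such that, for every $x \in [\eta N, (1-\eta)N]^d$, every $y \notin [0,N]^d$, and every ball $B$ with $\{x, y\} \subset \partial B$, the set $B \cap A_\eta$ contains a Euclidean ball of radius $\rho N$. The chord $[x,y] \cap A_\eta$ has length at least $\eta N$ (the width of the annulus). Two regimes appear: when $R$ is of order $\eta N$, the fat central region of $B$ overlaps substantially with $A_\eta$ near the midpoint of this chord and yields a ball of radius of order $\eta N$ inside $B \cap A_\eta$; when $R \gg \eta N$, the ball $B$ locally behaves like a half-space bounded by a hyperplane passing through $\{x, y\}$, and the corresponding half of $A_\eta$ contains an entire macroscopic face-slab of width $\eta N$, again providing a ball of radius of order $\eta N$. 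An elementary but somewhat tedious case analysis on $R/(\eta N)$ produces a uniform $\rho$.

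Granted this geometric claim, the rest is standard. Pick a grid of points in the shrunk annulus $\{z \in A_\eta : d(z, \partial A_\eta) \geq \rho N/2\}$ with spacing $\rho N / \sqrt{d}$; this yields finitely many balls $B_1, \ldots, B_M \subset A_\eta$ of radius $\rho N/2$, with $M = M(\eta, d)$ independent of $N$. Any ball of radius $\rho N$ lying inside $A_\eta$ entirely contains at least one $B_i$, since its center is within $\rho N/2$ of some grid point. Define
\[
\widetilde{\mathcal{I}}(N, \eta) \defeq \bigl\{ \kP \cap B_i \neq \emptyset \text{ for every } i = 1, \ldots, M \bigr\}.
\]
This event depends only on $\kP \cap A_\eta$, and by the Poisson void probability combined with a union bound,
\[
\P\bigl(\widetilde{\mathcal{I}}(N,\eta)^c\bigr) \;\leq\; M \exp\bigl(-\omega_d (\rho N/2)^d\bigr) \;\leq\; C e^{-C N^d}
\]
for a suitable $C = C(\eta, d) > 0$.

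Finally, on the event $\widetilde{\mathcal{I}}(N, \eta)$, suppose by contradiction that some $x$ in the inner box and some $y$ outside the outer box were joined by an edge of $\mathcal{D}(\kP)$. The associated empty ball $B$ would satisfy the hypotheses of the geometric claim and therefore contain some $B_i$ entirely, hence an atom of $\kP$. This contradicts the emptiness of $B$, so $\widetilde{\mathcal{I}}(N, \eta) \subset \mathcal{I}(N, \eta)$ and the proof is complete.
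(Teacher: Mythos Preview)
Your proposal is correct and follows essentially the same approach as the paper: both arguments use the empty-sphere characterization of Delaunay edges, establish that any such empty sphere must contain a macroscopic region inside the annulus $A_\eta$, define $\widetilde{\mathcal{I}}$ as the event that every cell of a fixed finite grid in $A_\eta$ contains an atom, and conclude via a union bound on Poisson void probabilities. The only cosmetic differences are that the paper uses square boxes rather than balls for the grid, and that the paper handles the geometric claim slightly differently---it restricts attention to the smaller sphere with diameter $[x,y]\cap A_\eta$ (one hemisphere of which lies inside the original empty sphere) and finds a box of side $\delta N$ in the intersection of that hemisphere with $A_\eta$, whereas you sketch a two-regime case analysis on the radius; both are left at the level of ``elementary but tedious'' geometry.
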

Let us first explain how we use this result to get the desired estimate. Let $\vartheta_n$ denote the probability that a small box does not satisfy $\widetilde{I}(R_n,\eta)$. By union bound, the probability that one of the boxes covering the larger box does not satisfies $\tilde{I}(R_n,\eta)$ is smaller than $\vartheta_n (2 L_{n+1} - 1)^d = o(\varepsilon_{n+1})$. Thus, we condition on the event that every sub-boxes satisfy $\tilde{I}(R_n,\eta)$. Then, the events when the small boxes are bad become independent for disjoint boxes and have a probability uniformly smaller than $\varepsilon'_n = \varepsilon_n + \vartheta_n \sim \varepsilon_n$. Thus, we can again use a comparison with a binomial random variable with probability of success $1 - \varepsilon'_n$ and the rest of the proof is the same as in the lattice case.
\end{proof}

\begin{figure}[!t]
\begin{center}
\includegraphics[width=11cm]{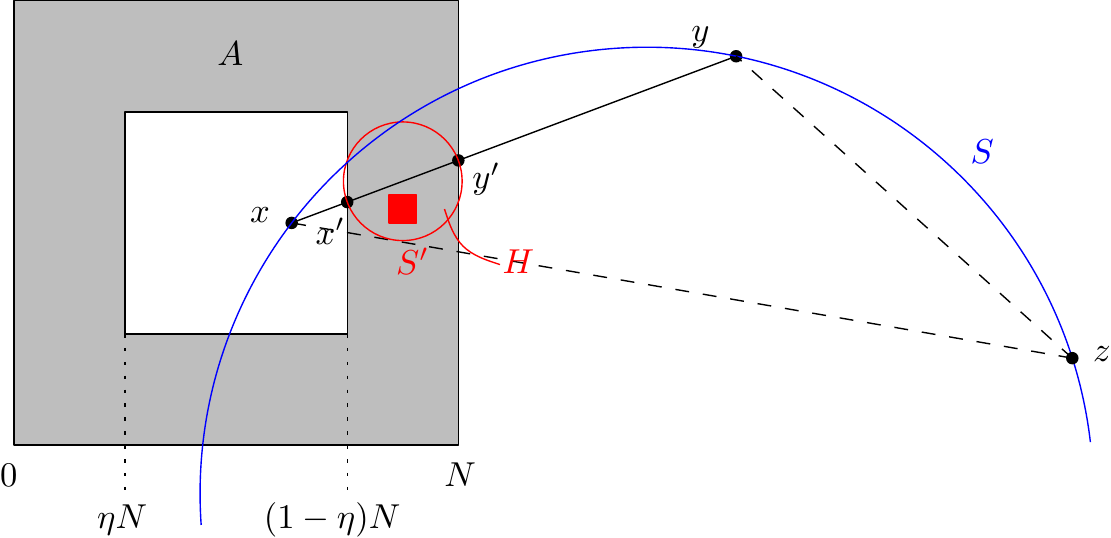}
\caption{\label{fig:geomlemma} The blue circle circumscribes  a triangle face $(x,y,z)$ of $\kD(\kP))$ hence it contains no point of $\kP$ in it interior. The half disk $H$ contains a red box of macroscopic size included in the annulus $A$.}
\end{center}
\end{figure}

\begin{proof}[Proof of Lemma \ref{lemm:delaunay}]
Take $ x \in \kP \cap [\eta N, (1- \eta) N]^d$ and  $y \in \kP \cap (\R^d\setminus [0,N]^d)$ and suppose that there is an edge between $x$ and $y$ in $\mathcal{D}(\mathcal{P})$. There exists an (hyper)sphere $S$ circumscribing a simplex of the triangulation and having the segment $[x,y]$ has a chord. By definition of the Delaunay triangulation, this sphere has no points of $\kP$ in its interior. Let $[x',y'] \defeq [x,y] \cap A_\eta$. The sphere $S'$ having the segment $[x',y']$ as a diameter has an hemisphere $H$ included in $S$. It is now easy to convince oneself that there is always a square box $B$ of side length $\delta N$ included in the intersection $A \cap H$ of the annulus $A$ and the hemisphere $H$ (\emph{c.f.} figure \ref{fig:geomlemma}). By construction this box does not contain any point of $\kP$. Moreover, $\delta$ depends on $\eta$ and the dimension $d$ but not on $N$.
Thus, we just need to construct an event $\widetilde{I}$ for the point process $\kP$ such that any possible square box of side length $\delta N$ inside $A_{\eta}$ contains at least one atom of $\kP$. This is achieved by partitioning the annulus into boxes of side length $\eta N / k$ where
$k \defeq \lceil 2\eta /\delta\rceil$ and then requesting that the point process has at least one atom in the interior of every box. Again, the number $K$ of boxes needed depend only on $\eta$ and $d$. The probability that the event $\widetilde{I}$ constructed in this way fails is the probability that there exists an empty box. By union bound, this is smaller than $K \exp(-(\eta N/k)^d)$ and the lemma follows.
\end{proof}

\begin{rema}\
\begin{itemize}
\item The multiscale technique used to prove the existence of stable sets and therefore a sub-critical phase is quite robust. Here, we only used it for three particular weighted graphs but it is easy to convince oneself that it can be applied for many other graphs that can be embedded into a finite dimensional space in a ``nice way'' (for graphs with exponential growth, using a multiscale method seems much more challenging). For example, the previous arguments work for general point processes provided that the intensity is bounded away from $0$ and $\infty$. We can also consider random radii when constructing the random geometric graph as long as the distribution has very light tails.

\item Another family of graphs that we think would be interesting to study  are the infinite uniform planar maps (such as the uniform infinite planar triangulation). These are graphs for which the degree of a typical site has exponential tails so we expect again that $\Delta_c > 0$ for degree weighted CMP. However, this will require more work and it is not clear (to us) what embedding into $\R^2$ should be chosen in order to use a multiscale argument.
\end{itemize}
\end{rema}


\section{Connection with the contact process}
\label{sec:connectioncontact}

In this section, we make rigorous the heuristic given in the introduction by relating the almost sure extinction of the contact process on an infinite graph to the existence of a sub-critical phase for degree-weighted CMP on the same graph. 

\subsection{The contact process on a locally finite graph}

Recall that $G = (V,E)$ is a locally finite and connected graph. Fix a parameter $\lambda>0$ which we call "infection rate". The contact process $\xi = (\xi(t),\, t\geq 0)$ on $G$ is a continuous time Markov process taking values in $\{0,1\}^V$ with transition rates given, for each $x\in V$ and $A\subset V$, by
\begin{subequations}
\begin{alignat}{2}
& A \to  A - \{x\} &\quad& \text{at rate $1$}, \label{eq:recover}\\
& A \to A \cup \{x\} && \text{at rate $\lambda|\{y\in A,\, d(x,y)=1 \}|$}. \label{eq:infection}
\end{alignat}
\end{subequations}
Thus $\xi(t)$ represents the subset of infected sites at time $t$ and \eqref{eq:recover} means that each infected site recovers at rate $1$ whereas \eqref{eq:infection} states that each site, while infected, emits independent infection vectors along its adjacent edges at rate $\lambda$.

When the graph $G$ has bounded degree (in particular when it is finite), classical theorems concerning interacting particle systems show that these transition rates define a unique Feller process (see for instance \cite{Li1} or \cite{Li2} for details). However, in our setting, the graph $G$ usually has unbounded degrees and we need to be a bit more careful when defining the contact process. In order to do so, we use the classical ``graphical construction'' which we briefly recall, see for example \cite{Li1}, p32 for additional details about this representation.

For each $x\in V$, let $N_x$ denote a Poisson point process with intensity $1$ on $\R_+$. For each oriented edge $(x,y)$, let  $N_{x,y}$ denote a Poisson point process on $\R_+$ with intensity $\lambda$. We assume that all these Poisson processes are independent. Consider $H = V\times \R_+$. For each $x\in V$, put "recovery" marks on the time-lines $\{x\} \times \R_+$ at the position of the atoms of $N_x$. For each oriented edge $(x,y)$, put arrows from $(x,t)$ to $(y,t)$ at the times $t$ corresponding to atoms of $N_{x,y}$. Following Liggett \cite{Li1}, we call active path a connected oriented path in $H$ which moves along the time lines in the increasing $t$ direction, jumps from a site to a neighbouring one using the oriented arrows but never crosses any recovery mark.  Then, we define the contact process $(\xi(t),t\geq 0)$ on $G$ starting from an initial infected configuration $\xi(0) = A$ in the following way:
\begin{equation}\label{defCP}
\xi(t) \defeq \{x\in V,\, \text{there exists a finite active path from $(y,0)$ to $(x,t)$ for some $y\in A$} \}
\end{equation}
(see  Figure \ref{fig:contactgeometric} for an illustration of this construction). This construction defines the contact process for all time $t\geq 0$ in terms of a particular oriented percolation process on $G\times\R_+$. Let us point out that without any additional assumption on $G$, it is possible that the process starting from a finite number of infected sites blows-up (\emph{i.e.} creates infinitely many particles) in finite time (this corresponds to having an infinite percolation cluster in a slice $G\times [0,t]$). However, we will not be concerned with this case as it will be ruled out by the additional assumptions that we shall make on the graph $G$.

\begin{figure}[!t]
\begin{center}
\includegraphics[width=11cm]{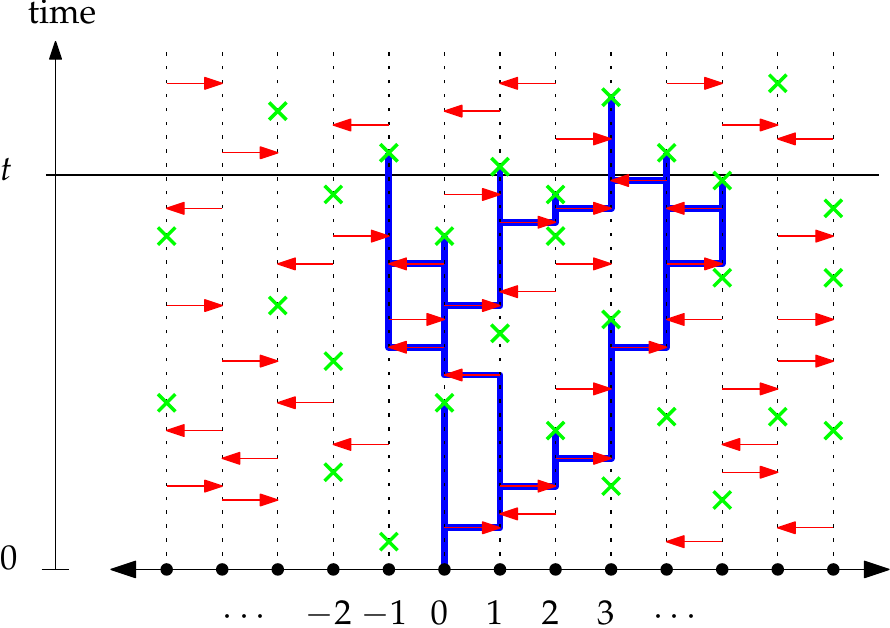}
\caption{\label{fig:contactgeometric}Graphical construction of the contact process on $\Z$. Green crosses are recovery times, red arrows are infection times. The blue graph represents the history of the contact process started with site $0$ infected: we can see that at the time $t$ only $-1$, $1$, $3$ and $4$ are infected and that it dies out in finite time.}
\end{center}
\end{figure}

We use the notation $\xi^A$ to emphasize the starting configuration $A$ of the process. We will also need to consider the contact process defined on a subset $W \subset V$ of the vertices which we will denote by $\xi_{|W}$. This process is constructed using the same graphical representation and by keeping only the infection arrows linking vertices inside $W$. Finally, we define the number of infections exiting $W$ up to time $T$ as the number of oriented arrows in the graphical representation of the form $(x,t)\to (y,t)$  with $x\in W$, $y\notin W$ and $t<T$ that are reached by an active path starting from a vertex in $\xi(0)$ and which stays inside $W$.

A nice property of the graphical construction is that it provides a natural coupling between processes defined in distinct sub-graphs and/or with distinct initial infected sets. More precisely, for any $W\subset W'$ and any $A\subset A'$, we have
\begin{equation}\label{couplingCP}
\xi^A_{|W}(t) \subset \xi^{A'}_{|W'}(t)\quad\hbox{for all $t\geq0$.}
\end{equation}
If $(W_n)$ is an increasing sequence of finite sub-graphs of $V$ such that $\lim\uparrow W_n = V$, it follows from this coupling  and \eqref{defCP} that
\begin{equation*}
\xi(t) = \lim_{n\to\infty}\uparrow \xi_{|W_n}(t) \quad\hbox{for all $t\geq 0$.}
\end{equation*}
This means that the process $\xi$ defined on the infinite graph corresponds to the weak limit of the contact process defined on any increasing sequence of finite sub-graphs $W_n$. In fact, the main theorem of this section is stated in terms of $\xi$ on the infinite graph but we could restate it in terms of the contact processes $\xi_{\vert W_n}$ restricted to finite subsets which are, ultimately, the only processes we will consider during the proof.

The graphical construction of the contact process gives a direct proof of the important self-duality property of the model: since the Poisson processes $(N_x)$ and $(N_{x,y})$ are invariant by time reversal, it follows that, for any fixed time $t\geq 0$ and any two sets $A,B$, we have
\begin{equation}\label{selfduality}
\P\{ \xi^A(t)\cap B \neq \emptyset\}  = \P\{ \xi^B(t)\cap A \neq\emptyset\}.
\end{equation}

\bigskip

We can now state the main theorem of this section which provides a sufficient condition on the geometry of a graph $G$ to ensure the existence of a sub-critical phase for the contact process.

\begin{theo}\label{TheoCMPtoCP} Let $G = (V,E)$ be a locally finite connected graph. Consider the degree-weighted CMP on $G$ \emph{i.e.} with weights given by
\begin{equation}\label{hypotheo}
\quad r(x) \defeq \hbox{deg}(x) \ind_{\{\hbox{deg}(x) \geq \Delta\}}.
\end{equation}
Suppose that for some expansion exponent $\alpha\geq\frac{5}{2}$ and some $\Delta\geq 0$, the partition $\cmp(r,\alpha)$ has no infinite cluster. Then, the contact process on $G$ has a sub-critical phase: there exists $\lambda_0 > 0$ such that, for any infection parameter $\lambda<\lambda_0$, the process starting from a finite configuration of infected sites dies out almost surely.
\end{theo}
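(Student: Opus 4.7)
The overall strategy is to exploit the hypothesis via Theorem \ref{theo:equivInfiniteCluster}: the absence of an infinite cluster in $\cmp$ guarantees that for every $x \in V$ the stabiliser $\kS_x$ is finite, hence $V$ can be exhausted by an increasing sequence of finite stable sets. By the coupling property \eqref{couplingCP}, it suffices to prove that for $\lambda$ small enough and any finite initial configuration $A$, the process $\xi^A$ dies out almost surely. Since $\kS_A=\bigcup_{x\in A}\kS_x$ is finite and stable (Proposition~\ref{prop:stablemisc} and Proposition~\ref{prop:unionstable}), I would focus on showing that with probability one the infection remains confined to $\kS_A$ up to extinction --- or more precisely, that the expected number of infection arrows $N^{\mathrm{out}}_A$ ever emitted from $\kS_A$ is finite and, crucially, that $\sup_A \E[N^{\mathrm{out}}_{\kS_A}]/|A|$ can be made arbitrarily small by decreasing $\lambda$. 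Combined with a renewal/branching comparison, this yields a.s. global extinction.

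The main technical step is a quantitative survival and emission estimate for the contact process restricted to a single cluster $C\in\cmp$ embedded in $\kS_C$. For $\lambda$ sufficiently small I would like to prove two bounds of the following shape: first, the expected extinction time of $\xi_{|\kS_C}$ starting from any subset of $\kS_C$ is at most $\exp(Kr(C))$ for some $K=K(\lambda,\Delta)$; second, the expected number of infection arrows crossing the outer boundary of $B(C,r(C)^\alpha)$ before extinction is at most $\exp\bigl(Kr(C)-c\,r(C)^\alpha\bigr)$, which is doubly exponentially small in $r(C)$ because $\alpha>1$. The mechanism making this possible is that outside $C$ the vertices all have degree smaller than $\Delta$, so the contact process restricted to $B(C,r(C)^\alpha)\setminus C$ is dominated by a genuinely subcritical branching random walk (for $\lambda<1/\Delta$), and hence an infection emitted at the edge of $C$ propagates a distance $\ell$ with probability at most $e^{-c\ell}$. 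The stability condition $B(C,r(C)^\alpha)\subset\kS_C$ then turns this decay into the emission bound above.

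From this single-cluster estimate I would conclude by induction along the well-founded partial order $\vartriangleright$ on $\cmp$ defined in Section~\ref{sec:orderoriented} (well-foundedness is ensured by Proposition~\ref{prop:orientedmisc}~\ref{orientedmisc4}). Starting from clusters of zero out-degree --- singletons with $r(x)<1$, for which the direct subcritical comparison applies --- and using Corollary~\ref{coro:stabdescend}, which describes $\kS_C$ as $C$ together with all its descendants, one propagates the emission bound upward: when estimating emissions out of $\kS_C$, every infection reaching $\kS_{C'}$ for a descendant $C'\vartriangleright C'$ is, by the inductive hypothesis and the strong Markov property (together with self-duality \eqref{selfduality} to handle re-entries), effectively trapped there and contributes a negligible probability of further escape. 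Summing the geometric series of escape probabilities along the hierarchy, and using that the diameter of a cluster is controlled by Proposition~\ref{prop_diam_weight}, gives the desired global estimate. The exponent $\alpha\geq 5/2$ enters in balancing these bounds: the inductive step must absorb a polynomial volume factor $|B(C,r(C)^\alpha)|$ (at most of order $\exp(c\,r(C)^\alpha\log r(C))$ in the worst case, since $G$ can have unbounded degrees away from $C$) against the exponential gain $e^{-c\,r(C)^\alpha}$ --- a comfortable margin like $\alpha\geq 5/2$ ensures the recursion closes uniformly.

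\textbf{Expected main obstacle.} The hardest part is the survival-time and emission estimate on a single cluster $C$. A cluster is an arbitrary aggregate of high-degree vertices whose internal dynamics does not reduce to a single star, so one cannot directly invoke standard star-graph extinction-time formulas. I expect this will require an auxiliary coupling, for instance bounding $\xi_{|C}$ above by a process on a bouquet of independent stars indexed by the vertices of $C$ of degree $\geq\Delta$, and using that the edges \emph{inside} $C$ are ``short'' (controlled by $\diam(C)$ from Proposition~\ref{prop_diam_weight}) so the reinforcement they generate is quantifiable. Ensuring that the constants $K$ and $c$ above are uniform across clusters of all sizes --- not degrading when $r(C)$ is large --- is the delicate point and will likely dictate the precise value $\alpha\geq 5/2$.
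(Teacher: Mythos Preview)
Your plan has the right architecture (induction over the cluster hierarchy, comparison with a branching process), but there are two concrete gaps that would prevent it from closing.

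\textbf{First gap: the region outside $C$ is not subcritical.} You write that ``outside $C$ the vertices all have degree smaller than $\Delta$, so the contact process restricted to $B(C,r(C)^\alpha)\setminus C$ is dominated by a genuinely subcritical branching random walk.'' This is false. The ball $B(C,r(C)^\alpha)$, and more generally $\kS_C\setminus C$, typically contains many other clusters $C'$ with $r(C')\geq 1$, i.e.\ vertices of degree $\geq\Delta$; they simply have $r(C')^\alpha<d(C,C')$ so they did not merge with $C$. You partially acknowledge this later when you invoke the inductive hypothesis on descendants, but then the exponential-decay mechanism you describe (an infection travels distance $\ell$ with probability $\leq e^{-c\ell}$ through bounded-degree territory) is not available: between $C$ and $\partial\kS_C$ the infection can hop from one high-degree cluster to the next. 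What you actually need is a bound on the \emph{cost of any such chain of hops}, and this is not a branching-random-walk estimate.

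\textbf{Second gap: full stabilisers are too large.} The paper does not work with $\kS_C$ but with the smaller $\eta$-stabilisers $\kS^\eta_C$ (for $\eta=0.1$), defined via the restricted relation $C\overset{\eta}{\to}C'\Leftrightarrow d(C,C')\leq\eta r(C)^\alpha$. The reason is size control: for $\eta<1$ one has $d(x,y)\asymp r(\cmp_x)^\alpha$ whenever $y$ is on the outer boundary of $\kS^\eta_x$ (Proposition~\ref{propsizestab}), whereas for $\eta=1$ the upper bound degrades to order $r(\cmp_x)^{\alpha+1}$. This tight two-sided control is what makes the key deterministic inequality (Proposition~\ref{prop_ineqconv}) work: for any chain $x_0\overset{\kS^\eta}{\to}\cdots\overset{\kS^\eta}{\to}x_n$ crossing $\kS^\eta_C\setminus C$ from the inner to the outer boundary, one has $\sum_i\tilde r(\cmp_{x_i})^\beta\geq\beta\,\tilde r(C)^\beta$ for $\beta$ slightly above $1$. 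This is the replacement for your ``distance $r(C)^\alpha$ through subcritical territory'' mechanism, and it is precisely where $\alpha\geq 5/2$ is used. The paper then dominates the contact process by a time-delayed branching Markov chain on the oriented graph $G^\eta$ whose edges jump from $x$ to $\partial\kS^\eta_x$; the branching estimate (Proposition~\ref{propTDBMC}, item~3) takes a \emph{supremum over paths} rather than a sum, so no volume factor of the type $|B(C,r(C)^\alpha)|$ ever appears --- your concern about absorbing such a factor is an artefact of the wrong decomposition.

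In short: replace $\kS_C$ by $\kS^\eta_C$, replace the branching-random-walk comparison by the path-cost inequality of Proposition~\ref{prop_ineqconv}, and run the induction on the value of $r(C)$ (from $R$ to $R/\eta$) rather than along $\vartriangleright$; this is how the paper obtains constants uniform in $C$.
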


\begin{rema} Let us make a few comments on Theorem \ref{TheoCMPtoCP}:
\begin{itemize}
\item First, we find it remarkable that, in a way, all the geometry of the graph needed to prove the existence of a sub-critical phase is encoded in the merging procedure: the radii $r(x)$ give the degrees sites but provide no information on the local shape or growth of the graph around a site. In particular, the theorem requires no assumption on the growth rate of $G$.

\item The exponent $5/2$ is not optimal. However, the proof we describe cannot yield an exponent smaller than $2$ so we did not find it worth the effort to clutter the proof with additional technical details for very little gain. In order to get an exponent close to $1$, we believe that one needs a better understanding of the inner structure of clusters. The real challenge is to prove (or disprove) the theorem for $\alpha = 1$.
\end{itemize}
\end{rema}

Theorem \ref{theo:CPonRGGandDelaunay} stated in the introduction of the paper is now a consequence of the Theorem above and Proposition \ref{TheoCMPtoCP}:
\begin{coro} The contact process on a random geometric graph or on a Delaunay triangulation admits a sub-critical phase.
\end{coro}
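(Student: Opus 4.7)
The corollary follows by combining the two main results at hand, with essentially no new ingredient. First, fix the expansion exponent $\alpha = 5/2$, which is the threshold required by Theorem~\ref{TheoCMPtoCP}. Proposition~\ref{prop:CMPDelaunay} applied with this $\alpha$ gives $\Delta_c(5/2) < \infty$ for the degree-weighted CMP, both on the supercritical random geometric graph $\mathcal{G}(R,\kP)$ and on the Delaunay triangulation $\mathcal{D}(\kP)$. Picking any $\Delta > \Delta_c(5/2)$, and using that the event of having an infinite cluster is a $0$-$1$ event under the translation-invariance of the Poisson point process (as noted after Proposition~\ref{prop:0-1law}), the degree-weighted CMP with weights $r(x) = \deg(x)\ind_{\{\deg(x) \geq \Delta\}}$ and exponent $\alpha = 5/2$ contains no infinite cluster almost surely.

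Next I would apply Theorem~\ref{TheoCMPtoCP} pathwise to such a realization of $G$. The hypotheses are easily checked: the Delaunay triangulation of a Poisson point process is a.s.\ locally finite and connected; the unique infinite component of $\mathcal{G}(R,\kP)$ is locally finite and, by construction in the supercritical regime, connected. The weights $r$ are precisely those appearing in \eqref{hypotheo}, and the underlying CMP has no infinite cluster by the previous step. Theorem~\ref{TheoCMPtoCP} then yields $\lambda_0 > 0$ such that, for every infection rate $\lambda < \lambda_0$, the contact process started from any finite configuration dies out almost surely, whence $\lambda_c \geq \lambda_0 > 0$, i.e.\ the existence of a sub-critical phase.

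The only subtlety worth highlighting, and in that sense the sole thing to watch out for, is that $\lambda_0$ as produced by Theorem~\ref{TheoCMPtoCP} is in principle a measurable function of the random graph $G$. To turn this into the clean statement ``$\lambda_c > 0$ almost surely'' one can either track the dependence of $\lambda_0$ on the geometry inside the proof of Theorem~\ref{TheoCMPtoCP} to extract a deterministic lower bound, or invoke ergodicity of the Poisson point process under $\R^d$-translations: the event $\{\lambda_c > 0\}$ is then invariant, and having positive probability it must have probability one. Either route is routine, and no genuinely new argument is required beyond specializing the two preceding results.
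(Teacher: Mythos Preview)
Your proposal is correct and follows exactly the paper's approach: the corollary is stated as an immediate consequence of Theorem~\ref{TheoCMPtoCP} combined with Proposition~\ref{prop:CMPDelaunay}, with no further argument given. Regarding the subtlety you flag about $\lambda_0$ possibly depending on the realization of $G$: the paper in fact resolves this via Proposition~\ref{prop-mainestimates}, which shows that $\lambda_0$ depends only on $\Delta$ (and $\alpha$), hence is deterministic once $\Delta>\Delta_c(5/2)$ is fixed; your ergodicity alternative is also perfectly valid.
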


Let us give a rough description of the proof of Theorem \ref{TheoCMPtoCP}. The basic idea behind the theorem is that when the infection parameter $\lambda$ is very small, the sets where the contact process is locally super-critical are the big clusters of the CMP. Yet, when the CMP has no infinite cluster, these big clusters look like islands surrounded by an ocean of small degree sites; and, on this ocean, the contact process is sub-critical and dies out quickly. We will prove that we can find a neighbourhood $S$ around each cluster $C$ that will compensate the super-critical activity inside the cluster. More precisely, we will show that when an infection reaches the big cluster $C$, even though it will likely generate many infections before the whole cluster recovers, only very few infections will exit the neighbourhood $S$ (less than one in average). Then, we can couple our process with a sub-critical branching Markov chain to conclude that the process dies out. The difficulty here is that we need these estimates to hold for every single cluster. Otherwise the coupling is useless since the branching process can survive locally on the finite graphs where the estimates fail.

A natural candidate for the neighbourhood $S$ is the stabiliser of the cluster. It turns out that we need to consider a slight modification of these sets in order to have more control on their size compared to the size of the cluster $C$. This is the purpose of Section \ref{sec:etastabilisers} where we define the notion of $\eta$-stabilisers. We also prove in this section the key  Proposition  \ref{prop_ineqconv} which tells us that, indeed, these $\eta$-stabilisers are large enough to dissipate most of the infections generated by their cluster. This is where we require $\alpha \geq 2.5$ in order to have enough room to bootstrap the result from smaller $\eta$-stabilisers to larger ones. Again, the proof makes heavy use of the multi-scale structure of the CMP since stabilisers are themselves composed of smaller stabilisers. 

In Section \ref{sec:TDBRW} we introduce the particular branching process that we will couple with the contact process and present estimates for the extinction time and number of particles created that will be needed for the last steps of the proof. 

Finally, in Section \ref{sec:prooftheoCP}, we put everything together, prove the main estimates and complete the proof of the theorem.

\subsection{\texorpdfstring{$\eta$}{eta}-stabilisers and the graph \texorpdfstring{$G^\eta$}{G eta}}\label{sec:etastabilisers}

Recall that, according to Corollary  \ref{coro:stabdescend}, the stabiliser $\kS_U$ of a subset $U$ is equal to the union of all the clusters intersecting $U$ together with all their descendants in the oriented graph on the set of clusters $\cmp$. Fix $0 < \eta \leq 1$ and consider another adjacency relation on the set of clusters $\cmp$ given by
$$
C \overset{\eta}{\to} C' \quad \Longleftrightarrow \quad C\neq C' \hbox{ and } d(C,C') \leq \eta (r(C))^{\alpha}.
$$
For $\eta = 1$, this corresponds to the previous definition and for $\eta <1$, it is a more restrictive condition so the oriented graph $(\cmp,\cdot \overset{\eta}{\to} \cdot )$ is a sub-graph of $(\cmp,\cdot \to \cdot )$ defined in Section \ref{sec:orderoriented}. In particular, it does not contain any cycle or any infinite oriented path. Mimicking the definition of stabilisers, we introduce:
\begin{defi}
For any subset $W \subset V$, we call $\eta$-stabiliser of $W$ the union of all the clusters of $\cmp$ that intersect $W$ together with all their descendants in the oriented graph $(\cmp,\cdot \overset{\eta}{\to} \cdot )$. We denote this set by $\kS^{\eta}_W$, and write $\kS^{\eta}_{x}$ for $\kS^{\eta}_{\{x\}} = \kS^{\eta}_{\cmp_x}$

\end{defi}

\begin{rema}
Contrarily to $1$-stabilisers, $\eta$-stabilisers are not necessarily connected sets. See Figure \ref{fig:etastab} for an example.
However, by construction, we still have the property that any two $\eta$-stabilisers are either disjoint or one of them is included in the other one.
\end{rema}

\begin{figure}[!t]
\begin{center}
\includegraphics[width=13cm]{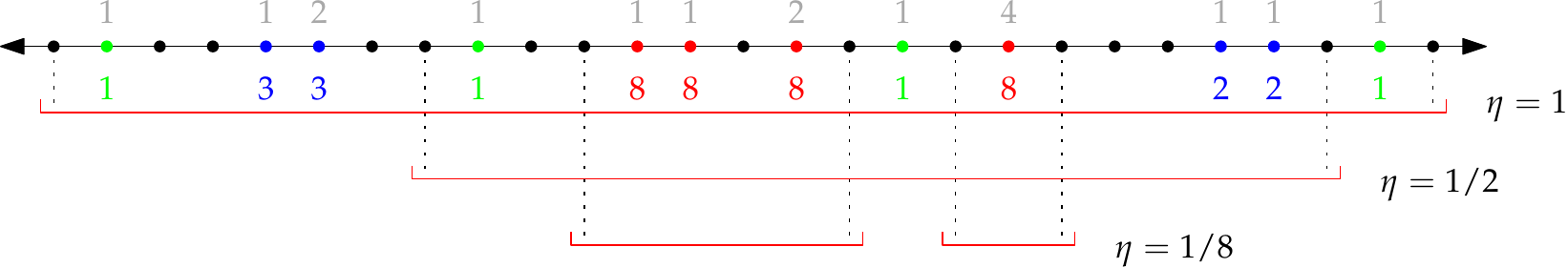}
\caption{\label{fig:etastab}Examples of $\eta$-stabilisers. The underlying graph is the same as in Figure \ref{fig:stabilisators}. Initial weights are in grey and weights of clusters are colored. Some $\eta$-stabilisers of the red cluster of total weight $8$ are displayed in red brackets. Notice that the $1/8$-stabiliser is not a connected graph.}
\end{center}
\end{figure}

We use $\eta$-stabilisers to define yet another new oriented graph, $G^\eta = (V,\cdot\overset{\kS^\eta}{\to}\cdot)$, with the same vertex set $V$ as the original graph $G$ but with adjacency relation $\cdot\overset{\kS^\eta}{\to}\cdot$ given by
\begin{equation}
x \overset{\kS^\eta}{\to} y \quad \Longleftrightarrow \quad d(\kS^\eta_x,y) = 1
\end{equation}
\emph{i.e.} the descendants of a site $x$ in this new oriented graph are exactly the sites on the outer boundary of its $\eta$-stabiliser. Notice that if $\cmp$ has no infinite cluster, then every stabiliser is finite so the out-degrees in $G^\eta$ are finite.

\begin{rema} The relation $\overset{\kS^\eta}{\to}$ is defined on the vertex set $V$ whereas $\to$ and $\overset{\eta}{\to}$ are defined on the set of clusters $\cmp$.
\end{rema}
One of the main advantages of dealing with $\eta$-stabilisers instead of $1$-stabilisers is that we have a precise control of their size which, in turns, provides sharp estimates for the distance between two adjacent sites in  $G^\eta$.

\begin{prop}\label{propsizestab} Let $x,y\in V$ such that $x\overset{\kS^\eta}{\to}{y}$, we have
$$
\eta r(\cmp_x)^\alpha \leq d(x,y) \leq 1 + \gamma r(\cmp_x)^\alpha
$$
where
\begin{equation}\label{defgamma}
\gamma  \defeq \frac{1}{2^\alpha - 2} + \frac{\eta}{1-\eta}\left(1 + \frac{1}{2^\alpha - 2}\right).
\end{equation}
\end{prop}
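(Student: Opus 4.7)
The plan is to prove the two bounds separately, using that $\kS^\eta_x$ is by construction a union of entire clusters of $\cmp$ and that, by definition of the $\eta$-oriented relation, successive clusters in any $\eta$-chain have geometrically decreasing weights.

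For the lower bound I would argue as follows. Since $y \notin \kS^\eta_x$ and $\kS^\eta_x$ is a union of whole clusters, the cluster $\cmp_y$ is entirely disjoint from $\kS^\eta_x$. In particular $\cmp_y \neq \cmp_x$, and $\cmp_y$ is not $\eta$-pointed to by $\cmp_x$ (for otherwise it would be a descendant and hence sit inside $\kS^\eta_x$). By the very definition of $\overset{\eta}{\to}$ this means $d(\cmp_x, \cmp_y) > \eta \, r(\cmp_x)^\alpha$, and the bound follows from $d(x, y) \geq d(\cmp_x, \cmp_y)$.

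For the upper bound, by the hypothesis $x \overset{\kS^\eta}{\to} y$ one may pick a vertex $z \in \kS^\eta_x$ adjacent to $y$. Unrolling the descendancy relation, there exists a chain $\cmp_x = C_0 \overset{\eta}{\to} C_1 \overset{\eta}{\to} \cdots \overset{\eta}{\to} C_k$ with $z \in C_k$, and one can bound $d(x,z)$ by concatenating intra-cluster segments with inter-cluster jumps:
\[
d(x, z) \;\leq\; \sum_{i=0}^{k}\diam(C_i) \;+\; \sum_{i=0}^{k-1} d(C_i, C_{i+1}).
\]
Each diameter is controlled by Proposition \ref{prop_diam_weight}, giving $\diam(C_i) \leq r(C_i)^\alpha/(2^\alpha - 2)$, and each distance satisfies $d(C_i, C_{i+1}) \leq \eta \, r(C_i)^\alpha$ by definition of $\overset{\eta}{\to}$.

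The crucial structural input is the geometric decay of $r(C_i)$ along the chain. The admissibility of $\cmp$ yields $d(C_i, C_{i+1}) > \min\{r(C_i), r(C_{i+1})\}^\alpha$, and combined with $d(C_i, C_{i+1}) \leq \eta \, r(C_i)^\alpha < r(C_i)^\alpha$, this forces the minimum to be attained at $r(C_{i+1})$; therefore $r(C_{i+1})^\alpha < \eta \, r(C_i)^\alpha$, and by induction $r(C_i)^\alpha \leq \eta^i \, r(\cmp_x)^\alpha$. Both sums above are then geometric with ratio $\eta < 1$, and summing them reproduces exactly the constant $\gamma$ of \eqref{defgamma}; adding the final edge $d(z, y) = 1$ yields $d(x,y) \leq 1 + \gamma\, r(\cmp_x)^\alpha$. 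I do not anticipate a real obstacle: the proof is essentially a careful accounting along the $\eta$-chain, the only non-routine observation being the strict geometric decay of cluster weights, which is what guarantees the bound is uniform in the (possibly large) length $k$ of the chain.
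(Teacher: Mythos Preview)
Your proof is correct and follows the same approach as the paper's: both arguments derive the lower bound from the fact that $\cmp_x \overset{\eta}{\to} \cmp_y$ fails, and obtain the upper bound by walking along the $\eta$-chain $C_0 \overset{\eta}{\to}\cdots\overset{\eta}{\to} C_k$, using the admissibility of $\cmp$ to force $r(C_{i+1})^\alpha < \eta\, r(C_i)^\alpha$, bounding diameters via Proposition~\ref{prop_diam_weight}, and summing the resulting geometric series to recover exactly $\gamma$. The only cosmetic difference is that the paper groups the sum as $\diam(C_0) + \sum_{i\geq 1}\bigl(d(C_{i-1},C_i)+\diam(C_i)\bigr)$ rather than splitting diameters and distances into two separate sums, but the computation is identical.
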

\begin{rema}
For $\eta = 1$, the proposition fails and the upper bound is of order  $r(\cmp_x)^{\alpha + 1}$.
\end{rema}
\begin{proof} By definition, any site $z\in V$ with $d(\cmp_x,z) \leq \eta r(\cmp_x)^\alpha$ belongs to the $\eta$-stabiliser $\kS^\eta_x$. This proves the lower bound. For the upper bound, fix $z\in \kS^\eta_{x}$. There exists a chain of clusters $\cmp_x = C_0 \overset{\eta}{\to} C_1 \overset{\eta}{\to} C_2 \overset{\eta}{\to} \ldots \overset{\eta}{\to} C_n  = \cmp_z$ for some $n\geq 0$. By definition, we have $d(C_i,C_{i+1}) \leq \eta r(C_i)^\alpha$. This implies that $r(C_{i+1})^\alpha < \eta r(C_{i})^\alpha$ since otherwise $C_i$ and $C_{i+1}$ would have merged together during the CMP. Thus, for all $0\leq i \leq n$,
$$
r(C_{i})^\alpha  \leq \eta^i r(\cmp_x)^\alpha.
$$
In view of Proposition \ref{prop_diam_weight}, we find that
$$
\diam(C_{i}) \leq \frac{\eta^i}{(2^\alpha - 2)}r(\cmp_x)^\alpha
$$
and the triangle equality yields
\begin{align*}
d(x,z) &\leq \diam(\cmp_{x}) + \sum_{i=1}^{n} \left(d(C_{i-1},C_{i}) + \diam(C_{i})\right)\\
&\leq  \frac{1}{2^\alpha - 2} r(\cmp_x)^\alpha +  \sum_{i=1}^{n} \left(\eta^i r(\cmp_x)^\alpha + \frac{\eta^i}{2^\alpha - 2}r(\cmp_x)^\alpha\right) \\
&=  \left(\frac{1}{2^\alpha - 2} + \frac{\eta}{1-\eta}\left(1 + \frac{1}{2^\alpha - 2}\right)\right) r(\cmp_x)^\alpha
\end{align*}
which yields the upper bound.
\end{proof}

\begin{prop}\label{prop_ineqconv}
Assume that $\alpha = 2.5$ and $\eta = 0.1$. Define
\begin{equation}\label{deftilder}
\tilde{r}(C) \defeq  r(C) + 2 \quad \hbox{for any cluster $C\in\cmp$.}
\end{equation}
Fix $C\in \cmp$ and consider a chain
$$
x_0 \overset{\kS^\eta}{\to} x_1 \overset{\kS^\eta}{\to} \ldots \overset{\kS^\eta}{\to} x_n \quad \hbox{where $x_i \in \kS^\eta_C \setminus C$ for all $i=0,1,\ldots, n-1$}
$$
(we do not impose any restriction on $x_n$ which may either be inside $\kS_C^\eta \setminus C$ or on its outer boundary). We have, for any $\beta \in [1, \alpha]$,
\begin{equation}\label{ineqconv_weak}
\sum_{i=0}^{n-1}  \tilde{r}(\cmp_{x_i})^{\beta} \geq d(x_0,x_n)^{\frac{\beta}{\alpha}}.
\end{equation}
Moreover assuming that $d(x_0,C) = 1$, $d(x_n,\kS^\eta_C)=1$ and $r(C) \geq 100$, we have the stronger inequality, valid for $1\leq \beta \leq1.01$,
\begin{equation}\label{ineqconv_strong}
\sum_{i=0}^{n-1} \tilde{r}(\cmp_{x_i})^{\beta} \geq \beta \tilde{r}(C)^{\beta}.
\end{equation}
\end{prop}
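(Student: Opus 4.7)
For the weak inequality \eqref{ineqconv_weak}, the approach is a direct combination of Proposition \ref{propsizestab} with subadditivity of concave functions. That proposition yields the stepwise estimate $d(x_i,x_{i+1}) \leq 1 + \gamma\, r(\cmp_{x_i})^\alpha$. Applying subadditivity of $t\mapsto t^{1/\alpha}$ (concave on $[0,\infty)$ with $f(0)=0$) together with the elementary numerical check $\gamma^{1/\alpha}\leq 1$, one obtains
\[
d(x_i,x_{i+1})^{1/\alpha} \leq (1+\gamma\,r(\cmp_{x_i})^\alpha)^{1/\alpha} \leq 1 + \gamma^{1/\alpha}\, r(\cmp_{x_i}) \leq r(\cmp_{x_i}) + 2 = \tilde r(\cmp_{x_i}).
\]
Raising to the power $\beta$ and then combining the triangle inequality $d(x_0,x_n)\leq\sum_i d(x_i,x_{i+1})$ with subadditivity of $t\mapsto t^{\beta/\alpha}$ (concave since $\beta\leq\alpha$) readily delivers \eqref{ineqconv_weak}.

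For the strong inequality \eqref{ineqconv_strong}, the weak bound together with the obvious lower bound on $d(x_0,x_n)$ only produces $\sum_i\tilde r(\cmp_{x_i})^\beta \gtrsim \eta^{\beta/\alpha} r(C)^\beta$, which misses $\beta\tilde r(C)^\beta$ by a fixed multiplicative constant; more must be extracted from the hypotheses. I would assemble three ingredients. First, since $x_i\in\kS^\eta_C\setminus C$ the cluster $\cmp_{x_i}$ is a strict $\eta$-descendant of $C$, so admissibility of $\cmp$ along the descent chain forces $r(\cmp_{x_i})^\alpha < \eta\, r(C)^\alpha$ and hence every step satisfies $d(x_i,x_{i+1}) \leq M := 1+\gamma\eta\, r(C)^\alpha$. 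Second, $\kS^\eta_C$ contains the ball $B(C,\lfloor\eta r(C)^\alpha\rfloor)$, so $x_n\notin\kS^\eta_C$ together with $d(x_0,C)=1$ yields $d(x_0,x_n) \geq \eta r(C)^\alpha - 1 =: D$. Third, combining $(r+2)^\alpha \geq r^\alpha + 2^\alpha$ (valid for $\alpha\geq 1$) with the numerical fact $2^\alpha\gamma\geq 1$ (one checks $2^{5/2}\cdot 0.415\approx 2.35$) upgrades the pointwise bound to
\[
\tilde r(\cmp_{x_i})^\alpha \geq \frac{d(x_i,x_{i+1})}{\gamma},\qquad\hbox{hence}\qquad \tilde r(\cmp_{x_i})^\beta \geq \gamma^{-\beta/\alpha}\, d(x_i,x_{i+1})^{\beta/\alpha}.
\]

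Summing this refined pointwise bound reduces matters to lower bounding $\sum_i d_i^{\beta/\alpha}$ under the constraints $\sum_i d_i \geq D$ and $d_i\leq M$. Since $t\mapsto t^{\beta/\alpha}$ is concave with $f(0)=0$ (hence subadditive), the minimum of this sum is attained by concentrating the mass; with $D/M \approx 1/\gamma\in(2,3)$, this forces three nonzero terms (two saturated at $M$ and one residual of length $D-2M$), giving
\[
\sum_{i=0}^{n-1}\tilde r(\cmp_{x_i})^\beta \geq \gamma^{-\beta/\alpha}\left(2 M^{\beta/\alpha} + (D-2M)^{\beta/\alpha}\right).
\]
The main obstacle is then purely numerical: verifying that this lower bound exceeds $\beta\,\tilde r(C)^\beta$ for all $\beta\in[1,1.01]$ and all $r(C)\geq 100$. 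Substituting the values of $M$, $D$ and $\gamma$, the left-hand side behaves asymptotically like $c\,r(C)^\beta$ with $c\approx 1.07$, while the right-hand side is at most $\beta(1+2/r(C))^\beta r(C)^\beta \leq 1.03\,r(C)^\beta$ on the prescribed range, so the inequality holds with a narrow but positive margin. It is precisely this tightness which dictates the thresholds $r(C)\geq 100$ and $\beta\leq 1.01$ in the hypotheses and, after tracking how $\gamma$ depends on $\alpha$, the restriction $\alpha\geq 5/2$: for $\alpha<5/2$ the constant $c$ drops below $1$ and the scheme fails to close.
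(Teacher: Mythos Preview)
Your argument is correct, and the core mechanism is the same as the paper's: both proofs hinge on the concavity optimization that the paper isolates as Lemma~\ref{lemm_conv} (minimizing $\sum x_i^\theta$ under $\sum x_i$ fixed and $x_i\le\varepsilon$). The organizational difference is in which variables this lemma is applied to.

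For \eqref{ineqconv_weak}, the paper routes through the lemma by writing $\sum_i\tilde r_i^\beta \ge g(\varepsilon,\beta/\alpha)\,D^{\beta/\alpha}$ with $D=\sum_i\tilde r_i^\alpha$, then invoking $g\ge 1$ and $D\ge d(x_0,x_n)/\gamma$. Your route is more elementary: a pointwise bound $d(x_i,x_{i+1})^{1/\alpha}\le\tilde r(\cmp_{x_i})$ followed by subadditivity of $t\mapsto t^{\beta/\alpha}$ on the distances. This avoids the lemma altogether and is a genuine simplification for the weak bound.

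For \eqref{ineqconv_strong}, the two proofs are near-isomorphic. The paper applies Lemma~\ref{lemm_conv} to the normalized weights $\tilde r_i^\alpha/D$, bounding the maximal share $\varepsilon$ via $\tilde r_i^\alpha\le(2+\eta^{1/\alpha}r(C))^\alpha$ and $D\ge(\eta r(C)^\alpha-1)/\gamma$, which yields $\varepsilon\lesssim\gamma$ and hence a factor $g(\varepsilon,\beta/\alpha)>1$ compensating the loss from $(\eta/\gamma)^{\beta/\alpha}<1$. You instead pass to the step distances $d_i$ via the pointwise inequality $\tilde r_i^\alpha\ge d_i/\gamma$ (your superadditivity observation $2^\alpha\gamma\ge1$ is exactly the paper's implicit bound $1+\gamma r^\alpha\le\gamma(r+2)^\alpha$), and then minimize $\sum d_i^{\beta/\alpha}$ under $\sum d_i\ge D$, $d_i\le M$. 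With $M/D\approx\gamma$, your minimum $2M^{\beta/\alpha}+(D-2M)^{\beta/\alpha}$ equals $D^{\beta/\alpha}g(M/D,\beta/\alpha)$, so the two computations produce the same numerical constant $c\approx1.07$ and the same margin over $\beta(1+2/r(C))^\beta\le1.03$. The paper's framing has the advantage of treating both inequalities in one pass; yours keeps the two parts modular and makes the weak bound cheaper.
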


\begin{figure}[!t]
\begin{center}
\includegraphics[width=11cm]{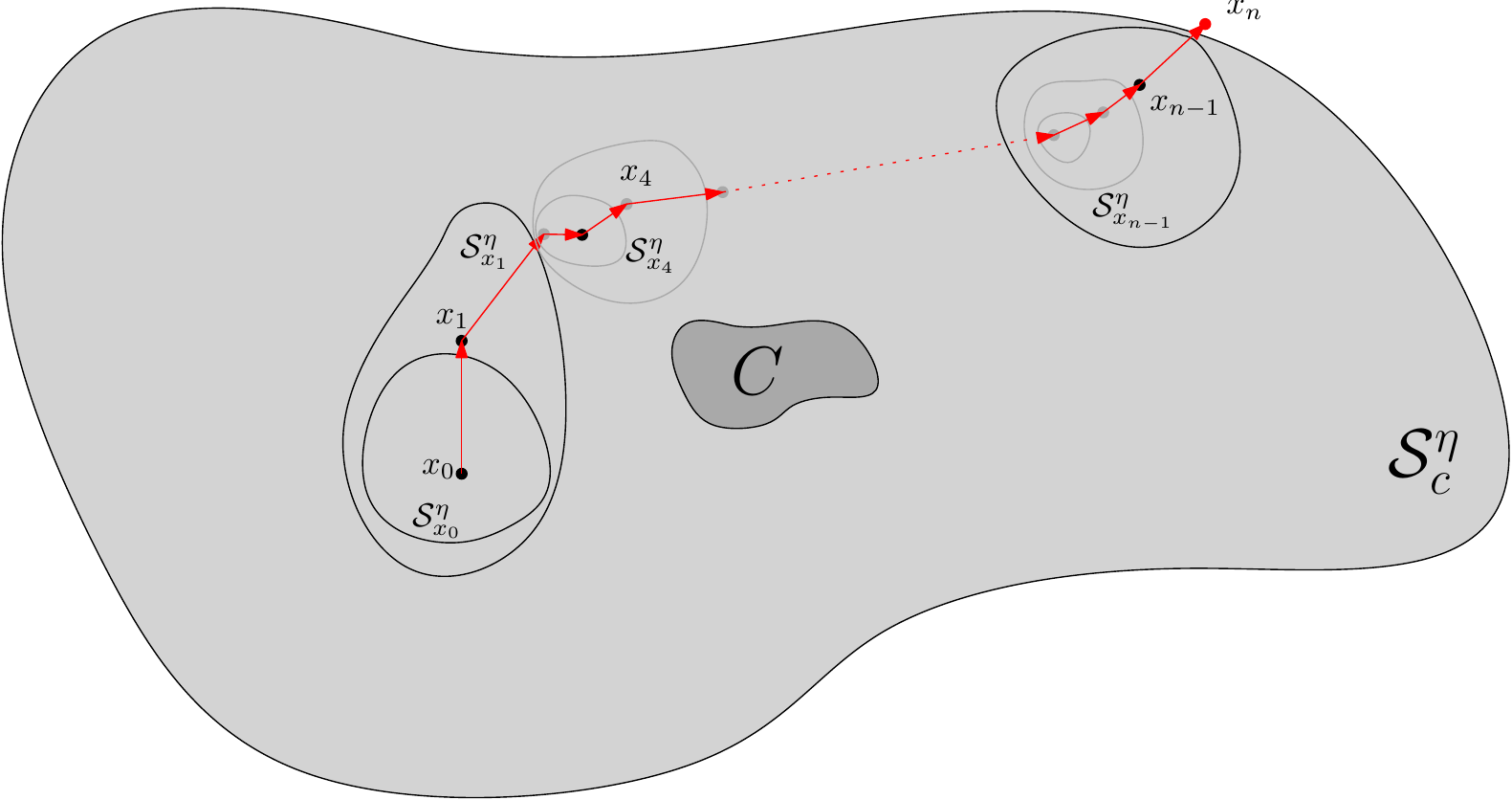}
\caption{\label{fig:eta} A cluster $C$ and its $\eta$-stabilizer $\kS^\eta_C$ with a path $(x_0,\ldots, x_n)$ in $G^\eta$ as in Proposition \ref{prop_ineqconv}. In this example, the endpoint $x_n$ in on the outer boundary of $\kS^\eta_C$.}
\end{center}
\end{figure}

Let us give an interpretation for this proposition. Imagine that, for each vertex $x\in V$, we must pay a price $\tilde{r}(\cmp_x)^{\beta}$ in order to travel along one of its outgoing edges in $G^\eta$. Inequality \eqref{ineqconv_weak} tells us that, in order to travel from some site $x$ to some other site $y$, we must pay a total price at least $d(x,y)^{\beta/\alpha}$. The second inequality \eqref{ineqconv_strong} says that when $\beta$ is close to $1$ and when the cluster $C$ is big enough, if we want to exit $\kS^\eta_C$ starting from a some boundary point of $C$, then any road staying inside $\kS^\eta_C \setminus C$ will cost more than the price $\tilde{r}(C)^{\beta}$ required to travel directly from a site of $C$ to the outer boundary of $\kS^\eta_C$.

The proof of the proposition is based on the concavity formula stated in the next lemma.
\begin{lemm}\label{lemm_conv}
Let $\varepsilon,\theta \in [0,1]$. We have
\begin{equation*}
\inf \left\{
(x_1^\theta +\ldots + x_k^\theta) \,:\,
k \in \N, \,
x_1 + \ldots + x_k = 1, \,
0 \leq x_i \leq \varepsilon
\right\}
= g(\varepsilon,\theta)
\end{equation*}
where $g(\varepsilon,\theta) \defeq \lfloor \frac{1}{\varepsilon}\rfloor \varepsilon^\theta + (1-\lfloor \frac{1}{\varepsilon}\rfloor\varepsilon)^\theta \geq 1$.
\end{lemm}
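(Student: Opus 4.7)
The plan is to exploit concavity of the function $t \mapsto t^\theta$ on $[0,\infty)$, which holds since $\theta \in [0,1]$. Writing $F(x_1,\ldots,x_k) = \sum_i x_i^\theta$, I want to minimize $F$ over the feasible set $K_k = \{(x_i) : \sum_i x_i = 1,\ 0 \leq x_i \leq \varepsilon\}$ and over $k \in \N$. The standard fact behind the argument is: for concave $f$ and $a+b$ fixed, $f(a)+f(b)$ is minimized when $a,b$ are as far apart as possible (a direct consequence of $t \mapsto f(a+t)+f(b-t)$ being monotone on each side of the symmetric point).

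Step 1 (rearrangement to extremal configurations). Given any feasible point with two indices $i \neq j$ such that $0 < x_i \leq x_j < \varepsilon$, I would set $\delta \defeq \min(x_i,\ \varepsilon - x_j) > 0$ and replace $(x_i,x_j)$ by $(x_i - \delta,\ x_j + \delta)$. The new tuple is still in $K_k$, and by the concavity fact above, $F$ does not increase. Each such swap either drives a coordinate down to $0$ or up to $\varepsilon$, so the number of coordinates lying in $(0,\varepsilon)$ strictly decreases; after finitely many swaps, at most one coordinate is in $(0,\varepsilon)$ and all others equal $0$ or $\varepsilon$.

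Step 2 (evaluation). Such a configuration is determined by an integer $m$ (the number of coordinates equal to $\varepsilon$) and a remainder $r = 1 - m\varepsilon \in [0,\varepsilon)$. The constraint $r < \varepsilon$ forces $m = \lfloor 1/\varepsilon \rfloor$, and then $F = m\varepsilon^\theta + r^\theta = g(\varepsilon,\theta)$. Since this configuration is actually achieved (take $k \geq \lceil 1/\varepsilon \rceil$ and pad with zeros, using the convention $0^\theta = 0$), we conclude $\inf F = g(\varepsilon,\theta)$.

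Step 3 (the bound $g \geq 1$). For any $x \in [0,1]$ and $\theta \in [0,1]$ one has $x^\theta \geq x$, hence
\[
g(\varepsilon,\theta) \;=\; \lfloor 1/\varepsilon \rfloor \varepsilon^\theta + r^\theta \;\geq\; \lfloor 1/\varepsilon \rfloor \varepsilon + r \;=\; 1.
\]
The only delicate point is justifying the rearrangement rigorously, but since the count of coordinates lying strictly between $0$ and $\varepsilon$ drops by at least one at every swap, termination is immediate and no additional technical work is required.
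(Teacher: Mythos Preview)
Your proof is correct and follows essentially the same mass-transfer argument as the paper: both exploit concavity of $t\mapsto t^\theta$ to show that pushing coordinates toward the endpoints $0$ and $\varepsilon$ can only decrease $\sum x_i^\theta$, forcing the minimizing configuration to have $\lfloor 1/\varepsilon\rfloor$ coordinates equal to $\varepsilon$ and one remainder. Your termination argument (the number of coordinates strictly in $(0,\varepsilon)$ drops at each swap) and your verification of $g\geq 1$ via $x^\theta\geq x$ are both slightly more explicit than the paper's sketch, but the idea is identical.
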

\begin{proof}[Proof of Lemma \ref{lemm_conv}.]
Set $n = \lfloor \frac{1}{\varepsilon}\rfloor$. Choosing $k = n+1$,
$x_1 = \ldots = x_n = \varepsilon$ and $x_{n+1} = 1- n\varepsilon$ we see that the infimum is indeed smaller that $n\varepsilon^\theta + (1-n\varepsilon)^\theta = g(\varepsilon,\theta)$. For the converse inequality, we use the fact that, for $a\geq b$, we have $(a+x)^\theta + (b-x)^\theta \geq a^\theta + b^\theta$ for any $x\geq 0$ and work by induction to transfer mass $\varepsilon$ on the first $n$ values $x_1,\ldots, x_n$ and the remaining mass $1-n\varepsilon$ on $x_{n+1}$. This argument also shows that $g(\varepsilon,\theta) \geq 1$ for any choice $\varepsilon,\theta \in [0,1]$. The details are left out to the reader.
\end{proof}

\begin{proof}[Proof of Proposition \ref{prop_ineqconv}.]
Set
\begin{equation*}
D = \sum_{i=0}^{n-1} \tilde{r}(\cmp_{x_i})^{\alpha} \quad \hbox{and} \quad \varepsilon  = \max_{0\leq i \leq n-1}\frac{\tilde{r}(\cmp_{x_i})^{\alpha}}{D}.
\end{equation*}
On the one hand, using Lemma \ref{lemm_conv}, we find that
\begin{equation*}
\sum_{i=0}^{n-1} \tilde{r}(\cmp_{x_i})^{\beta} =  \sum_{i=0}^{n-1} \left(\tilde{r}(\cmp_{x_i})^{\alpha}\right)^{\frac{\beta}{\alpha}} \geq g\left(\varepsilon,\frac{\beta}{\alpha}\right) D^{\frac{\beta}{\alpha}}.
\end{equation*}
On the other hand, Proposition \ref{propsizestab} states that $d(x_i,x_{i+1}) \leq 1+\gamma r(\cmp_{x_i})^\alpha \leq \gamma \tilde{r}(\cmp_{x_i})^\alpha$ hence
\begin{equation}\label{ineqtec1}
D \geq \frac{1}{\gamma} \sum_{i=0}^{n-1} d(x_i,x_{i+1}) \geq \frac{1}{\gamma}d(x_0,x_n).
\end{equation}
Combining these inequalities, we find that
\begin{equation*}
\sum_{i=0}^{n-1} \tilde{r}(\cmp_{x_i})^{\beta} \geq g\left(\varepsilon,\frac{\beta}{\alpha}\right)\left(\frac{1}{\gamma}\right)^{\frac{\beta}{\alpha}}  d(x_0,x_{n})^{\frac{\beta}{\alpha}}.
\end{equation*}
Inequality \eqref{ineqconv_weak} now follows from the fact that $g(\varepsilon,\beta/\alpha) \geq 1$ and $\gamma \simeq 0.41 < 1$.

We now turn our attention to the second inequality. Under the assumption that $d(x_0,C) = 1$ and  $d(x_n,\kS^\eta_C)=1$, the lower bound of Proposition \ref{propsizestab} gives $d(x_0,x_n) \geq \eta r(C)^\alpha - 1$ hence
\begin{eqnarray}
\nonumber\sum_{i=0}^{n-1} \tilde{r}(\cmp_{x_i})^{\beta} &\geq& g\left(\varepsilon,\frac{\beta}{\alpha}\right)\left(\frac{1}{\gamma}\right)^{\frac{\beta}{\alpha}} \left(\eta r(C)^\alpha - 1\right)^{\frac{\beta}{\alpha}}\\
\label{ineqtec2} &=&
g\left(\varepsilon,\frac{\beta}{\alpha}\right)\left(\frac{\eta}{\gamma}  - \frac{1}{\gamma r(C)^\alpha}\right)^{\frac{\beta}{\alpha}}r(C)^{\beta}.
\end{eqnarray}
Since $x_0,\ldots,x_{n-1}$ are all in $\kS^\eta_C \setminus C$, we have $r(\cmp_{x_i})^\alpha \leq \eta r(C)^\alpha$ and therefore
\begin{equation*}
\tilde{r}(\cmp_{x_i})^\alpha \leq \left( 2 + \eta^{\frac{1}{\alpha}} r(C)\right)^\alpha
\end{equation*}
but, according to \eqref{ineqtec1},
\begin{equation*}
D \geq \frac{1}{\gamma}d(x_0,x_n) \geq \frac{1}{\gamma} (\eta r(C)^\alpha - 1).
\end{equation*}
These last two inequalities combined together yield
\begin{equation*}
\varepsilon \leq \frac{\left( 2 + \eta^{\frac{1}{\alpha}} r(C)\right)^\alpha}{\frac{1}{\gamma} (\eta r(C)^\alpha - 1)} = \gamma\frac{\left(\frac{2}{\eta^{\frac{1}{\alpha}}r(C)} + 1 \right)^\alpha}{\left(1 - \frac{1}{\eta r(C)^\alpha}\right)}
\end{equation*}
For our particular choice of the parameters $\alpha = 2.5$, $\eta = 0.1$ and when $r(C) \geq 100$, one can check that $\varepsilon \leq 0.47$ which, in turn, implies that for $1\leq \beta \leq 1.01$
\begin{equation*}
g\left(\varepsilon,\frac{\beta}{\alpha}\right)\left(\frac{\eta}{\gamma}  - \frac{1}{\gamma r(C)^\alpha}\right)^{\frac{\beta}{\alpha}} \geq 1.01 \geq \beta.
\end{equation*}
This inequality together with \eqref{ineqtec2} completes the proof of the proposition.
\end{proof}

\subsection{A time-delayed branching Markov chain}\label{sec:TDBRW}

We now introduce another stochastic process that we will use to dominate the contact process.
This process will be defined on the oriented graph $G^\eta$ but we give here a general description. Let $H = (W,\cdot\to\cdot)$ denote an oriented graph. We assume that every site has finite out-degree. For each $x\in W$, consider a real-valued random variable
\[
\tau_x > 0
\]
together with a family of integer-valued random variables
\[
B_{x,y_1} \, ; \, \ldots \, ; \, B_{x,y_n} \in \lin 0 ; \infty \irin
\]
where $\{y_1,\ldots y_n\}$ is the set of neighbor sites $x\to y_i$. We do not assume any independence between the random variables $(\tau_x,B_{x,y_1},\ldots,B_{x,y_n})$. We call Time-Delayed Branching Markov Chain (TDBMC) a continuous-time system of particles $(X_t(x),x\in W)$ such that:
\begin{itemize}
\item $X_t(x) \in \{0,1,\ldots\}$ represents the number of particles at site $x$, at time $t$. We start from an initial configuration of particles $(X_0(x),x\in W)$ at time $0$. Note that there may be more than one particle per site.
\item Each particle evolves independently of the others: when a particle is created at some time $t$, its survival time and progeny is independent of the evolution of all the other particles in the system at time $t$.
\item When a particle is created at some site $x$, it stays there for a random time $\hat{\tau}_x$ after which it disappears while giving birth to $\hat{B}_{x,y}$ new particles at each neighboring site $x\to y$ with
$$
(\hat{\tau}_x,\hat{B}_{x,y_1},\ldots,\hat{B}_{x,y_n}) \overset{\hbox{\scriptsize{law}}}{=} (\tau_x,B_{x,y_1},\ldots,B_{x,y_n}).
$$
\end{itemize}
In order to define such a process at every time $t\geq 0$, we must ensure that there can be no explosion in finite time. This is the case as soon as
\begin{equation}\label{mintau}
\inf_{x\in W}\E[\tau_x] > 0
\end{equation}
which will be a standing assumption from now on. Let us remark that if $\tau_x = 1$ a.s. for all $x$, then $X$ is a classical discrete time Branching Markov Chain. Another special case is when all the $\tau_x$'s have exponential distribution; then the TDBMC is a continuous time Markov process. Notice however that despite its name, the process $X$ does not in general satisfy the Markov property.

We use the notation $\E_{x}[\cdot]$ to denote the expectation for the process started at time $0$ from one single particle located at site $x$. Define also
\begin{equation}\label{notaTDBMC}
b_x \defeq \E\Big[\sum_{x\to y} B_{x,y}\Big],\quad \lambda_{x,y} \defeq \frac{\E[B_{x,y}]}{b_x} \quad\hbox{ and }\quad u_x \defeq \E[\tau_x],
\end{equation}
with the convention $\lambda_{x,y}=0$ if $b_x = 0$. The next proposition collects properties of this process that we will use.

\begin{prop}\label{propTDBMC} Let $x_0\in W$. Consider the TDBMC $X$ started from a single particle located at site $x_0$.
\begin{enumerate}
\item \label{TDBMC-1} Let $N$ be the total number of particles born in the TDBMC up to time $+\infty$, we have
\[
\E_{x_0}[N] \leq  \sum_{n=0}^\infty \,\sup_{(x_0,\ldots,x_n)\in \mathcal{P}^n_{x_0}}\left(
\prod_{i=0}^{n-1} b_{x_i}\right)
\]
where $\mathcal{P}^n_{x_0}$ is the set of all oriented paths of length $n$ starting at $x_0$.  By convention, the product over an empty index set is equal to $1$.
\item \label{TDBMC-2} Let $T \in [0,+\infty]$ denote the extinction time of $X$. We have
\[
\E_{x_0}[T] \leq \sum_{n=0}^\infty \, \sup_{(x_0,\ldots,x_n)\in \mathcal{P}^n_{x_0}}\left(
u_{x_n}\prod_{i=0}^{n-1} b_{x_i} \right).
\]
\item \label{TDBMC-3} Let $U \subset W$ with $x_0 \notin U$. Consider a modification of the process where all the particles entering $U$ are frozen (\emph{i.e.} when a particle reaches $U$, it does not reproduce and it stays there forever). Then, starting from one particle located at $x_0$, we have
\begin{equation*}
\E_{x_0}\left[
\begin{gathered}
\text{\small{Total number of particles frozen}}\\
\text{\small{in $U$ up to time $t=+\infty$}}
\end{gathered}
\right] \leq
\sup_{(x_0,\ldots,x_n)\in \mathcal{P}^U_{x_0}}\left( \prod_{i=0}^{n-1}b_{x_i}\right)
\end{equation*}
where $\mathcal{P}^U_{x_0}$ is the set of all finite oriented paths $(x_0,\ldots,x_n)$ starting at $x_0$, with $x_i \in W \setminus U$ for $i<n$, and with $x_n \in U$.
\end{enumerate}
\end{prop}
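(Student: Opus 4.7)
The unifying idea for all three parts is a many-to-one decomposition: since distinct particles evolve independently from birth, the expectation of any functional summed over all particles splits into a weighted sum over ancestral paths in $H$, the weight of a path $(x_0,\ldots,x_n)$ being $\prod_{i=0}^{n-1}\E[B_{x_i,x_{i+1}}]$. The technical tool I would prove first is the following ``sum-to-sup'' estimate: for every non-negative $\varphi$ on $W$ and every $n\geq 0$,
\[
\sum_{(x_0,\ldots,x_n)\in\mathcal{P}^n_{x_0}}\varphi(x_n)\prod_{i=0}^{n-1}\E[B_{x_i,x_{i+1}}]\;\leq\;\sup_{(x_0,\ldots,x_n)\in\mathcal{P}^n_{x_0}}\varphi(x_n)\prod_{i=0}^{n-1}b_{x_i}.
\]
The proof is a straightforward induction on $n$: summing over $x_n$ first uses $\sum_{x_n:x_{n-1}\to x_n}\E[B_{x_{n-1},x_n}]\varphi(x_n)\leq b_{x_{n-1}}\sup_{x_n:x_{n-1}\to x_n}\varphi(x_n)$, which replaces one expectation by one supremum and introduces one factor $b_{x_{n-1}}$; peeling off one variable at a time telescopes to the bound.

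Part \ref{TDBMC-1} then follows by taking $\varphi\equiv 1$: the expected number of particles born at generation $n$ equals $\sum_{(x_0,\ldots,x_n)}\prod\E[B_{x_i,x_{i+1}}]$, so summing the resulting bound over $n$ and adding the initial particle yields the stated estimate for $\E_{x_0}[N]$. For part \ref{TDBMC-2} the key observation is that the extinction time $T$ equals the maximum, over all ancestral lineages, of the sum of lifetimes along that lineage; a fortiori $T\leq\sum_p\hat\tau_p$, the sum ranging over every particle ever born. Each particle's lifetime $\hat\tau_p$ is independent of the event $\{p\text{ is born}\}$, since the former depends only on the dynamics of $p$ itself while the latter depends only on the reproductions of its ancestors; so $\E[\hat\tau_p\ind_{\{p\text{ born}\}}]=u_{x(p)}\P\{p\text{ born}\}$. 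Decomposing by generation and ancestral path gives
\[
\E_{x_0}[T]\;\leq\;\sum_{n\geq 0}\sum_{(x_0,\ldots,x_n)\in\mathcal{P}^n_{x_0}}u_{x_n}\prod_{i=0}^{n-1}\E[B_{x_i,x_{i+1}}],
\]
and the lemma applied with $\varphi=u_{\cdot}$ closes the estimate.

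For part \ref{TDBMC-3} a first-step analysis shows that $g(x)\defeq\E_x[\text{total number of frozen particles}]$ satisfies $g(x)=1$ on $U$ and $g(x)=\sum_{y:x\to y}\E[B_{x,y}]g(y)$ off $U$. Let $h(x)\defeq\sup_{(x_0=x,\ldots,x_n)\in\mathcal{P}^U_x}\prod_{i=0}^{n-1}b_{x_i}$. I would introduce the truncation $g_n$ counting only freezings that occur within the first $n$ generations and show by induction on $n$ that $g_n\leq h$: for $x\notin U$,
\[
g_{n+1}(x)\;\leq\;\sum_{y:x\to y}\E[B_{x,y}]h(y)\;\leq\;b_x\sup_{y:x\to y}h(y)\;\leq\;h(x),
\]
the last inequality because prepending $x$ to any path from $y$ to $U$ produces a path in $\mathcal{P}^U_x$ whose product of $b$'s is $b_x$ times the original. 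Monotone convergence $g_n\uparrow g$ then yields $g(x_0)\leq h(x_0)$, which is the desired bound.

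No part looks seriously difficult; the only delicate point is the independence argument used in part \ref{TDBMC-2}, which relies squarely on the standing convention that distinct particles' evolutions (including their lifetimes) are mutually independent once they are born. Item \ref{TDBMC-1} uses the full strength of the many-to-one plus sum-to-sup reduction, while \ref{TDBMC-3} uses only a single application, which is why its bound is a single supremum rather than a sum of supremums.
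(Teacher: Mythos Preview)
Your proposal is correct and follows essentially the same approach as the paper. Your ``sum-to-sup'' lemma is precisely the paper's trick of writing $\E[B_{x,y}]=b_x\lambda_{x,y}$ with $(\lambda_{x,y})$ a defective transition kernel and bounding $\sum_{\text{paths}}\varphi(x_n)\prod b_{x_i}\lambda_{x_i,x_{i+1}}\leq\big(\sup_{\text{paths}}\varphi(x_n)\prod b_{x_i}\big)\big(\sum_{\text{paths}}\prod\lambda_{x_i,x_{i+1}}\big)\leq\sup_{\text{paths}}\varphi(x_n)\prod b_{x_i}$; you simply peel variables one at a time instead of factorising the whole product at once. For Item~\ref{TDBMC-2} the paper uses the recursion $\E_{x_0}[T]\leq u_{x_0}+\sum_{x_1}b_{x_0}\lambda_{x_0,x_1}\E_{x_1}[T]$, which unfolds to the same sum over paths as your bound $T\leq\sum_p\hat\tau_p$; and for Item~\ref{TDBMC-3} the paper reduces to discrete time and expands $\E_{x_0}[X'_n(x)]$ over paths, which is the unfolded version of your induction $g_n\leq h$.
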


\begin{proof}
Starting from one particle at site $x_0$ and conditioning on its progeny, we get the relation
\begin{equation*}
\E_{x_0}[N] = 1 + \sum_{x_0\to x_1}\E[B_{x_0,x_1}]\E_{x_1}[N] = 1 + \sum_{x_0\to x_1}b_{x_0}\lambda_{x_0,x_1}\E_{x_1}[N].
\end{equation*}
Expanding this induction relation we get by monotone convergence
\begin{align*}
\E_{x_0}[N] & = \sum_{n=0}^{\infty} \, \sum_{(x_0,\ldots,x_n) \in \mathcal{P}^n_{x_0}} \, \prod_{i=0}^{n-1}b_{x_i}\lambda_{x_{i},x_{i+1}}\\
&\leq  \sum_{n=0}^{\infty} \left(\sup_{(x_0,\ldots,x_{n}) \in \mathcal{P}^n_{x_0}} \, \prod_{i=0}^{n-1}b_{x_{i}}\right)
\left(\sum_{(x_0,\ldots,x_n) \in \mathcal{P}^n_{x_0}} \, \prod_{i=0}^{n-1}\lambda_{x_{i},x_{i+1}}\right).
\end{align*}
Recalling that $(\lambda_{x,y})$ is a (possibly defective) transition kernel \emph{i.e.} $\sum_{y\sim x}\lambda_{x,y} \leq 1$, the sum after the supremum on the right hand side of the last inequality is bounded above by $1$ which completes the proof of the first statement.

The proof of Item \ref{TDBMC-2} is obtained similarly starting from the inequality
\begin{equation*}
\E_{x_0}[T] \leq \E[\tau_{x_0}] + \sum_{x_0\to x_1}\E[B_{x_0,x_1}]\E_{x_1}[T] = u_{x_0} + \sum_{x_0\to x_1}b_{x_0}\lambda_{x_0,x_1}\E_{x_1}[T]
\end{equation*}
which gives, using the same bounds,
\begin{equation*}
\E_{x_0}[T] \leq \sum_{n=0}^{\infty} \, \sum_{(x_0,\ldots,x_n) \in \mathcal{P}^n_{x_0}} u_{x_n} \, \prod_{i=0}^{n-1}b_{x_i}\lambda_{x_{i},x_{i+1}}
\leq \,  \sum_{n=0}^{\infty} \sup_{(x_0,\ldots,x_{n}) \in \mathcal{P}^n_{x_0}}\left(u_{x_n}\prod_{i=0}^{n-1}b_{x_{i}}\right).
\end{equation*}

We now prove Item \ref{TDBMC-3}. The number of particles frozen does not depend on the $\tau_x$'s, so we just need to consider the case $\tau_x = 1$ a.s. for all $x\in W$. But in this case, the TDBMC is simply a discrete-time branching Markov chain. Let $X'_{n}(x)$ denote the number of particles at time $n$ and site $x$ for the discrete-time branching Markov chain obtained by freezing particles in $U$. Its transition kernel $(B'_{x,y})_{x,y\in W}$ is given by
\[
B'_{x,y} =
\begin{cases}
\ind_{\{x=y\} }& \text{if $x\in U$,}\\
B_{x,y}& \text{otherwise}
\end{cases}
\]
(here, we implicitly added a loop at each site $x\in U$ so that $x\to x$). Now, define
\[
b'_x =
\begin{cases}
1& \text{if $x\in U$,}\\
b_x& \text{otherwise.}
\end{cases}
\quad \text{and} \quad
\lambda'_{x,y} \defeq
\begin{cases}
\E[B'_{x,y}]/b'_x& \text{if $b'_x >0$,}\\
0& \text{otherwise.}
\end{cases}
\]
Once again, we have a recurrence relation, namely
\begin{equation*}
\E_{x_0}[X'_n(x)] = \sum_{y\to x} \E_{x_0}[X'_{n-1}(y)]\lambda'_{y,x}b'_y
\end{equation*}
which implies
\[
\E_{x_0}[X'_n(x)] = \sum_{
\substack{
(x_0,\ldots ,x_{n})\in \mathcal{P}^n_{x_0}\\
x_n = x
}
}
\, \prod_{i=0}^{n-1} b'_{x_i}\lambda'_{x_i,x_{i+1}}.
\]
(the paths in $\mathcal{P}^n_{x_0}$ above are considered for the new graph where there is a loop at each site of $U$). Summing over all $x\in U$, we find that
\begin{align*}
\E_{x_0}\left[
\begin{gathered}
\text{\small{Total number of particles}}\\
\text{\small{frozen in $U$ up to time $n$}}
\end{gathered}
\right]
& = \sum_{\substack{
(x_0,\ldots ,x_{n})\in \mathcal{P}^n_{x_0}\\
x_n \in U}
}
\, \prod_{i=0}^{n-1} b'_{x_i}\lambda'_{x_i,x_{i+1}}\\
&\leq \left( \sup_{
\substack{
(x_0,\ldots,x_k)\in \mathcal{P}^U_{x_0}\\
k \leq n
}}
\, \prod_{i=0}^{k-1}
b_{x_i} \right)
\left(
\sum_{
\substack{
(x_0,\ldots ,x_{n})\in \mathcal{P}^n_{x_0}\\
x_n \in U
}
}
\, \prod_{i=0}^{n-1} \lambda'_{x_i,x_{i+1}}
\right)\\
&\leq \sup_{
\substack{
(x_0,\ldots,x_k)\in \mathcal{P}^U_{x_0}\\
k \leq n
}}
\, \prod_{i=0}^{k-1}
b_{x_i}.
\end{align*}
We conclude the proof by letting $n$ go to infinity.
\end{proof}

\subsection{Proof of Theorem \ref{TheoCMPtoCP}}\label{sec:prooftheoCP}

We can now state the main estimates which assert that, starting from
a completely infected cluster $C$, the expected number of infections exiting the $\eta$-stabiliser $\kS^\eta_C$ for the contact process inside $\kS^\eta_C$ decreases faster than exponentially with respect to the cluster's weight.

\begin{prop}[main estimates]\label{prop-mainestimates} Fix $\eta=0.1$ and $\alpha = 2.5$. Suppose that the CMP with parameters given by \eqref{hypotheo} has no infinite cluster for some $\Delta\geq 0$. Then, there exists  $\lambda_0 >0$ depending only on $\Delta$ such that, for any cluster
$C\in \cmp$ and any infection rate $\lambda \leq \lambda_0$, we have
\begin{equation}\label{main-exit}
\E\left[
\begin{gathered}
\text{\small{Total number of infections exiting $\kS_C^\eta$}}\\
\text{\small{for the contact process $\xi^C_{|\kS^\eta_C}$}}
\end{gathered}
\right]
\leq \frac{1}{2}e^{-\tilde{r}(C)^{1.01}}
\end{equation}
(where $\tilde{r}(C) = r(C) + 2$ as in \eqref{deftilder}) and
\begin{equation}\label{main-time}
\E\left[
\text{\small{Extinction time of the contact process $\xi^C_{|\kS^\eta_C}$}}\\
\right]
\leq e^{3 r(C)}.
\end{equation}
\end{prop}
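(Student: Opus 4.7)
The plan is to proceed by induction on the well-founded partial order $\vartriangleright$ on $\cmp$ (well-foundedness follows from Proposition \ref{prop:orientedmisc} item \ref{orientedmisc4} and the assumption that no infinite cluster exists): I take as induction hypothesis that \eqref{main-exit} and \eqref{main-time} hold for every $C' \vartriangleleft C$ and derive them for $C$. The heart of the argument couples the contact process $\xi^C_{|\kS^\eta_C}$ with a time-delayed branching Markov chain on $G^\eta$ and combines Proposition \ref{propTDBMC} with the path estimates of Proposition \ref{prop_ineqconv}.

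I would build the TDBMC so that a particle at $x$ encodes a restart of the contact process at $x$ inside the region $\kS^\eta_{\cmp_x}$: its lifetime $\tau_x$ dominates the extinction time of $\xi^{\{x\}}_{|\kS^\eta_{\cmp_x}}$, and its offspring $B_{x,y}$ at each $x \overset{\kS^\eta}{\to} y$ dominates the number of infections exiting $\kS^\eta_{\cmp_x}$ through $y$. Stochastic domination is obtained from the graphical construction together with the nesting structure of $\eta$-stabilisers (analogue of Corollary \ref{coro:stabdescend} for $\overset{\eta}{\to}$): each infection that crosses the outer boundary of a sub-stabiliser is restarted as a fresh particle. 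For $\cmp_x \vartriangleleft C$ the induction hypothesis provides
$$u_x \leq e^{3 r(\cmp_x)} \quad \text{and} \quad b_x \leq \tfrac{1}{2} e^{-\tilde{r}(\cmp_x)^{1.01}},$$
while for the root cluster itself the internal dynamics on $C$ (a graph of size at most $r(C)$ by Proposition \ref{prop:easycmp}) are handled directly, using the classical extinction-time bound for the contact process on a finite graph and, for the exit count, a perimeter-crossing estimate that exploits the sub-criticality of $\lambda \Delta$.

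Applying Proposition \ref{propTDBMC} item \ref{TDBMC-3} with $U$ the outer boundary of $\kS^\eta_C$ gives, up to a combinatorial prefactor $|C| \leq r(C)$,
$$\E\bigl[\text{number of exits of } \xi^C_{|\kS^\eta_C}\bigr] \leq r(C)\, \sup_{(x_0,\ldots,x_n)}\prod_{i=0}^{n-1} b_{x_i} \leq \tfrac{1}{2}\, r(C) \exp\Bigl(-\sum_{i=0}^{n-1}\tilde{r}(\cmp_{x_i})^{1.01}\Bigr),$$
where the supremum runs over oriented paths in $G^\eta$ starting in $C$, staying in $\kS^\eta_C \setminus C$, and ending at the outer boundary of $\kS^\eta_C$. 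Applying the strong inequality \eqref{ineqconv_strong} of Proposition \ref{prop_ineqconv} at $\beta = 1.01$ (licit because $\alpha = 5/2$, $\eta = 1/10$ and $r(C) \geq 100$) yields $\sum_i \tilde{r}(\cmp_{x_i})^{1.01} \geq 1.01\,\tilde{r}(C)^{1.01}$, and the extra $0.01\,\tilde{r}(C)^{1.01}$ in the exponent swallows the polynomial factor $r(C)$, delivering \eqref{main-exit}. The extinction-time estimate \eqref{main-time} follows analogously from item \ref{TDBMC-2}: the summand for a path of length $n$ is at most $u_{x_n}\prod b_{x_i} \leq e^{3 r(\cmp_{x_n})} \cdot 2^{-n} e^{-\sum_i \tilde{r}(\cmp_{x_i})^{1.01}}$, and summing geometrically in $n$ stays well under $e^{3 r(C)}$.

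The hard part is twofold. First, the precise implementation of the TDBMC coupling: one must argue that restarting infections at every sub-stabiliser boundary preserves enough conditional independence to match the hypotheses of Proposition \ref{propTDBMC}, which asks only that offspring vectors at distinct sites be independent conditionally on the branching tree; the graphical construction should do this, but the bookkeeping across nested stabilisers is delicate. Second, the base of the induction, namely clusters with $r(C) < 100$, falls outside the range of \eqref{ineqconv_strong} and must be treated separately using the weak inequality \eqref{ineqconv_weak} and a direct sub-critical estimate—these small clusters set the scale of $\lambda_0$. The razor-thin $1.01$ gain in \eqref{ineqconv_strong} is exactly what drives the inductive bootstrap, and is the reason the hypothesis $\alpha \geq 5/2$ cannot be substantially relaxed by this method.
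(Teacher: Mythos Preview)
Your overall architecture---induction via a TDBMC coupling on $G^\eta$ combined with Proposition~\ref{prop_ineqconv}---is exactly the paper's, but there is a genuine gap at the root cluster $C$. With your TDBMC definition, a particle at $x_0\in C$ lives in $\kS^\eta_{\cmp_{x_0}}=\kS^\eta_C$, so its offspring already sit on $\partial\kS^\eta_C$; the path from $C$ to the exterior has length~$1$ in $G^\eta$ and $b_{x_0}$ equals precisely the expected exit count you are trying to bound---the argument is circular. If instead the root particle is taken to evolve only inside $C$ (as your ``handled directly'' suggests), then $b_{x_0}$ becomes the expected number of crossings from $C$ into $D=\kS^\eta_C\setminus C$, and this is \emph{not} polynomial in $r(C)$: the contact process on the finite graph $C$ survives for time of order $\exp(\Theta(r(C)))$, so the crossing count is exponential in $r(C)$. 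Your displayed inequality, with its polynomial prefactor $r(C)$ to be ``swallowed'' by the $0.01\,\tilde r(C)^{1.01}$ gain, therefore underestimates the root contribution by an exponential factor, and your appeal to ``sub-criticality of $\lambda\Delta$'' does not help since sites inside $C$ have degree at least $\Delta$.

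The paper closes this gap by separating the dynamics on $C$ and on $D$. It applies the TDBMC argument only \emph{inside $D$}, starting from a boundary point $x_0$ with $d(x_0,C)=1$ (its Step~1); there the induction hypothesis legitimately bounds every $b_{x_i}$, and \eqref{ineqconv_strong} yields an exit probability $\leq e^{-1.01\,\tilde r(C)^{1.01}}$ per entry into $D$. Separately (Steps~2--4), it shows that the probability the process restricted to either $C$ or $D$ never crosses into the other side is at least $2^{-r(C)}$---using positive correlations and self-duality---so the total number $\Gamma$ of $C\leftrightarrow D$ crossings is stochastically dominated by a geometric variable with mean $4^{r(C)}$. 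The final bound (Step~5) is then $4^{r(C)}\cdot e^{-1.01\,\tilde r(C)^{1.01}}\leq\frac12 e^{-\tilde r(C)^{1.01}}$; it is this exponential, not a polynomial, prefactor that the $0.01$ gain must absorb. For the base case the paper does not use \eqref{ineqconv_weak} but rather observes that only finitely many isomorphism classes of $\eta$-stabilisers have $r(C)\leq R$ (via Proposition~\ref{propsizestab}), so $\lambda_0$ can be chosen small enough to handle them all simultaneously by dominated convergence.
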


The strength of these estimates is that $\lambda_0$ only depends on the geometry of $G$ through $\alpha$ and $\Delta$. Thus, the proposition gives bounds that are uniform for any cluster of any graph whose associated CMP has no infinite cluster.  Let us first show how these estimates easily imply Theorem \ref{TheoCMPtoCP}.

\begin{proof}[Proof of Theorem \ref{TheoCMPtoCP}.] By monotonicity of the CMP w.r.t. the expansion exponent $\alpha$, we only need to prove the theorem for $\alpha = 2.5$. Recall that $G^\eta = (V,\cdot\overset{\kS^\eta}{\to}\cdot)$ denotes the oriented graph with vertex set $V$ where the sites $y$ such that $x\overset{\kS^\eta}{\to}y$ are exactly those on the outer boundary of the $\eta$-stabiliser of $x$. Consider the following modification of the contact process where there may be more than one infection at each site.
\begin{itemize}
\item We start from an initial infected vertex $x_0$. 
\item At time $0$, we instantaneously infect every site of the cluster $\cmp_{x_0}$. Then, we run a contact process inside $\kS_{x_0}^{\eta}$, freezing every infection exiting the $\eta$-stabiliser.  
\item At the time when the contact process inside $\kS_{x_0}^{\eta}$ dies out, for each infection that exited $\kS_{x_0}^{\eta}$, we restart an independent process from the endpoint of the infection. This means that, for each infection with endpoint, say $z$, we instantaneously infect every site of $\cmp_z$ and then run an independent contact process inside $\kS_{z}^{\eta}$, freezing the all the infections exiting the $\eta$-stabiliser.
\item We construct the process for all times (or until extinction) by iterating this procedure.  
\end{itemize}
The freezing of particles in this modified process prevents us from coupling it at deterministic times with the real contact process started from the same initial infected site $x_0$. Yet, we can still construct both processes on the same probability space in such way that:
\begin{enumerate}
\item \label{CouplingExtinction} The extinction time of the modified process is larger than or equal to the extinction time of the contact process.
\item For any directed edge of the graph $G$, the total number of infections sent through the edge by the contact process is smaller than or equal to the number of infections sent by the modified process.
\end{enumerate}
This coupling can easily be achieved by using the same sequences of clocks on sites and oriented arrows for both processes. However, contrarily to the graphical construction, in this case, the time on a site (resp. oriented edge) runs only when the site (resp. start vertex) is infected. This ensures that the modified process will never miss any infection clock that the contact process uses. We leave the details to the reader. 

\bigskip

Looking at infections exiting $\eta$-stabilisers, we see that this modified process naturally defines a TDBMC $X$ on the graph $G^\eta$ with transition kernel given by (with the notation of Section \ref{sec:TDBRW}):
\begin{equation}\label{deftau}
\tau_x \defeq \text{Extinction time of the contact process $\xi^{\cmp_x}_{|\kS^\eta_x}$ }
\end{equation}
and for every $y\in V$ such that $x\overset{\kS^\eta}{\to}y$,
\begin{equation}\label{defB}
B_{x,y} \defeq
\begin{cases}
\text{total number of infections reaching} \\
\text{site $y$ for the contact process $\xi^{\cmp_x}_{|\kS^\eta_x}$.}
\end{cases}
\end{equation}
From the coupling with the contact process, we see that condition \ref{CouplingExtinction} on extinction times means that
\begin{equation*}
\inf(t\geq 0, \xi^{x_0}(t) = \emptyset) \leq \inf(t\geq 0, X_t(x) = 0\hbox{ for all $x\in V$}).
\end{equation*}
Using the notation of \eqref{notaTDBMC}, the main estimates translate to
\begin{equation}\label{Zest}
b_x \leq \frac{1}{2}e^{-\tilde{r}({\cmp_x})^{1.01}} \quad \hbox{ and } u_x \leq e^{3 r({\cmp_x})}.
\end{equation}
Therefore, in view of Item \ref{TDBMC-1} of Proposition \ref{propTDBMC}, we find that the expected total number of particles created in $Z$ starting from $x_0$ is bounded by
$$
\sum_{n=0}^\infty \sup_{(x_0,\ldots,x_n)\in \mathcal{P}^n_{x_0}} \prod_{i=0}^{n-1} b_{x_i} \leq \sum_{n=0}^\infty \frac{1}{2^n}.
$$
(here, $\mathcal{P}^n_{x_0}$ denotes the set of paths in $G^\eta$ starting from $x_0$ with length $n$). This means that $X$ creates only finitely many particles a.s., hence its extinction time $T$ is also finite. This in turn implies that the contact process dies out almost surely.
\end{proof}

\begin{proof}[Proof of Proposition \ref{prop-mainestimates}]

The proof works by induction on the weights $r(C)$ of clusters. From now on, we fix
\begin{equation*}
\alpha = 2.5, \quad \Delta \geq 1, \quad \eta = 0.1
\end{equation*}
and we assume that the CMP defined by \eqref{hypotheo} has no infinite cluster. Let us remark that, since, for every cluster $C$, the stabiliser $\kS^\eta_C$ is a finite set, the random variables inside the expectations in \eqref{main-exit} and \eqref{main-time} have exponential tails and are therefore finite. This follows easily from the graphical construction described previously.

For every $R > 1$, there exist only finitely many graphs isomorphic to some $\eta$-stabiliser $\kS^\eta_C$ where $C$ is a cluster with $r(C) \leq R$. This see this, notice that $r(C) \leq R$ implies that every site in the stabiliser has degree at most $R\vee\Delta$. The number of sites in the cluster $C$ is also bounded by $R$. Using Proposition \ref{propsizestab}, we deduce that the number of sites in the stabiliser $\kS^\eta_C$ is also bounded by $f(R)$ for some function $f$ growing fast enough. This proves our assertion since there are only finitely many non-isomorphic graphs with at most $f(R)$ vertices with degrees bounded by $R\vee \Delta$.

For each cluster $C$, when the infection parameter $\lambda$ of the contact process goes to $0$, the expectations in \eqref{main-exit} and \eqref{main-time} tend to $0$ by dominated convergence. This means that we only need to prove Proposition \ref{prop-mainestimates} for clusters satisfying $r(C) \geq R$ where $R = R(\Delta)$  may be chosen arbitrarily large.

From now on, let $R$ be large and $\lambda >0$  such that Proposition \ref{prop-mainestimates} holds for every cluster $C$ with $r(C) \leq R$. We will prove that the same result holds, in fact, for every cluster $C$ with $r(C) \leq \frac{1}{\eta}R$ and the result will follow by induction.

Fix $C\in \cmp$ such that $r(C) \leq \frac{1}{\eta}R$. We use following the notation for conciseness:
\begin{eqnarray*}
S &\defeq& \kS^\eta_C \\
D &\defeq& \kS^\eta_C \setminus C.
\end{eqnarray*}
We decompose the proof in six steps.

\begin{step} For any $x_0 \in D$ with $d(x_0,C)=1$, we have
\begin{equation}\label{exit1}
\E\left[
\begin{gathered}
\text{Total number of infections exiting through $\partial S$}\\
\text{for the contact process $\xi^{x_0}_{|D}$.}
\end{gathered}
\right]
\leq e^{-1.01 \tilde{r}(C)^{1.01}}
\end{equation}
(remark that we do not count in this expectation the infections going from $D$ into $C$) and
\begin{equation}\label{exittime1}
\E\left[
\text{Extinction time of the contact process $\xi^{x_0}_{|D}$ }\\
\right]
 \leq 2c
\end{equation}
where $c$ is a universal constant. 
\end{step}

We consider again the TDBMC $X$ on $G^\eta$ with transition kernel given by \eqref{deftau} and \eqref{defB}, where we freeze all the particles exiting $D$. Using the same arguments as  in the proof of Theorem \ref{TheoCMPtoCP}, we can couple $X$ started with one particle at $x_0$ with the contact process $\xi^{x_0}_{|D}$ in such a way that:
\begin{enumerate}
\item The total number of particles in $X$ frozen on the outer boundary of $S$ is larger than the total number of infections sent outside of $S$ by the contact process $\xi^{x_0}_{|D}$.
\item The extinction time of $X$ is larger than the extinction time of the contact process $\xi^{x_0}_{|D}$.
\end{enumerate}
Now, since every cluster $C'$ inside $D$ is such that $r(C') \leq \eta r(C) \leq R$, we can use the main estimate to upper bound the quantities $(b_x, x\in D)$. Denoting by $\mathcal{P}^{V \setminus S}_{x_0}$ the set of paths in $G^\eta$ which start from $x_0$ and such that $x_i \in D$ for $i<n$ and $x_n \in V \setminus S$, we find with the help of Item \ref{TDBMC-3} of Proposition \ref{propTDBMC} that
\begin{align*}
\E\left[
\begin{gathered}
\text{Total number of infections exiting through $\partial S$}\\
\text{for the contact process $\xi^{x_0}_{|D}$.}
\end{gathered}
\right]
&\leq  \sup_{(x_0,\ldots,x_n)\in \mathcal{P}^{V \setminus S}_{x_0}}\left( \prod_{i=0}^{n-1}b_{x_i}\right)\\
&\leq \sup_{(x_0,\ldots,x_n)\in \mathcal{P}^{V \setminus S}_{x_0}}\!\!\left( \frac{1}{2^n}e^{-\sum_{i=0}^{n-1}\tilde{r}(\cmp_{x_i})^{1.01}}\right)\\
&\leq e^{-1.01 \tilde{r}(C)^{1.01}}
\end{align*}
where we used Proposition \ref{prop_ineqconv} for the last inequality. This completes the proof of \eqref{exit1}. The proof of the second inequality is similar. Let $\mathcal{P}^n_{x_0}$ denote the set of paths in $G^\eta$ staying inside $D$, starting at $x_0$ and of length $n$. We use Item \ref{TDBMC-2} of Proposition \ref{propTDBMC} combined with Proposition \ref{prop_ineqconv} to get that
\begin{align*}
\E\left[
\begin{gathered}
\text{Extinction time of the}\\
\text{contact process $\xi^{x_0}_{|D}$}
\end{gathered}
\right] 
&\leq  \sum_{n=0}^\infty \sup_{(x_0,\ldots,x_n)\in \mathcal{P}^n_{x_0}}\left(
u_{x_n}\prod_{i=0}^{n-1} b_{x_i} \right)\\
&\leq \sum_{n=0}^{\infty}\sup_{(x_0,\ldots,x_n)\in \mathcal{P}^n_{x_0}}\left( \frac{e^{3 r(\cmp_{x_n})}}{2^n}e^{-\sum_{i=0}^{n-1}\tilde{r}(\cmp_{x_i})^{1.01}}\right)\\
&\leq \sum_{n=0}^{\infty}\sup_{(x_0,\ldots,x_n)\in \mathcal{P}^n_{x_0}}\left( \frac{e^{ 3(1 + d(x_0,x_n))^{\frac{1}{\alpha}}}}{2^n}e^{-d(x_0,x_n)^{\frac{1.01}{\alpha}}}\right)\\
&\leq \sum_{n=0}^{\infty} \frac{c}{2^n} \leq 2 c
\end{align*}
where $c$ is the overall supremum on $[0,\infty)$ of the function $x\to 3(1+x)^{\frac{1}{\alpha}} -x^{\frac{1.01}{\alpha}}$.

\begin{step} We have
\begin{equation}
\P\left\{
\begin{gathered}
\text{The contact process $\xi^{D}_{|D}$ never }\\
\text{sends any infection into $C$}
\end{gathered}
\right\} \geq \frac{1}{2^{r(C)}}.
\end{equation}
\end{step}

Let $e_1,\ldots,e_m$ denote the set of edges connecting $D$ to $C$. Using the fact that the contact process has positive correlation at all times (\emph{c.f.} Theorem B$17$, p$9$ of \cite{Li2}) it is easy to check  that
\begin{align*}
\P\left\{
\begin{gathered}
\text{The contact process $\xi^{D}_{|D}$ never }\\
\text{sends any infection into $C$}
\end{gathered}
\right\}
 &=
 \P\left\{
\begin{gathered}
\text{The contact process $\xi^{D}_{|D}$ never sends}\\
\text{any infection through  $e_i$ for all $i\leq m$.}
\end{gathered}
\right\}\\
&\geq
\prod_{i=1}^m
\P\left\{
\begin{gathered}
\text{The contact process $\xi^{D}_{|D}$ never }\\
\text{sends any infection through $e_i$.}
\end{gathered}
\right\}
\end{align*}
Fix $e_i = (x_i \to z_i)$ with $x_i \in D$ and $z_i \in C$. Let $I_i$ denote the total time site $x_i$ spends infected:
\begin{equation*}
I_{i} \defeq \int_{0}^\infty \ind_{\left\{ \xi_{|D}^{D}(t) \cap \{x_i\} \neq \emptyset\right\} }dt.
\end{equation*}
Using the self-duality property of the contact process we find that
\begin{align*}
\E[I_{i}] &= \int_{0}^\infty \P\left\{ \xi_{|D}^{D}(t) \cap \{x_i\} \neq \emptyset\right\} dt\\
&= \int_{0}^\infty \P\left\{ \xi_{|D}^{x_i}(t) \cap \{D\} \neq \emptyset\right\} dt\\
&= \E \left[ \int_{0}^\infty \ind_{\left\{ \xi_{|D}^{x_i}(t) \neq \emptyset\right\} }dt \right]\\
&= \E\left[
\text{Extinction time of the contact process $\xi^{x_i}_{|D}$ }
\right]\\
&\leq 2c
\end{align*}
where we used Step $1$ for the last inequality. Now, recall that an infection propagates through $e_i$ whenever site $x_i$ is infected and a clock attached to the oriented edge $x_i\to z_i$ rings. Since these clocks are independent of the contact process $\xi^{D}_{|D}$, the expected number of infections exiting $\xi^{D}_{|D}$ through $e_i$ is bounded by $2c\lambda$. We can without loss of generality assume that $2c \lambda < \frac{1}{2}$ in which case we have
\begin{equation*}
\P\left\{
\begin{gathered}
\text{The contact process $\xi^{D}_{|D}$ never}\\
\text{sends any infection through $e_i$}
\end{gathered}
\right\} \geq \frac{1}{2}.
\end{equation*}
The claim follows noticing that the number  $m$ of edges between $C$ and $D$ is bounded by $r(C)$.

\begin{step} We have
\begin{equation*}
\P\left\{
\begin{gathered}
\text{The contact process $\xi^{C}_{|C}$ never }\\
\text{sends any infection into $D$}
\end{gathered}
\right\} \geq \frac{1}{2^{r(C)}}.
\end{equation*}
and
\begin{equation}\label{extincC}
\E\left[
\text{Extinction time of $\xi^{C}_{|C}$}
\right] \leq  2^{r(C)}.
\end{equation}
\end{step}

This step is easy. We use a very crude estimate: the probability that no infection ever escapes $C$
and that the contact process dies out before time $1$ is larger than the probability that every (recovery) clock attached to a vertex of $C$ rings before time $1$ and no (infection) clock attached to an oriented edge with a start vertex in $C$ rings before time $1$. Therefore, for $\lambda < 0.1$, using again that $r(C)$ upper bounds the number of outgoing edges and vertices in $C$, we get
\begin{equation*}
\P\left\{
\begin{gathered}
\text{The contact process $\xi^{C}_{|C}$ never sends any}\\
\text{infection into $D$ and dies out before time $1$}
\end{gathered}
\right\} \geq \left((1-e^{-1})e^{-\lambda}\right)^{r(C)} \geq
\frac{1}{2^{r(C)}}.
\end{equation*}
Now, comparing the contact process $\xi^C_{C}$ with the modified process obtained by re-infecting every site of $C$ at each integer time when there is at least one infected site, it follows from the previous inequality that the extinction time of $\xi^C_{C}$ is stochastically dominated by a geometric random variable with parameter $\frac{1}{2^{r(C)}}$. This gives \eqref{extincC}.

\begin{step} Denote by $\overrightarrow{\Gamma}^{A}_{|S}$ the total number of infections traveling through an oriented edge from $C$ to $D$ for the contact process $\xi^A_{|S}$ started from the configuration $A$ and restricted to $S$. Similarly, define
$\overleftarrow{\Gamma}^{A}_{|S}$ as the number of infections traveling through an oriented edge from $D$ to $C$ and
\begin{equation*}
\Gamma^A_{|S} \defeq \overrightarrow{\Gamma}^{A}_{|S} + \overleftarrow{\Gamma}^{A}_{|S}
\end{equation*}
the total number of infections travelling between $C$ and $D$. For any initial infected set $A \subset S$, we have
\begin{equation}\label{geomcrossing}
\P\left\{\Gamma^A_{|S} \geq k\right\} \leq \left(1 - \frac{1}{4^{r(C)}}\right)^k\quad\text{for all $k = 0,1,\ldots$}
\end{equation}
\end{step}
By stochastic monotonicity of the contact process stated in \eqref{couplingCP}, it suffices to prove the result for $A = S$. We consider a modification of $\xi^{S}_{|S}$ where the process resets every time an infection travels between $C$ and $D$, \emph{i.e.} each time an infection travels between $C$ and $D$, we start again from every site of $S$ infected. The number of resets  $\tilde{\Gamma}$ for this process stochastically dominates $\Gamma^S_{|S}$. Moreover, using the estimates from Step $2$ and Step $3$, after each reset there is probability at least $\frac{1}{4^{r(C)}}$ that the process dies out before any infection can travel between $C$ and $D$. Hence $\tilde{\Gamma}$ is dominated by a geometric random variable with parameter  $\frac{1}{4^{r(C)}}$ yielding \eqref{geomcrossing}.

\begin{step}[Main estimate \eqref{main-exit}] We have
\begin{equation*}
\E\left[
\begin{gathered}
\text{Total number of infections exiting through $\partial S$}\\
\text{for the contact process $\xi^C_{|S}$.}
\end{gathered}
\right]
\leq \frac{1}{2}e^{-\tilde{r}(C)^{1.01}}.
\end{equation*}
\end{step}
We consider the contact process $\xi^{C}_{|S}$. Let $\tau_{1},\tau_2,\ldots,\tau_{\overrightarrow{\Gamma}^{C}_{|S}}$ denote the times where an infection crosses from $C$ to $D$ and let $x_1,\ldots, x_{\overrightarrow{\Gamma}^{C}_{|S}}$ denote the sites of $D$ where these infections arrive. We can decompose the process $\xi^C_{|S}$ inside $D$ as a superposition of contact processes $\tilde{\xi}^{x_i}_{|D}$ \emph{i.e.}
\[
\xi^{C}_{|S}(t) \cap D = \bigcup_{
\substack{
1\leq i \leq \overrightarrow{\Gamma}^{C}_{|S}\\
\tau_i \leq t
}}
\tilde{\xi}_{|D}^{x_i}(t - \tau_i).
\]
Since these contact process are created with the same graphical construction, they are not independent however, conditionally on the event $\Big\{ \overrightarrow{\Gamma}^{C}_{|S} \geq k,\; x_i = x \Big\}$, the process $(\tilde{\xi}^{x_i}_{|D}(t - \tau_i), t\geq \tau_i)$ has the same law as a contact process restricted to $D$ and started with only the site $x_i$ infected. On the other hand, we can write
\begin{equation*}
\E\left[
\begin{gathered}
\text{Total number of infections exiting $\partial S$}\\
\text{for the contact process $\xi^C_{|S}$}
\end{gathered}
\right]
 \leq \E\left[ \sum_{k=1}^{\overrightarrow{\Gamma}^{C}_{|S}} \tilde{N}_k \right]
\end{equation*}
where $\tilde{N}_i$ is the total number of infections generated by $\tilde{\xi}^{x_i}_{|D}$ exiting $D$  through $\partial S$.
Thus, combining the results of Step $1$ and Step $4$, we get
\begin{multline*}
\E \left[
\begin{gathered}
\text{Total number of infections exiting $\partial S$}\\
\text{for the contact process $\xi^C_{|S}$}
\end{gathered}
\right]
\leq \sum_{k=1}^\infty \sum_{
 \substack{
 x\in D\\
 d(x,C)=1}
 } \E\left[ \ind_{\{ \overrightarrow{\Gamma}^{C}_{|S}\geq k,\; x_k = x \}} \tilde{N}_k \right]\\
\begin{aligned}
 & =
 \sum_{k=1}^\infty \sum_{
 \substack{
 x\in D\\
 d(x,C)=1}
 }
\P\left\{ \overrightarrow{\Gamma}^{C}_{|S}\geq k,\; x_k = x \right\}
\E\left[
\begin{gathered}
\text{Total number of infections exiting}\\
\text{$\partial S$ for the contact process $\xi^{x}_{|D}$}
\end{gathered}
\right]\\
&\leq
 \sum_{k=1}^\infty \sum_{
 \substack{
 x\in D\\
 d(x,C)=1}
 }
\P\left\{ \overrightarrow{\Gamma}^{C}_{|S}\geq k,\; x_k = x \right\} e^{-1.01 \tilde{r}(C)^{1.01}}\\
&=
 \sum_{k=1}^\infty
\P\left\{ \overrightarrow{\Gamma}^{C}_{|S}\geq k\right\} e^{-1.01 \tilde{r}(C)^{1.01}}\\
&\leq \sum_{k=1}^\infty  \left(1 - \frac{1}{4^{r(C)}}\right)^k e^{-1.01 \tilde{r}(C)^{1.01}}\\
& \leq 4^{r(C)} e^{-1.01 \tilde{r}(C)^{1.01}}\\
&\leq \frac{1}{2} e^{-\tilde{r}(C)^{1.01}}.
\end{aligned}
\end{multline*}

\begin{step}[Main estimate \eqref{main-time}] We have
\begin{equation*}
\E\left[
\text{Extinction time of the contact process $\xi^C_{|S}$ }
\right]
\leq e^{3\tilde{r}(C)}.
\end{equation*}
\end{step}

We use the same idea as in Step $5$. First, we write
\begin{align*}
\E\left[
\text{Extinction time of $\xi^C_{|S}$ }
\right]
& = \E\left[\int_{0}^\infty \ind_{\{\xi_{|S}^C(t) \neq \emptyset \} }dt\right]\\
&\leq \E\left[\int_{0}^\infty \ind_{\{\xi_{|S}^C(t)\cap C \neq \emptyset \} }dt\right] + \E\left[\int_{0}^\infty \ind_{\{\xi_{|S}^C(t)\cap D \neq \emptyset \} }dt\right].
\end{align*}
Recalling the notation of Step $5$, and denoting by $\tilde{T}_i$ the extinction time of $\tilde{\xi}^{x_i}_{|D}$  we have
\begin{equation*}
\E\left[\int_{0}^\infty \ind_{\{\xi_{|S}^C(t)\cap D \neq \emptyset \} }dt\right] \leq
\E\left[ \sum_{k=1}^{\overrightarrow{\Gamma}^{C}_{|S}} \tilde{T}_k \right].
\end{equation*}
Copying the arguments we used in the previous step, we find that
\begin{eqnarray}
\nonumber\E\left[\int_{0}^\infty \ind_{\{\xi_{|S}^C(t)\cap D \neq \emptyset \} }dt\right] &\leq&
\sum_{k=1}^{\infty} \sum_{
 \substack{
 x\in D\\
 d(x,C)=1}
 }
\!\!\!\!\P\left\{ \overrightarrow{\Gamma}^{C}_{|S}\geq k,\; x_k = x \right\}\! \E\left[\hbox{Extinction time of $\xi^{x}_{|D}$}\right]\\
\nonumber&\leq& 2 c \sum_{k=1}^{\infty}  \P\left\{ \overrightarrow{\Gamma}^{C}_{|S}\geq k \right\}\\
\label{last1}&\leq& 2 c 4^{r(C)}.
\end{eqnarray}
It remains to bound the expected total time during when the contact process has an infected site in $C$. Again, we decompose the process $\xi^C_{|S}$ inside $C$ as a superposition of contact processes. More rigorously, let $\gamma_{1},\gamma_2,\ldots,\gamma_{\overleftarrow{\Gamma}^{C}_{|S}}$ denote the times where an infection crosses from $D$ to $C$ and let $y_1,\ldots, y_{\overleftarrow{\Gamma}^{C}_{|S}}$ denote the sites in $C$ where these infections arrive. We write $\xi_{|S}^{C}\cap C$ as a superposition of contact processes $\hat{\xi}^{y_i}_{|C}$:
\[
\xi^{C}_{|S}(t) \cap C = \xi^{C}_{|C}(t) \cup \bigcup_{
\substack{
1\leq i \leq \overleftarrow{\Gamma}^{C}_{|S}\\
\gamma_i \leq t}
} \hat{\xi}_{|D}^{y_i}(t - \gamma_i).
\]
This yields
\begin{equation*}
\E\left[\int_{0}^\infty \ind_{\{\xi_{|S}^C(t)\cap C \neq \emptyset \} }dt\right] = \E[T] + \sum_{k=1}^{\infty}\E\left[ \ind_{\{
\overleftarrow{\Gamma}^{C}_{|S} \geq k \} }\hat{T}_k \right]
\end{equation*}
where $T$ is the extinction time of $\xi^C_{|C}$ and $\hat{T}_i$ is the extinction time of $\hat{\xi}^{y_i}_{|C}$. Conditionally on
the event $\left\{ \overleftarrow{\Gamma}^{C}_{|S} \geq k \right\}$, the process $\hat{\xi}^{y_k}_{|C}$ is stochastically dominated by $\xi_{|C}^C$. Thus, using the estimates of Step $3$ and Step $4$, we get
\begin{eqnarray}
\nonumber\E\left[\int_{0}^\infty \ind_{\{\xi_{|S}^C(t)\cap C \neq \emptyset \} }dt\right]
&\leq& \E[T] + \sum_{k=1}^{\infty}\P\left\{ \overleftarrow{\Gamma}^{C}_{|S} \geq k \right\}\E[T]\\
\label{last2}&\leq& 8^{r(C)}.
\end{eqnarray}
Finally, putting together \eqref{last1} and \eqref{last2}, we find that
$$
\E\left[
\hbox{Extinction time of $\xi^C_{|S}$ }
\right] \leq 2c4^{r(C)} + 8^{r(C)} \leq e^{3 r(C)}
$$
which completes Step $6$ and finishes the proof of the main estimates.
\end{proof}


\section{Questions and possible extensions. }
\label{sec:questions}
There are several natural questions left open in the paper -- some have already been stated. Here we present a few more which, in our opinion, might be interesting to look at.
\medskip

First, we have seen that the critical parameter $p_c$ for Bernoulli CMP on $d$-dimensional lattices is non trivial. Can this result be generalized to other graphs? We believe this to be true with minimal assumptions on the graphs and make the following conjecture:
\begin{conj}
Bernoulli CMP on any graph with bounded degrees has a non-trivial phase transition.  
\end{conj}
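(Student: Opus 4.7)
By Corollary \ref{coro:pc<1} we already have $p_c(\alpha) < 1$ on any infinite connected graph, so the content of the conjecture is the lower bound $p_c(\alpha) > 0$ when the maximum degree $D$ of $G$ is finite. In view of Theorem \ref{theo:equivInfiniteCluster} it will suffice to prove that for $p$ small enough (depending on $D$ and $\alpha$) and every $x \in V$, the tail $\P\{r(\cmp_x) \geq w\}$ tends to zero as $w \to \infty$; by Borel--Cantelli this forces the existence of an exhausting increasing sequence of finite stable sets and hence the absence of an infinite cluster.

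The approach I would take is a multiscale analysis in the spirit of Proposition \ref{prop:CMPsubcritical}. Pick a geometrically growing sequence of scales $R_n$ and declare the good event $\kE(x,R_n)$ to assert the existence of a stable set $S$ with $B(x,R_n/5) \subset S \subset B(x,R_n)$. Then $\kE(x,R_{n+1})$ can be built from lower-scale good events by covering $B(x,R_{n+1})$ with balls of radius $R_n$, gluing the corresponding stable sets together via Propositions \ref{prop:unionstable} and \ref{prop:etouffement}, and absorbing a controlled number of "bad" balls into the whole via the dilution argument underlying Lemma \ref{lem:radbadboxes}. The polynomial volume factor $R_n^d$ of the lattice case is replaced by the ball volume $|B(x,R_n)|$ of $G$, which is at most $D^{R_n}$. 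For Bernoulli weights the moment estimate of Lemma \ref{lem:maxrad} is immediate since $r \leq 1$, and the role played by the Poisson point process in the Delaunay setting is now played by the deterministic combinatorial structure of $G$.

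The main obstacle will be the base step of the induction: one must verify $\P\{\kE(x,R_{n_0})\} \geq 1 - \varepsilon_{n_0}$ at some initial scale $R_{n_0}$, uniformly in $x$, which reduces to bounding $\P\{r(\cmp_x) \geq w\}$ by a uniform function tending to zero. The naive union bound over candidate subsets of size $w$ in $B(x,f(w))$, where $f$ is the diameter bound of Proposition \ref{prop_diam_weight}, yields only $\binom{D^{f(w)}}{w} p^w$, which blows up because $D^{f(w)}$ is superexponential in $w$. To defeat this one must exploit the rigidity of admissible clusters: every CMP cluster of weight $w$ carries a merge tree in which each internal node $C_1 \vee C_2$ satisfies $d(C_1,C_2) \leq \min(r(C_1),r(C_2))^\alpha$, and whenever one of the $C_i$ is a weight-$1$ singleton this forces adjacency in $G$. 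A recursive counting of merge trees, multiplied by the factor $p^w$ for the $w$ required weight-$1$ vertices, should produce a geometric tail estimate $\P\{r(\cmp_x) \geq w\} \leq (Cp)^w$ for a constant $C=C(D,\alpha)$. Carrying out this recursive count uniformly over all bounded-degree graphs --- plausibly via a domination by a sub-critical branching process whose offspring distribution lives on the sparse set of nearby weight-$1$ vertices rather than on the full ambient ball --- is the crux of the argument and the step I expect to be the hardest.
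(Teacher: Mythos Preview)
The statement you are attempting to prove is presented in the paper as an \emph{open conjecture}, not a theorem: the paper gives no proof, only the remark that regular trees look tractable by coupling but that ``it does not seem straightforward to extend the result to general graphs since (contrarily to the contact process for example) we cannot directly compare Bernoulli CMP on a given graph to Bernoulli CMP on a universal cover of the same graph.'' So there is nothing to compare against; the paper itself flags the general bounded-degree case as genuinely open.

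Your proposal is also not a proof, and you correctly identify the crux. Two remarks on the strategy. First, the multiscale shell you borrow from Proposition~\ref{prop:CMPsubcritical} breaks down not only at the base step but in the recursion itself: the analogue of Lemma~\ref{lem:nbbadboxes} requires covering $B(x,R_{n+1})$ by balls of radius $R_n$, and on a graph of maximum degree $D$ this needs of order $D^{R_{n+1}-R_n}$ sub-balls rather than polynomially many; the union bound $(\hbox{number of sub-balls})\cdot\varepsilon_n$ no longer closes into anything like $\varepsilon_{n+1}$, regardless of how good the base estimate is. Second, if your merge-tree counting really yielded $\P\{r(\cmp_x)\geq w\}\leq (Cp)^w$ uniformly in $x$, the multiscale would be superfluous: that estimate alone gives $\P\{|\cmp_x|=\infty\}=0$ for every $x$, hence no infinite cluster. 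So the multiscale layer adds nothing --- the entire content of the conjecture is the recursive enumeration you defer to the last paragraph, and that enumeration faces exactly the obstruction the paper singles out: clusters of the CMP are not connected subgraphs and do not behave monotonically under covering maps, so neither animal-counting nor comparison with a regular tree is available. Your branching-process heuristic for the merge tree is plausible for the very first few merges (where singletons force adjacency), but once both sides have weight $\geq 2$ the admissible distance is $\geq 2^\alpha$ and the number of candidate positions for the smaller piece is already exponential in its weight; controlling this is precisely the unresolved difficulty.
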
   
A first step to prove this assertion might be to show the result for trees with bounded degrees. By coupling, it suffices to consider regular trees, which seems to be a much easier problem than the general case. However, it does not seem straightforward to extend the result to general graphs since (contrarily to the contact process for example) we cannot directly compare Bernoulli CMP on a given graph to Bernoulli CMP on a universal cover of the same graph.  
\medskip

A similar question applies for Continuum CMP: given a graph $G$, what are the conditions on the distribution of the radii $r$ to ensure the existence of a sub-critical phase? If $G$ has exponential growth, it is clear that $r$ should, at least, admit some exponential moments. Is this sufficient, at least for trees? 
\medskip

Concerning the model of degree-weighted CMP, an important setting is that of Galton-Watson trees for which we expect:
\begin{conj}\label{conjGW} Let $\alpha \geq 1$ and let $G$ be a Galton-Watson tree with reproduction law $B$ such that $\E[\exp(c B^{\alpha})] < \infty$ for any $c >0$. Then, degree weighted CMP on $G$ with expansion exponent $\alpha$ has a non-trivial phase transition. 
\end{conj}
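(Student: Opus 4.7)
The statement combines two claims: $\Delta_c(\alpha) > 1$ (a super-critical phase) and $\Delta_c(\alpha) < \infty$ (a sub-critical phase). I restrict throughout to $\mu \defeq \E B > 1$, as otherwise the Galton--Watson tree is a.s.\ finite and the question is vacuous.

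For the super-critical half, take $\Delta = 2$. On the survival event König's lemma produces an infinite ray $o = v_0, v_1, v_2, \ldots$ from the root. For every $i \geq 1$, $v_i$ has both a parent $v_{i-1}$ and a child $v_{i+1}$ in the tree, hence graph degree at least $2$ and weight $r(v_i) \geq 2$. Since $\alpha \geq 1$, consecutive vertices of the ray satisfy $d(v_i, v_{i+1}) = 1 \leq 2^\alpha \leq \min(r(v_i), r(v_{i+1}))^\alpha$, so the merging operator collapses $\{v_1, v_2, \ldots\}$ into a single cluster. This cluster is infinite, giving $\Delta_c(\alpha) \geq 2$.

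For the sub-critical half, the plan is to transpose the multiscale strategy of Proposition \ref{prop:CMPsubcritical} to the tree. The key input is the tail estimate
\[
\P\bigl(\deg(v) \geq M\bigr) \;\leq\; K_c \, e^{-c M^\alpha}, \qquad c > 0 \text{ arbitrary},
\]
which follows from the moment hypothesis uniformly in $v$. Together with the volume bound $\E|B(v, R)| \leq C\mu^R$ and the choice $c > \log \mu$, it forces the expected number of vertices in $B(v,R)$ of degree $\geq M$ to decay as $e^{-(c M^\alpha - R \log \mu)}$, small as soon as $M^\alpha \gg R$. Using tree-scaled radii $R_n \defeq \mu^{a^n}$ for a suitable $a>1$ and the good events
\[
\kE(R_n) \defeq \bigl\{\, \exists \text{ stable set } S,\; B(o, R_n/2) \subset S \subset B(o, R_n) \,\bigr\},
\]
I would prove by induction that $\P(\kE(R_n)) \geq 1 - \varepsilon_n$ for a fast-decreasing sequence $\varepsilon_n$. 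The inductive step mirrors Lemmas \ref{lem:nbbadboxes}, \ref{lem:maxrad}, \ref{lem:radbadboxes}: count the bad descendant subtrees at the next scale---independence here being provided by the Markov branching property rather than by spatial disjointness of lattice boxes; bound the maximum weight via the tail estimate above; and glue the stable sets together via Proposition \ref{prop:etouffement}.

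The main obstacle, and the reason this conjecture is still open, is the gluing/dissipation step. In the lattice proof one uses crucially that the accumulated weight inside a box of side $R$ is at most $R^{1/\alpha}$, comfortably smaller than the polynomial diameter of the box. On a GW tree the typical accumulated weight inside $B(o, R)$ is of order $\mu^R \, \E[B\,\ind_{B \geq \Delta}]$, which for any fixed $\Delta$ is exponentially large in $R$, so a crude total-mass bound cannot close the recursion. One must instead exploit the sparsity of heavy vertices and argue that they produce many \emph{individually} small, well-separated clusters rather than one fat cluster obtained by summing all weights. Designing an inductive invariant that simultaneously certifies stability and propagates the structural information needed to rule out macroscopic merging---for instance, a bound on the maximal cluster weight inside the stable set, together with a control of the typical spacing between heavy vertices along a subtree---is the technical heart of the problem, and, in my view, precisely where the argument will be most delicate.
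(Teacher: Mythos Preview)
This statement is a \emph{conjecture} in the paper, explicitly flagged as work in progress; there is no proof to compare your proposal against.

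Your super-critical argument ($\Delta_c(\alpha) \geq 2$) is correct: on the survival event, every vertex on an infinite ray other than the root has tree-degree at least $2$, hence weight at least $2$ under the threshold $\Delta = 2$, and consecutive ray vertices merge into a single infinite cluster. Note, though, that this direction uses no moment hypothesis on $B$ whatsoever; the content of the conjecture lies entirely in the sub-critical direction $\Delta_c(\alpha) < \infty$.

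For that direction you do not give a proof, and you correctly explain why the natural approach stalls: the multiscale machinery of Section~\ref{sec:Zdiid} closes the recursion via a crude total-weight bound (Lemma~\ref{lem:maxrad}) that is polynomial in the diameter of the box, whereas on a tree of exponential growth the total weight in a ball of radius $R$ is of order $(\E B)^R$. Your diagnosis of the obstruction---that one must track the \emph{structure} and spacing of heavy vertices rather than merely their total mass---is accurate, and is precisely why the statement remains open.
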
   
In view of Theorem \ref{TheoCMPtoCP}, this conjecture, if true, implies that the contact process on Galton-Watson trees has a non trivial phase transition whenever the progeny distribution $B$ has very light tails. This is a work in progress. 
\medskip

There are also many questions regarding finer percolation properties of the CMP which might be interesting to study. For example, is there percolation at criticality? When $p\neq p_c$, what is the tail distribution of the size of finite clusters? Is the decay faster than exponential?
\medskip

Another possible direction of investigation is to consider more general definitions of the CMP. As stated at the end of Section \ref{sec:CMP}, it is possible to generalize the notion of admissibility. For example, what happens if we consider an expansion exponent smaller than $1$. Do we have $p_c(\Z) = 1$ for Bernoulli CMP whenever $\alpha < 1$? 
\bigskip

Finally, concerning the connection between the CMP and the contact process, it would be extremely  satisfying to extend Theorem \ref{TheoCMPtoCP} to $\alpha = 1$. Assuming Conjecture \ref{conjGW}, this would imply the existence of a sub-critical phase for 
the contact process on any Galton-Watson tree whose progeny distribution admits exponential moments of all orders. This is in particular the case of the Poisson distribution appearing in the limit of Erd\H{o}s-Rényi random graphs. Conversely, if degree weighted CMP always has an infinite cluster for $\alpha =1$, does this imply that the critical infection rate for the contact process is zero?

\begin{ack}A.S. would like to thank J.-B. Gouéré for stimulating discussions concerning continuum percolation and related models. 
\end{ack}

\addcontentsline{toc}{section}{References}
\bibliographystyle{abbrv}
\bibliography{CMPandContact}

\end{document}